\newcounter{uck}            
\newcommand{\newuck}{
	\refstepcounter{uck}
	\ensuremath{K_{\theuck}}
}
\newtheorem{theorem}{Theorem}
\newtheorem{lemma}[theorem]{Lemma}
\newtheorem{remark}{Remark}
\newcommand{\olduck}[1]{\ensuremath{K_{\ref{#1}}}}
\newcommand\numberthis{\addtocounter{equation}{1}\tag{\theequation}}
\newcommand{\R}{\mathbb{R}}
\newcommand{\oh}{\overline{h}}
\newcommand{\og}{\overline{g}}
\newcommand{\of}{\overline{f}}
\newcommand{\defn}{\overset{\text{def}}{=}}
\begin{document}
\title{Functional Central Limit Theorem for Two Timescale Stochastic Approximation}
\author[1]{Fathima Zarin Faizal}
\author[1]{Vivek S.\ Borkar}
\affil{Department of Electrical Engineering, IIT Bombay, Powai, Mumbai 400076, India}

\maketitle
\begin{abstract}
	Two time scale stochastic algorithms emulate singularly perturbed deterministic differential equations in a certain limiting sense, i.e., the interpolated iterates on each time scale approach certain differential equations in the large time limit when viewed on the `algorithmic time scale' defined by the corresponding step sizes viewed as time steps. Their fluctuations around these deterministic limits, after suitable scaling, can be shown to converge to a Gauss-Markov process in law for each time scale. This turns out to be a linear diffusion for the faster iterates and an ordinary differential equation for the slower iterates.
\end{abstract}
\section{Introduction} \label{sec: intro}
Stochastic approximation was introduced by \cite{robbins_munro} as a method to estimate $x^*$ satisfying the equation $h(x^*)=\theta    \ (:=$ the zero vector) under noisy measurements. Here $h(x)$ denotes the expected value at level $x$ of the response to a certain experiment and the user has access only to noisy measurements of $h(x)$ for a particular level $x$ in each experiment. Robbins and Monro provided the following scheme (which they dubbed `\textit{stochastic approximation}') for making successive experiments $x_1, x_2, \ldots$ so that under reasonable conditions, $x_n$ converges in probability to $x^*$:
\begin{align}
	x(n+1) = x(n) + a(n) \left(h(x_n)  + M_{n+1} \right). \label{eqn: single_sa_eqn}
\end{align}
The dynamical systems approach, a popular method for analyzing stochastic approximation, involves viewing the algorithm as a noisy Euler scheme of a limiting o.d.e.\ (for `\textit{ordinary differential equations}').  Here, the limiting o.d.e.\ is 
\begin{align}
	\dot{x}(t) = h(x(t)). \label{eqn: single_timescale_ode}
\end{align}
With some mild assumptions on the step sizes and the noise, it has been shown that the iterates capture the asymptotic behaviour of the limiting o.d.e.\ \cite{DerFrad}, \cite{Ljung}, \cite{Meerkov}. See  \cite{benaim1996}, \cite{Borkar2008} for a modern treatment. Over the years, this framework has found wide applications in communication networks, artificial intelligence and economic theory due to its incremental and adaptive nature. 

In this work, we consider two timescale stochastic approximation, where different parts of the iteration run at different stepsizes. This introduces the notion of different timescales in the algorithm where each timescale corresponds to a specific choice of stepsizes. Some recent applications of this algorithm include distributed optimization \cite{distributed_optimization1} and reinforcement learning \cite{borkar_meyn_SA_RL,konda_phd}. 

The classical functional central limit theorem (FCLT), also called Donsker's theorem, is an extension of the classical Lindeberg-L\'{e}vy central limit theorem to function spaces. Donsker's theorem shows the weak convergence of the laws of suitably interpolated and scaled simple symmetric random walk to the law of a Brownian motion. More generally, functional central theorems provide convergence in law of scaled interpolations of deviations of discrete processes from their deterministic or `fluid' limits, to a Gauss-Markov process. For the problem of interest here, viz., the two time scale stochastic approximation, while convergence guarantees and concentration bounds have been proved, a functional central limit theorem has not been shown. This is the gap we fill in here. 

A functional central limit theorem for the single timescale stochastic approximation can be found in \cite{Borkar2008}, Chapter 7. It shows that the distribution of the interpolated deviations of the iterates from the o.d.e.\ it tracks, when scaled by $\sqrt{a(n)}$ where $a(n)$ is the step size at time step $n$, converges to a zero mean Gauss-Markov process. A straightforward extension of this result to the two timescale case would be an analogous result for each timescale as in the single timescale case, viz., that the distribution of the interpolated deviations of the iterates from the o.d.e.\ of each timescale, when scaled by the square-root of the step size for that particular timescale, converges to a zero mean Gaussian process. In our results, we show that the correct formulation is: for each timescale, when the deviations of the iterates from the limiting o.d.e.\ are  scaled by the square-root of the step size of the \textit{faster} timescale, we achieve convergence in distribution to a zero mean Gaussian diffusion. 

The main novelty in our work is in characterizing an iterative equation for the fluctuations of the faster timescale. The other main challenge is in dealing with the additional fluctuation arising from the dependence of the iterates in the slower timescale on the iterates in the faster timescale. To control this deviation, a scaling by the square-root of the faster step size for both timescales was found to be required in our analysis and resulted in a functional central limit theorem that substantially differs from the aforementioned straightforward extension one might expect for the slower timescale (Theorem \ref{thm: main_thm} below). Finally, we also derive a central limit theorem from the FCLT proved in Theorem \ref{thm: main_thm_alpha}.

\subsection{Prior work} 
After the Robbins-Monro algorithm was introduced in \cite{robbins_munro}, similar and more powerful results have been proven for this class of iterative algorithms such as almost sure convergence, conditions for convergence, convergence rates and so on \cite{benaim1996}, \cite{Borkar2008}.

Several other limit theorems have also been proved for single timescale stochastic approximation. Rate of convergence for moments has been shown in \cite{gerencser}. An almost sure central limit theorem and a law of iterated logarithms were proved in \cite{pelletier1999} and \cite{pelletier1998} respectively. A functional central limit theorem for stochastic approximation with martingale noise can be found in Chapter 7 of \cite{Borkar2008}. Another functional central limit theorem for stochastic approximation with Markovian noise was proved in \cite{borkar_meyn_markov}.  (These references are only representative, see \cite{Duflo} for an excellent overview of the classical work on stochastic approximation.) For FCLT for single time scale, see \cite{Lai_Robbins}, \cite{solo}, \cite{benveniste_priouret_metivier},  \cite{esfahani_heunis}, \cite{borkar_meyn_markov}.

Several analogous results exist in the literature for two timescale stochastic approximation. Almost sure convergence was proved in \cite{borkar_conv_two_ts}. Concentration bounds for two timescale stochastic approximation were proved in \cite{pattathil_2018}. Sufficient conditions for the stability of two timescale stochastic approximation were proved in \cite{lakshminarayanan_stability}. A central limit theorem for two timescale stochastic approximation when the iterations are linear was proved in \cite{konda_linear}, \cite{Basak}. A central limit theorem for suitably scaled iterates of two timescale stochastic approximation was proved in \cite{mokkadem_pelletier}. See \cite{jordan_fclt} for a recent work on FCLT for Q-learning with Polyak-Ruppert averaging.

\subsection{Problem description}
We consider the two timescale stochastic approximation scheme involving martingale and Markov noise. Precisely, we consider the following iterative equations for $\{x_k\}$, $\{y_k\}$ where $x_k \in \mathbb{R}^{d_1},  y_k \in \mathbb{R}^{d_2}$, $h: \mathbb{R}^{d_1}\times\mathbb{R}^{d_2} \times S \rightarrow \mathbb{R}^{d_1},g : \mathbb{R}^{d_1}\times\mathbb{R}^{d_2} \times S\rightarrow \mathbb{R}^{d_2}$ and $k \geq 0$: 
\begin{align}
	x_{k+1} & =x_k + a(k) \left ( h(x_k, y_k,Y_k) + M_{k+1}^{f} \right ), \label{eqn: iteration1}\\
	y_{k+1} & =y_k + b(k) \left ( g(x_k, y_k,Y_k) + M_{k+1}^{s} \right ), \label{eqn: iteration2}
\end{align}
such that
\begin{align}
	\sum_na(n)=\sum_nb(n)=\infty, \ \sum_n(a(n)^2+b(n)^2)<\infty, \ 	\frac{b(n)}{a(n)} \rightarrow 0. \label{eqn: bk_by_ak}
\end{align}
Here, $\{Y_k\}$ is the so called Markov noise taking values in a finite space $S$ and controlled by $\{x_n, y_n \}$ (see Assumption \ref{assumption: markov_conditional_prob} for the definition). Let $\pi_{x, y}$ be the unique stationary distribution of an irreducible Markov chain on $S$ with  transition probability $p_{x,y}( \cdot | \cdot )$ that governs the evolution of $\{Y_n\}$ as described in Assumption \ref{assumption: markov_conditional_prob}. 

The last condition in \eqref{eqn: bk_by_ak} essentially puts $\{x_k\}$ on a much faster timescale compared to $\{y_k\}$. This also implies that there are essentially three timescales at play here. The process $\{Y_n\}$ runs on the natural timescale, i.e., the clock indexed by $n = 0, 1, 2, \cdots$. On the other hand, both iterates above run on slower timescales defined by time increments $a(n), b(n), n \geq 0,$ resp. 

The equations \eqref{eqn: iteration1} and \eqref{eqn: iteration2} can be thought of as a noisy Euler scheme for the coupled (`singularly perturbed') o.d.e.s 
\begin{align*}
	\dot{x}(t) &= \frac{1}{\epsilon} \sum_i\pi_{x(t),y(t)}(i)h(x(t),y(t),i) = \frac{1}{\epsilon} \overline{h}(x(t),y(t)), \\
	\dot{y}(t) &= \sum_i\pi_{x(t), y(t)}(i)g(x(t),y(t),i) = \overline{g}(x(t),y(t)),
\end{align*}
where $0 < \epsilon \downarrow 0$. Note that the averaging over $\{Y_n\}$ which moves on a faster time scale can be seen here in the averaging of the right hand side w.r.t.\ its stationary distribution parametrized by $(x,y)=(x(t),y(t))$ at time $t$. Assume that for a fixed $y$, the o.d.e.
\begin{align}
	\dot{x}(t) &= \frac{1}{\epsilon} \sum_i\pi_{x(t),y}(i)h(x(t),y,i), \label{eqn: first_timescale_ode}
\end{align}
has a globally asymptotically stable equilibrium $\lambda(y)$ for a Lipschitz $\lambda(\cdot)$,  and the o.d.e.
\begin{align}
	\dot{y}(t) &= \sum_i\pi_{\lambda(y(t)), y(t)}(i)g(\lambda(y(t)),y(t),i) \label{eqn: second_timescale_ode}
\end{align}
has a globally asymptotically stable equilibrium $y^{*}$. Since $\epsilon$ does not affect the trajectories of $x(\cdot)$ and affects only the speed with which they are traversed, $\lambda(\cdot)$ is independent of $\epsilon$.  If $\epsilon$ is sufficiently small, then $y(\cdot)$ can be considered to be quasi-static relative to $x(\cdot)$. Thus $x(t)$ essentially tracks $\lambda(y(t))$ from \eqref{eqn: first_timescale_ode} and $y(t)$ approximately tracks the solution of the o.d.e.\ \eqref{eqn: second_timescale_ode}. We thus expect $(x(t),y(t))$ to approximately converge to $(\lambda(y^{*}), y^{*})$ and by extension, $(x_n, y_n)$ to converge almost surely to $(\lambda(y^*), y^*)$. This is indeed so, refer to \cite{Borkar2008}, Chapter 8, for details. Note, however, that throughout the rest of the paper, we do not assume that \eqref{eqn: second_timescale_ode} has a globally asymptotically stable equilibrium point. 

Our aim is to prove a functional central limit theorem for both iterates along the lines of the functional central limit theorem for single timescale stochastic approximation, which will be discussed in more detail in section \ref{sec: main_results}. 

\subsection{Technical assumptions}
We make the following assumptions to prove Theorem \ref{thm: main_thm}:
\begin{enumerate}[label=(A\arabic*)]
	\item $\{a(k)\}, \{b(k)\} \subset (0,1)$, and
	\begin{align*}
		\sum_{k}a(k) = \sum_{k} b(k) &= \infty, \\
		\sum_{k} \left ( a(k)^2 + b(k)^2 \right ) &< \infty, \\
		\frac{b(k)}{a(k)^{3/2}} &\overset{n \uparrow \infty}{\rightarrow} 0. 
	\end{align*}
	Also, $0 < b(k) \leq a(k)^{3/2} < a(k) < 1 \, \forall \, k$.
	
	\item \label{assumption: alpha_beta} $\varphi := \lim_{n\uparrow\infty}(\frac{1}{a(n+1)} - \frac{1}{a(n)})$ exists and equals 0. The case when $\varphi \neq 0$ has been handled separately in Section \ref{sec: alpha}. An example of such a sequence would be $a(n) = \frac{C}{(n+1)^c}, c \in \left(\frac{1}{2},1\right)$. As mentioned in \cite{Borkar2008}, Chapter 7, some consequences of this assumption are:
	\begin{align*}
		\underset{n \uparrow \infty}{\lim} \ \frac{a(n)}{a(n+1)} &= 1, \\
		\underset{n \uparrow \infty}{\lim} \	\frac{\left( a(n) - a(n+1)\right)^2}{a(n) a(n+1)^2} &= 0.
	\end{align*}
	We also assume that $\vartheta := \lim_{n\uparrow\infty}(\frac{1}{b(n+1)} - \frac{1}{b(n)})$ exists.
	\item \label{assumption: bn_omega} \begin{align*}
		\underset{n \uparrow \infty}{\lim} \ \frac{a(n)}{b(n)} \left( \frac{1}{a(n+1)} - \frac{1}{a(n)}\right) = 0.
	\end{align*}
	\item \label{assumption: an_decreasing} The sequence $a(n)$ decreases monotonically to 0.
	\item \label{assumption: LemmaA.28} For any $n \in \mathbb{N}$ and $T>0$, let $m^{f-}(n) = \max \left\{m \leq n : t^f(m) \leq t^f(n) - T \right\}$, where $t^f(n) = \sum_{i=1}^{n-1} a(i)$. Then for any $n$,
	\begin{align*}
		\underset{T \uparrow \infty}{\lim} \frac{a(m^{f-}(n,T))}{a(n) e^T} < \zeta(T) \rightarrow 0,
	\end{align*}
	where $\zeta(T)$ is independent of $n$ and is a decreasing function of $T$. In the above, we added a second argument to $m^{f-}(\cdot)$ to highlight its dependence on $T$.
	\item\label{assumption: bdd_fourth_moment} $\sup_n\|x_n\| < \infty$  a.s., \
	$\sup_nE\left[\|x_n\|^4\right] < \infty$. Also, $\sup_n\|y_n\| < \infty$  a.s., \
	$\sup_nE\left[\|y_n\|^4\right] < \infty$. Such stability assumptions are common in the stochastic approximation literature to prove convergence and limit theorems. One can consult Chapter 4 of \cite{Borkar2008} for examples of stability criteria. 
	
	\item $\{M_{k+1}^{i}\}, i \in \{f,s\}$ are martingale difference sequences with respect to the increasing $\sigma$-fields
	\begin{align*}
		\mathcal{F}_k := \sigma(x_m, y_m, Y_m, M_m^{i}; i \in \{f,s\}; m \leq n), n \geq 0.
	\end{align*}
	That is, $\mathbb{E} \left[ M^{i}_{n+1} | \mathcal{F}_n \right] = 0$ a.s. for $ i \in \{f,s\}$ and $n \geq 0$.
	
	\item\label{assumption: martingale} $\{M_{k+1}^{i}\},  i \in \{f,s\}$ also satisfy:
	\begin{eqnarray*}
		E\left[M_{n+1}^{f}{M_{n+1}^{f}}^{\mathrm{T}} | M_k^{f}, M_k^{s}, x_k, y_k, Y_k, k \leq n\right] &=& Q_f(x_n, y_n), \\
		E\left[M_{n+1}^{s}{M_{n+1}^{s}}^{\mathrm{T}} | M_k^{s},M_k^{f}, x_k, y_k, Y_k, k \leq n\right] &=& Q_s(x_n, y_n), \\
		E\left[\|M_{n+1}^{f}\|^4 | M_k^{f}, M_k^s, x_k, y_k, Y_k, k \leq n\right] &\leq& K'\left(1 + \|x_n\|^4 + \|y_n\|^4 \right), \\
		E\left[\|M_{n+1}^{s}\|^4 | M_k^{f}, M_K^s, x_k, y_k, Y_k, k \leq n\right] &\leq& K'\left(1 + \|x_n\|^4 + \|y_n\|^4 \right),
	\end{eqnarray*}
	where $K' > 0$ is a suitable constant and $Q_f': \R^{d_1}  \times \R^{d_2} \rightarrow \R^{d_1\times
		d_1}$, $Q_s: \R^{d_1} \times \R^{d_2} \rightarrow \R^{d_2\times
		d_2}$ are positive definite matrix-valued Lipschitz functions such
	that the least eigenvalues of $Q_i(\cdot, \cdot), i \in \{f,s\}$ are bounded away from zero
	uniformly in its arguments. In particular, the last assumption combined with \ref{assumption: bdd_fourth_moment} gives you that the fourth moments of the martingale noise are bounded. 
	\item \label{assumption: hg} The functions $h : \R^{d_1} \times \R^{d_2} \times S \rightarrow \R^{d_1}$ and $g :R^{d_1} \times \R^{d_2} \times S \rightarrow \R^{d_2}$ are uniformly Lipschitz in each of their first two arguments for each value of $Y_n$, i.e.,  for some $L >0$ and $\forall \ i \in S,x,x' \in \R^{d_1}, y,y'\in\R^{d_2},$ 
	\begin{align*}
		\| h(x,y,i) - h(x', y', i) \| &\leq L(\|x-x'\| + \|y-y'\|),  \\
		\| g(x,y,i) - g(x', y', i) \| &\leq L(\|x-x'\| + \|y-y'\|).
	\end{align*}
	We also assume that $\oh : \R^{d_1} \times \R^{d_1} \rightarrow \R^{d_1}$ and $\og : \R^{d_2} \times \R^{d_2} \rightarrow \R^{d_2}$ are continuously differentiable and their Jacobian matrices $\nabla_x\oh,\nabla_y\oh, \nabla_x\og,\nabla_y\og$ respectively are uniformly bounded and uniformly Lipschitz continuous. Moreover, the functions $h(\cdot, \cdot,\cdot)$ and $g(\cdot, \cdot,\cdot)$ are uniformly bounded. 
	\item \label{assumption: fast_ode_convergence} The o.d.e. \eqref{eqn: first_timescale_ode} has a globally asymptotically stable equilibrium $\lambda(y)$ where $\lambda(\cdot)$ is continuously differentiable with bounded derivatives.
	\item \label{assumption: fast_ode_convergence_martingale} For any $y \in \R^{d_1}$, the solution of the o.d.e.\ \eqref{eqn: first_timescale_ode} satisfies $\left \| x(t) - \lambda(y) \right \| \leq c e^{-dt} \| x(0) - \lambda(y) \|$, for some $c,d>0$ (i.e., the equilibrium $\lambda(y)$ is exponentially stable). 
	\item\label{assumption: markov_conditional_prob} $\{Y_n\}_{n \geq 0}$ is such that for $i \in S$ and $n \geq 0$, 
	\begin{align*}
		\mathbb{P}(Y_{n+1} =i | Y_m, x_m, y_m, m \leq n) &= \mathbb{P}(Y_{n+1} =i | Y_n, x_n, y_n), \\
		&= p_{x_n, y_n} (i|Y_n),
	\end{align*}
	where for each fixed $x,y$, $p_{x,y}(\cdot | \cdot)$ is the transition probability of an irreducible Markov chain on the finite state space $S$ with the unique stationary distribution $\pi_{x,y}$. Furthermore, the maps $(x,y) \mapsto p_{x,y}(\cdot|\cdot)$ are continuously differentiable with bounded derivatives. 
	\item\label{assumption: martingale_markov_cond_independence} The Markov noise $Y_n$ and the martingale noises $\{M^i_{n+1}, i = f,s\},$ are conditionally independent of each other and the past, i.e., $\{x_0,y_0, Y_k,M^i_{k+1}, i = 1,2; k \geq 0\}$, given the current iterates $(x_n,y_n)$.
	Furthermore, the conditional covariance matrices of the martingale noises given the current iterates are positive definite and Lipschitz.
	
\end{enumerate}
\begin{remark}
	An example of such $\{a(n)\}$ and $\{b(n)\}$ that satisfy (A1.1)-(A1.4) is 
	\begin{align*}
		a(n) &= \frac{C_1}{(n+1)^\rho}, \\
		b(n) &= \frac{C_2}{(n+1)^{3\rho/2 + \epsilon}},
	\end{align*}
	where $\rho \in \left(\frac{1}{2},\frac{2}{3}\right)$ and $\epsilon\in (0,1 - 3\rho/2)$. Thus,
	\begin{align*}
		\frac{a(n)}{b(n)} \left( \frac{1}{a(n+1)} - \frac{1}{a(n)}\right) &= \frac{(n+1)^{\epsilon+3\rho/2 } }{C_2} \left(  \left(1+\frac{1}{n+1}\right)^{\rho}-1 \right) \overset{n \uparrow \infty}{\rightarrow} 0.
	\end{align*}
\end{remark}
\begin{remark}
	For the example given in Remark 1, it follows that for any $n$,
	\begin{align*}
		\frac{a(m^{f-}(n))}{a(n)} \leq b (1+T)^{\rho/(1-\rho)},
	\end{align*}
	where $b$ is a constant independent of $n$ and $T$ (Lemma A.28 of \cite{borkar_meyn_SA_RL}). It follows that \ref{assumption: LemmaA.28} is also satisfied. 
\end{remark}
\subsection{Notation} \label{sec: notation}
In this section, we define the various terms required to state our main result. We first define the terms required to state the functional central limit theorem for single timescale stochastic approximation shown in Chapter 7 of \cite{Borkar2008}. We begin with some background. An o.d.e.\ is said to be well-posed if there exists a unique solution for every initial condition that is continuous with respect to the latter. Recall that an invariant set $A$ for a well-posed o.d.e.\ is said to be internally chain transitive if given any $x,y\in A$ and $\epsilon, T > 0$, we can find an $n\geq 2$ and points $x_0=x,  x_1, \cdots x_{n-1}, x_n = y,$ such that there exist  trejectories $\breve{x}_i(\cdot)$ of the  o.d.e.\ for $0\leq i < n,$ satisfying: $\breve{x}_i(0)$ is in the $\epsilon$-neghborhood of $x_i$ and  $\breve{x}_i(t)$ is in the $\epsilon$-neighborhood of $x_{i+1}$ for some $t \geq T$.

The broad idea behind the o.d.e.\ approach is to construct an interpolated trajectory on the algorithmic timescale from the iterates and show that it asymptotically approaches the solution set of the limiting o.d.e. The following theorem, which builds upon a classic result of  Benaim \cite{benaim1996} in the single time scale case, characterizes the asymptotic behaviour of $\{x_n\}, \{y_n\}$:
\begin{theorem}[\cite{Borkar2008}, Theorem 8.1]  \label{thm: as_limit}
	Almost surely, the sequences $\{x_n\}, \{y_n\}$ generated by \eqref{eqn: iteration1}, \eqref{eqn: iteration2} respectively converge to $\{(\lambda(y), y) : y \in \chi \}$, where $\chi$ is a (possibly sample path dependent) connected internally chain transitive invariant set of \eqref{eqn: second_timescale_ode}. 
\end{theorem}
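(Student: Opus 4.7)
The plan is to adapt the classical two-timescale argument of \cite{Borkar2008} (Ch.\ 8) to the present controlled-Markov-noise setting. The conceptual engine is the timescale separation encoded in $b(n)/a(n)\to 0$: from the viewpoint of the fast iterate $\{x_k\}$, the slow iterate $\{y_k\}$ looks quasi-static, while from the viewpoint of $\{y_k\}$, the fast iterate has had time to equilibrate to the parameterized attractor $\lambda(y_k)$. The proof thus splits into (i) the ``fast tracks the slow'' lemma $\|x_n-\lambda(y_n)\|\to 0$ a.s., and (ii) the application of a single-timescale Benaim-type theorem to the resulting slow iteration.

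For (i), I first average out the Markov noise. By \ref{assumption: markov_conditional_prob} and finiteness of $S$, for each $(x,y)$ the Poisson equation
\begin{align*}
    v_{x,y}(i) - \sum_j p_{x,y}(j\mid i)\,v_{x,y}(j) \;=\; h(x,y,i) - \oh(x,y)
\end{align*}
has a unique centred solution depending smoothly on $(x,y)$; substituting and telescoping rewrites the $x$-update as a noisy Euler scheme for the family of o.d.e.s $\dot x=\oh(x,y)$ (with $y$ a parameter), plus a perturbation $\varepsilon^f_k\to 0$ a.s.\ controlled by \ref{assumption: bdd_fourth_moment} and \ref{assumption: hg}. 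Since $\|y_{k+1}-y_k\|=O(b(k))=o(a(k))$, over any window of length $T$ on the fast algorithmic clock $t^f(n)=\sum_{i<n}a(i)$ the slow iterate drifts by $o(1)$. A Gronwall comparison between the interpolated fast iterate and the trajectory of $\dot x=\oh(x,y_n)$ with $y_n$ frozen, combined with the global asymptotic stability of $\lambda(y)$ (\ref{assumption: fast_ode_convergence}) and the Lipschitz dependence of $\lambda$ on $y$, then yields $\|x_n-\lambda(y_n)\|\to 0$ a.s. The exponential stability \ref{assumption: fast_ode_convergence_martingale} and the tail-control \ref{assumption: LemmaA.28} supply the uniformity needed across successive windows.

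For (ii), substitute $x_k=\lambda(y_k)+o(1)$ into the $y$-update and again average the Markov noise via the Poisson equation for $p_{\lambda(y),y}$; this yields
\begin{align*}
    y_{k+1} \;=\; y_k + b(k)\bigl(\og(\lambda(y_k),y_k) + M^s_{k+1} + \eta_k\bigr),
\end{align*}
with $\eta_k\to 0$ a.s.\ (using Lipschitz continuity of $g,\lambda$, step (i), and boundedness of $g$ from \ref{assumption: hg}). This is a single-timescale stochastic approximation with martingale noise and a vanishing disturbance, to which Benaim's theorem \cite{benaim1996} applies, giving convergence of $\{y_n\}$ a.s.\ to a (possibly sample-path-dependent) compact connected internally chain-transitive invariant set $\chi$ of \eqref{eqn: second_timescale_ode}. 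Combined with (i), $(x_n,y_n)\to\{(\lambda(y),y):y\in\chi\}$ a.s.

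The principal obstacle is step (i): one cannot freeze $y$ over an infinite horizon, yet the attractivity of $\lambda(y)$ is only given for fixed $y$. The standard workaround is a careful choice of window lengths $T_k\uparrow\infty$ on the fast clock such that (a) $y$ drifts by $o(1)$ across each window and (b) the frozen-$y$ flow contracts sufficiently toward $\lambda(y)$; these two competing requirements are reconciled using precisely \ref{assumption: LemmaA.28}. The remaining ingredients---almost sure convergence of $\sum a(k)M^f_{k+1}$ and $\sum b(k)M^s_{k+1}$ via the martingale convergence theorem under the fourth-moment bounds of \ref{assumption: martingale}, and the asymptotic identification of limit sets by Benaim's method---are by now textbook.
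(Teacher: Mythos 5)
The paper does not prove Theorem \ref{thm: as_limit}; it quotes it from \cite{Borkar2008}, Theorem 8.1, relying on the Poisson-equation decomposition of the Markov noise set up in Section 2.1.1. Your sketch reconstructs the classical two-timescale argument (fast-tracks-slow via a Gronwall comparison over growing windows plus attractivity of $\lambda(\cdot)$, then a single-timescale Benaim theorem for the slow iterate), which is exactly the route the cited reference takes, adapted to controlled Markov noise.

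One point of precision worth fixing. After the Poisson-equation decomposition, the Markov-noise contribution to each $y$-update splits into a martingale difference $\zeta^s_k$, a telescoping piece $\tau^s_k - \tau^s_{k+1}$, and a residual $e^{s,\Delta}_k=O(b(k))$. It is not true that the aggregate disturbance $\eta_k \to 0$ a.s.\ termwise, nor that $\varepsilon^f_k\to 0$ a.s.\ in step (i): the telescoping piece does not vanish pointwise, and the martingale part certainly does not. What the Benaim/Borkar framework actually requires---and what the decomposition delivers---is asymptotic negligibility of the step-weighted partial sums $\sum_{k=n}^{m} b(k)\eta_k$ uniformly over $n\leq m\leq m^s(n)$ as $n\uparrow\infty$ (and similarly for the fast scale with weights $a(k)$). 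This follows from martingale convergence under \ref{assumption: martingale}, Abel summation for the telescoping terms, and the $O(b(k))$ bound on the residual. The hypothesis of Benaim's theorem is satisfied and your proof goes through; the statement as written just overclaims pointwise smallness of the perturbation where only cumulative smallness is available or needed.
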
 

Consider the iterative equation in $\R^d$ given by \eqref{eqn: single_sa_eqn}. Define the algorithmic timescale  
\begin{align*}
	t(n) = \sum_{m=0}^{n-1} a(m), \ n \geq 1; \ t(0)=0. 
\end{align*}
Note that this timescale runs slower than the normal timescale as $1>a(n)> 0, a(n)\to 0$. Fix a value of $T>0$. Let $m(n):=\min \{m \geq n : t(m) \geq t(n) + T\}, n \geq 1$. We also define $x^n(t), t \geq t(n)$, to be the unique solution of \eqref{eqn: single_timescale_ode} starting at $t(n)$: 
\begin{align*}
	\dot{x}^n(t) = h(x^n(t)), t \geq t(n); \ x^n(t(n)) = x_n. 
\end{align*}
In \cite{Borkar2008}, the functional central limit theorem for the single timescale case is stated for linear interpolations of the scaled fluctuations of the iterates from the limiting o.d.e. For a particular $n \in \mathbb{N}$, the scaled fluctuations are defined as: for  $j \geq n$,
\begin{align*}
	z_j^n := \frac{x_j - x^n(t(j))}{\sqrt{a(j)}}. 
\end{align*}
These are then linearly interpolated over the algorithmic timescale to form the piecewise linear function $z^n(t), t \in [t(n), t(n)+T]$. Specifically, $z^n(t(j))=z_j^n$ and $z^n(t)$ is a linear interpolation of $z_j^n$ and $z_{j+1}^n$ for $t \in [t(j), t(j+1)]$ and $n \leq j < m(n)$. For ease of notation, we define $\tilde{z}^n(t) = z^n(t(n)+t)$. 

Analogously, to study the functional central limit theorem for the two timescale case, we define the two `algorithmic timescales'
\begin{align*}
	t^f(n) &= \sum_{m=0}^{n-1} a(m), \ n \geq 1; \ t^f(0)=0,\\
	t^s(n) &= \sum_{m=0}^{n-1} b(m), \ n \geq 1; \ t^s(0)=0,
\end{align*}
with
\begin{align*}
	m^f(n) &:=\min \{m \geq n : t^f(m) \geq t^f(n) + T\}, n \geq 1, \\
	m^s(n) &:=\min \{m \geq n : t^s(m) \geq t^s(n) + T\}, n \geq 1.
\end{align*}
Clearly, $t^i(m^i(n)) \in \left[t^i(n) + T, t^i(n) + T + 1\right], \ i \in \{f,s\}$. Define $\overline{f}(y)=\overline{g}(\lambda(y),y)$. Let $y^n(t)$ be the unique solution of the differential equation
\begin{align*}
	\dot{y}^n(t) = \overline{f}(y^n(t)), t \geq t^s(n); \ y^n(t^s(n)) = y_n. 
\end{align*}
The fluctuations that we consider for the functional central limit theorem are the following. For $n \geq 1$, let 
\begin{align*}
	\beta_n &=\lambda(y_n) - x_n= v_n -x_n,\\
	u_j &= \frac{\beta_j}{\sqrt{a(j)}}, \\
	\alpha_j^n &= y_j - y^n(t^s(j)), \\
	w_j^n &= \frac{\alpha_j^n}{\sqrt{a(j)}}.
\end{align*}
We then construct linear interpolations of these fluctuations over an interval of length $\approx T$ as before. The piecewise linear function $u^n(t), t \in [t^f(n), t^f(n)+T]$ is such that $u^n(t^f(j)) = u_j$ and is linearly interpolated on the intervals $[t^f(j), t^f(j+1)], n \leq j \leq m^f(n)$. Similarly, the piecewise linear function $w^n(t), t \in [t^s(n), t^s(n)+T]$ is such that $w^n(t^s(j)) = w_j$ and is linearly interpolated on the intervals $[t^s(j), t^s(j+1)], n \leq j \leq m^s(n)$. For ease of notation, we also define $\tilde{u}^n(t)=u^n(t^f(n)+t), t \in [0,T]$, $\tilde{y}^n(t)=y^n(t^s(n)+t)$ and $\tilde{w}^n(t) = w^n(t^s(n)+t), t \in [0,T]$. Also denote by $\overline{y}^n(t), t \in [0,T]$ the linearly interpolated version of $\{y_n\}$ on the slow algorithmic timescale, i.e., for $n \leq j \leq m^s(n)$, $\overline{y}^n(t^s(j)-t^s(n)) = y_j$ and linearly interpolated otherwise. 

\subsection{Main results} \label{sec: main_results}
We first reproduce the functional central limit theorem for single timescale stochastic approximation from \cite{Borkar2008} and \cite{borkar_meyn_markov} for comparison. 
\begin{theorem}[Theorem 7.1, \cite{Borkar2008}] \label{thm: fclt_single}
	Consider the single timescale stochastic approximation iteration as in \eqref{eqn: single_sa_eqn}. The limit points in law $(z^*(\cdot), x^*(\cdot))$ of $\{\tilde{z}^n(\cdot), \tilde{x}^n(\cdot)\}$ are such that $x^*(\cdot)$ is a solution of the o.d.e.\ $\dot{x}(t)=h(x(t))$ belonging to an internally chain transitive invariant set thereof, and $z^*(\cdot)$ satisfies 
	\begin{align*}
		z^*(t) = \int_{0}^{t} \left( \nabla h(x^*(s)) + \frac{\varphi}{2} I \right) z^*(s) ds + \int_{0}^{t} G(x^*(s)) dB(s),
	\end{align*}
	where $G(x) \in \R^{d \times d}$ for $x \in \R^d$ is a positive semidefinite, Lipschitz in $x$, square-root of the matrix $Q(x)$\footnote{Such a $G$ can always be found, see \cite{SV}, Section 5.3.}. Here, $Q(x)$ satisfies
	\begin{align*}
		E \left[M_{n+1} M_{n+1}^{\intercal} | M_i, x_i, i \leq n \right] = Q(x_n). 
	\end{align*} 
\end{theorem}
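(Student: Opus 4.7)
The plan is to derive a discrete-time recursion for the scaled fluctuations $z_j^n$, show tightness of the sequence of interpolated processes $\tilde z^n(\cdot)$ in $C([0,T];\R^d)$, and identify any subsequential limit via the associated martingale problem for the claimed linear SDE. Joint convergence of $(\tilde z^n,\tilde x^n)$ reduces to these two pieces because $\tilde x^n(\cdot)$ is handled by the standard ODE approach (its limit points lie in an internally chain transitive invariant set by Theorem \ref{thm: as_limit}, specialized to one time scale).

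First I would write $\beta_{j+1}^n := x_{j+1} - x^n(t(j+1))$ and Taylor expand both pieces: substituting the iteration and expanding $x^n(t(j+1))=x^n(t(j))+a(j)h(x^n(t(j)))+O(a(j)^2)$, then using a first-order expansion of $h$ about $x^n(t(j))$, one obtains
\begin{align*}
\beta_{j+1}^n = \beta_j^n + a(j)\,\nabla h(x^n(t(j)))\,\beta_j^n + a(j) M_{j+1} + r_j^n,
\end{align*}
where $r_j^n$ collects the second-order Taylor remainder in $h$ (quadratic in $\beta_j^n$) and the $O(a(j)^2)$ ODE remainder. Dividing by $\sqrt{a(j+1)}$ and using assumption \ref{assumption: alpha_beta} to write $\sqrt{a(j)/a(j+1)}=1+\tfrac{\varphi}{2}a(j)+o(a(j))$, this becomes
\begin{align*}
z_{j+1}^n = z_j^n + a(j)\left(\nabla h(x^n(t(j)))+\tfrac{\varphi}{2}I\right) z_j^n + \sqrt{a(j)}\,M_{j+1}\,(1+o(1)) + \tilde r_j^n,
\end{align*}
with $\tilde r_j^n=o(a(j))$ in an appropriate sense under \ref{assumption: bdd_fourth_moment}--\ref{assumption: martingale}. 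This is precisely the Euler scheme, on the algorithmic timescale, for the SDE appearing in the statement.

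Next I would establish tightness of $\{\tilde z^n\}$ in $C([0,T];\R^d)$. Iterating the recursion gives a uniform $L^2$ bound on $\sup_j \|z_j^n\|$ on the finite horizon $[t(n),t(n)+T]$ (Gronwall on the deterministic part and the orthogonality of martingale increments, using \ref{assumption: martingale} and boundedness of $\nabla h$). For the increment moduli, the drift piece is $O(\Delta t)$ while the martingale piece satisfies, via Burkholder–Davis–Gundy and $\sum_{j\in J}a(j)\leq \Delta t+o(1)$ for a contiguous block $J$, a bound of the form $E\|\tilde z^n(s)-\tilde z^n(t)\|^4\lesssim |s-t|^2$. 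Kolmogorov's criterion then yields tightness.

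Finally I would identify the limit. Let $(z^*,x^*)$ be any subsequential limit. Since $h$ is bounded and $a(n)\to 0$, the interpolated $\tilde x^n$ converges to a solution of $\dot x=h(x)$ belonging to an internally chain transitive invariant set (this is Benaim's theorem in the one–time–scale case). Summing the recursion and passing to the limit, the drift piece $\sum a(j)(\nabla h(x^n(t(j)))+\tfrac{\varphi}{2}I)z_j^n$ converges to $\int_0^t(\nabla h(x^*(s))+\tfrac{\varphi}{2}I)z^*(s)\,ds$ by continuous mapping (using Lipschitz continuity of $\nabla h$ and joint convergence). For the martingale piece $N_k^n:=\sum_{j=n}^{k-1}\sqrt{a(j)}\,M_{j+1}$, a martingale functional CLT applies: the conditional quadratic variation is $\sum_{j=n}^{k-1}a(j)Q(x_j)$, which converges to $\int_0^tQ(x^*(s))\,ds=\int_0^tG(x^*(s))G(x^*(s))^{\mathrm{T}}ds$, and the Lindeberg/uniform integrability condition on the increments follows from the fourth-moment bound in \ref{assumption: martingale} together with $\sqrt{a(j)}\to 0$. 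Hence $N^n\Rightarrow \int_0^{\cdot}G(x^*(s))\,dB(s)$ for a standard Brownian motion $B$ (realized on an enlarged space if needed), and $z^*$ solves the asserted linear SDE.

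The main obstacle is the martingale CLT step for a possibly non-stationary, $x_j$-dependent covariance: one must simultaneously control the convergence of the predictable quadratic variation $\sum a(j)Q(x_j)$ along the same subsequence on which $\tilde x^n$ converges, and verify a conditional Lindeberg condition uniformly in $n$. Both rely crucially on \ref{assumption: bdd_fourth_moment}, the fourth-moment bound in \ref{assumption: martingale}, and the Lipschitz continuity of $Q$ (and hence of $G$).
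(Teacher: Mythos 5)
The paper does not prove this theorem: it simply reproduces Theorem 7.1 of Borkar (2008) (and its Markov-noise analogue) as a point of comparison for the two-timescale result. Your sketch is nevertheless correct at the level of detail presented, and it follows exactly the proof template from that reference, which is also the template the paper re-uses for its own Theorem \ref{thm: main_thm}: (i) derive a linearized recursion for $z_j^n$ in which $\sqrt{a(j)/a(j+1)}=1+\tfrac{\varphi}{2}a(j)+o(a(j))$ produces the drift correction $\tfrac{\varphi}{2}I$; (ii) obtain a Gronwall-type uniform moment bound on $\sup_j\|z_j^n\|$ and then a Burkholder-based increment bound $E\|\tilde z^n(t)-\tilde z^n(s)\|^4\lesssim|t-s|^2$ feeding the Kolmogorov--Prokhorov tightness criterion (the paper's Lemmas \ref{lem: Lemma7.4} and \ref{lem: lemma7.1}); (iii) pass to a convergent (Skorohod-realized) subsequence and characterize the limit via the martingale problem, matching quadratic variation with $\int Q(x^*)ds$ and invoking a martingale representation theorem to manufacture the Brownian motion. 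One caution worth flagging in your write-up: you cannot literally apply a martingale FCLT to the standalone sum $N^n_k=\sum\sqrt{a(j)}M_{j+1}$ and then add the drift, because the recursion couples $z_j^n$ into the drift; the cleaner route (and the one the reference and this paper use) is to show directly that $\tilde z^n(t)-\int_0^t(\nabla h(\tilde x^n)+\tfrac{\varphi}{2}I)\tilde z^n\,ds$ is asymptotically a martingale with the asserted quadratic variation, which you do gesture at in your closing paragraph. Also, the Taylor remainder $r_j^n$ is $O(a(j)^2\|z_j^n\|^2)$, so absorbing $r_j^n/\sqrt{a(j+1)}$ into $\tilde r_j^n=o(a(j))$ requires the fourth-moment bound on $z_j^n$ to be in hand first; this ordering should be made explicit, as it is in the paper's Lemma \ref{lem: first_timescale_with_ode_fclt} and Section \ref{sec: un_bound}.
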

Thus $z^*(\cdot)$ is the solution of a linear stochastic differential equation and is a zero mean Gaussian process. Our main result is summarized in the following theorem (notation defined in Section \ref{sec: notation}).
\begin{theorem} \label{thm: main_thm}
	The limit points in law $(u^*(\cdot),w^*(\cdot), y^*(\cdot))$ of $\{\tilde{u}^n(\cdot), \tilde{w}^n(\cdot), \tilde{y}^n(\cdot) \}$ are such that $u^*(\cdot)$ satisfies
	\begin{align}
		u^*(t)  = \int_{0}^{t} \left ( \nabla_x \oh(\lambda(y^*(s)),y^*(s)) \right )u^*(s) ds + \int_{0}^{t} G(\lambda(y^*(s)),y^*(s)) dB(s), \label{eqn: fast_sde}
	\end{align}
	where $G(\cdot,\cdot)$ is a positive semidefinite, Lipschitz matrix, and $w^*(\cdot)$ satisfies
	\begin{align}
		w^*(t)  = \int_{0}^{t} \left( \nabla \of(y^*(s))w^*(s) +\nabla_x \og(\lambda(y^*(s)),y^*(s)) u^*(s)  \right) ds. \label{eqn: slow_ode}
	\end{align}
	and $y^*(\cdot)$ is a trajectory belonging to an internally chain transitive invariant set of the o.d.e.\ $\dot{y}(t)=\of(y(t))$.
\end{theorem}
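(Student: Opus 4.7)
My strategy is to (i) establish tightness of the laws of $\{(\tilde u^n,\tilde w^n,\tilde y^n)\}_{n\ge 1}$ in $C([0,T];\R^{d_1+2d_2})$ and (ii) identify the drift and diffusion structure of every weak subsequential limit. Tightness will follow from uniform fourth-moment control (furnished by \ref{assumption: bdd_fourth_moment}, \ref{assumption: martingale}) combined with oscillation bounds that fall out of the one-step recursions derived below. The $\tilde y^n$ component converges to a trajectory of $\dot y=\of(y)$ belonging to an internally chain transitive invariant set by Theorem \ref{thm: as_limit} together with standard o.d.e.-tracking on the slow scale; so the work is in characterizing $u^*$ and $w^*$.

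\textbf{Fast-scale recursion.} Starting from $\beta_{j+1}=\lambda(y_{j+1})-x_{j+1}$ and using $\|y_{j+1}-y_j\|=O(b(j))$ and the smoothness of $\lambda$ from \ref{assumption: fast_ode_convergence}, I write $\lambda(y_{j+1})=\lambda(y_j)+O(b(j))$. Since $\oh(\lambda(y_j),y_j)=0$, a Taylor expansion in $x$ yields $\oh(x_j,y_j)=-\nabla_x\oh(\lambda(y_j),y_j)\beta_j+O(\|\beta_j\|^2)$. To pass from $h(x_j,y_j,Y_j)$ to $\oh(x_j,y_j)$ I invoke the Poisson equation for the controlled chain: by \ref{assumption: markov_conditional_prob} one solves $(I-P_{x,y})\psi(\cdot,x,y)=h(\cdot,x,y)-\oh(x,y)$ with $\psi$ bounded and $C^1$ in $(x,y)$, so $h(x_j,y_j,Y_j)-\oh(x_j,y_j)$ decomposes into a telescoping piece $\psi(Y_j,x_j,y_j)-\psi(Y_{j+1},x_{j+1},y_{j+1})$, an $\mathcal F_{j+1}$-martingale difference, and small remainders involving $\nabla_{x,y}\psi$ times $\|x_{j+1}-x_j\|$ and $\|y_{j+1}-y_j\|$. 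Dividing by $\sqrt{a(j+1)}$ and using $\sqrt{a(j)/a(j+1)}\to 1$ (from $\varphi=0$), I arrive at
\begin{align*}
u_{j+1}=u_j+a(j)\nabla_x\oh(\lambda(y_j),y_j)u_j-\sqrt{a(j)}M^f_{j+1}+\mathrm{err}_j,
\end{align*}
where $\mathrm{err}_j$ contains the $O(b(j)/\sqrt{a(j)})$ remainder from the $\lambda$-expansion (negligible since $b/a^{3/2}\to 0$), the quadratic Taylor remainder (handled using the exponential contraction of $\|\beta_j\|$ from \ref{assumption: fast_ode_convergence_martingale} and \ref{assumption: LemmaA.28} via Abel summation), the telescoping Poisson term (summable by parts using \ref{assumption: alpha_beta}), and a secondary martingale. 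The leading martingale $\sum\sqrt{a(j)}M^f_{j+1}$ converges in law to $\int_0^{\cdot}G(\lambda(y^*(s)),y^*(s))dB(s)$ by Rebolledo's martingale FCLT, using the conditional covariance in \ref{assumption: martingale} to identify the quadratic variation. Combining these yields \eqref{eqn: fast_sde}.

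\textbf{Slow-scale recursion.} For $\alpha_{j+1}^n=y_{j+1}-y^n(t^s(j+1))$ I expand $y^n(t^s(j+1))=y^n(t^s(j))+b(j)\of(y^n(t^s(j)))+O(b(j)^2)$ and apply the analogous Poisson decomposition to $g$. Then I Taylor-expand $\og(x_j,y_j)-\og(\lambda(y_j),y_j)$ around $x_j=\lambda(y_j)$ to extract the cross-coupling in $\beta_j$, and $\of(y_j)-\of(y^n(t^s(j)))$ around $y_j=y^n(t^s(j))$ to get the self-linearisation in $\alpha_j^n$. Substituting $\beta_j=\sqrt{a(j)}u_j$ and $\alpha_j^n=\sqrt{a(j)}w_j^n$ and dividing by $\sqrt{a(j+1)}$ gives a recursion whose drift matches \eqref{eqn: slow_ode} exactly, plus a stochastic term $\frac{b(j)}{\sqrt{a(j+1)}}M^s_{j+1}$ and small errors. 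The crucial point is that the $\sqrt{a}$-scaling (not $\sqrt{b}$) annihilates every stochastic forcing in the slow component: the accumulated quadratic variation of $\sum b(j)M^s_{j+1}/\sqrt{a(j+1)}$ over a slow window of length $T$ is bounded by $T\cdot\sup_{j}b(j)/a(j)\to 0$ (since $b(j)\le a(j)^{3/2}$), and identical bounds using \ref{assumption: bn_omega} and \ref{assumption: LemmaA.28} dispose of the Poisson-equation martingale and remainders. Only the two drift contributions survive, yielding \eqref{eqn: slow_ode}.

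\textbf{Passage to the limit and main obstacle.} Tightness plus Skorokhod representation provides a weak subsequential limit; on that subsequence the deterministic pieces of the discrete recursions converge uniformly by continuity of $\nabla_x\oh$, $\nabla\of$, $\nabla_x\og$, $\lambda$ (and uniform convergence $\tilde y^n\to y^*$), the fast-scale martingale converges by Rebolledo, and the slow-scale stochastic contributions vanish as above. The central obstacle is the slow-scale analysis: with the \emph{natural} $\sqrt{b}$-scaling the leak-through from $\nabla_x\og\,\beta_j$ would enter as $\sqrt{a(j)/b(j)}\,\nabla_x\og\,u_j$ and diverge, while the intrinsic $M^s$-noise would be of order one; it is exactly the mismatched $\sqrt{a}$-scaling that converts the cross-coupling into a finite deterministic drift while simultaneously suppressing the slow martingale noise, and verifying this fine balance rigorously forces the joint use of \ref{assumption: alpha_beta}, \ref{assumption: bn_omega}, \ref{assumption: LemmaA.28} and the bound $b(n)/a(n)^{3/2}\to 0$ throughout the bookkeeping of Poisson-equation remainders.
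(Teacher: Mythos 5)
Your architecture matches the paper's: tightness via Kolmogorov's criterion plus fourth-moment bounds, a Poisson-equation decomposition of the Markov noise, the observation that the $\sqrt{a}$-scaling on the slow iterate simultaneously kills the slow-scale martingale and turns the fast-scale leak-through into a finite cross-coupling drift $\nabla_x\og\,u^*$, and identification of the weak limit via a martingale problem (Rebolledo's FCLT in your sketch, quadratic-variation plus Karatzas--Shreve representation in the paper's; these are equivalent routes). There is, however, a genuine gap in your identification of the fast-scale diffusion coefficient. You relegate the Poisson-equation martingale difference $\zeta^f_j(y_j)=V^f(x_j,y_j,Y_{j+1})-\sum_k p_{x_j,y_j}(k\mid Y_j)V^f(x_j,y_j,k)$ to $\mathrm{err}_j$ as a ``secondary martingale'' and then read off $G$ solely from the conditional covariance $Q_f$ of $M^f$ in (A8). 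But $\zeta^f_j$ is a bounded $O(1)$ martingale difference: the process $\sum_{k\le j}\sqrt{a(k)}\zeta^f_k$ accumulates quadratic variation of order $\sum a(k)E[\zeta^f_k{\zeta^f_k}^\intercal]\approx T$ over a fast window, which does not vanish. By (A13), $\zeta^f_n$ and $M^f_{n+1}$ are conditionally uncorrelated given $(x_n,y_n)$, so the diffusion matrix in \eqref{eqn: fast_sde} must be the square root of the covariance of the \emph{combined} martingale difference $\xi^f_k:=\zeta^f_k(x_k)+M^f_{k+1}$ --- precisely the $Q_f$ the paper re-defines in Section~\ref{sec: limit_fast_martingale}, which strictly dominates the $Q_f$ of (A8). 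As written, your $G$ omits the Markov-noise contribution and is too small.

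A secondary concern: you assert that tightness is ``furnished by (A6), (A8)'' plus oscillation bounds, but those assumptions control $E[\|x_n\|^4]$, not the amplified deviation $E[\|u_n\|^4]=E[\|\beta_n/\sqrt{a(n)}\|^4]$. Establishing the latter is a substantive step. One has to compare $u_n$ against the scaled deviation $\gamma^{m^{f-}(n)}_n$ of $x_n$ from the frozen-$y$ fast flow, prove a discrete Gronwall bound on that quantity (Lemma~\ref{lem: first_timescale_with_ode_fclt}), and then close a recursion in $b^u_n:=\max_{n\le k\le m^f(n)}E[\|u_k\|^4]$ using the exponential stability (A11) together with (A5) to choose $T$ large enough that $ce^{-dT}\sqrt{a(m^{f-}(n))/a(n)}<2^{-5/4}$. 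Your phrase ``exponential contraction of $\|\beta_j\|$ via Abel summation'' gestures at the right ingredients, but without exhibiting this recursion the tightness of $\{\tilde u^n\}$ --- and hence of $\{\tilde w^n\}$, which inherits the bound through the $\nabla_x\og\,u$ term --- is not actually established.
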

This result is stated under our assumption that $\varphi = 0$. Later in this work we consider $\varphi \neq 0$. With this in mind, some key calculations in what follows have been carried out without assuming $\varphi=0$, to underscore the fact that they remain valid otherwise. The impact of the statistics of the Markov chain $\{Y_n\}$ can be seen in the averaging with respect to its stationary distribution in the functions $\oh(\cdot, \cdot)$, $\of(\cdot)$ and $\og(\cdot,\cdot)$.

In particular, $u^*(\cdot)$ is a solution of a linear stochastic differential equation and $w^*(\cdot)$ is a solution of an \textit{ordinary} differential equation. One can obtain the following expression for $u^*(\cdot)$ using the variation of constants formula:
\begin{align*}
	u^*(t) = \int_{0}^{t} \Phi_1 (t,s) G(x^*(s)) dB(s).
\end{align*}
Here, $\Phi_1(t,s), t \geq s \geq 0$ is a solution of the matrix differential equation
\begin{align*}
	\frac{d}{dt} \Phi_1(t,s) = \left( \nabla_x \oh(\lambda(y^*(s)), y^*(s)) + \frac{\varphi}{2}I \right) \Phi_1(t,s), t \geq s; \ \Phi_1(s,s) = I.
\end{align*} 
As $y^*(\cdot)$ is a deterministic trajectory, $\Phi_1(\cdot, \cdot)$ is also deterministic. Thus $u^*(\cdot)$ is a zero mean Gaussian process. Similarly, one can obtain the following expression for $w^*(\cdot)$ using the variation of constants formula:
\begin{align*}
	w^*(t) = \int_{0}^{t} \Phi_2 (t,s) \nabla \of( y^*(s)) u^*(s) ds,
\end{align*}
where $\Phi_2(t,s), t \geq s \geq 0$ is a solution of the matrix differential equation
\begin{align*}
	\frac{d}{dt} \Phi_2(t,s) =  \nabla \of (y^*(s))  \Phi_2(t,s), t \geq s; \ \Phi_2(s,s) = I.
\end{align*} 
Note that $\Phi_2(\cdot, \cdot)$ is deterministic. As $u^*(\cdot)$ is a zero mean Gaussian process, so is $w^*(\cdot)$.	
\section{Proofs}
In this section, we will provide a proof of Theorem \ref{thm: main_thm}. 
\subsection{Preliminaries}
In this section, we discuss some preliminary results that will be required for the proof.
\subsubsection{Decomposition}
The iterative equations \eqref{eqn: iteration1} and \eqref{eqn: iteration2} can be rewritten in the following manner:
\begin{align}
	x_{n+1} &= x_n + a(n) \left[\overline{h}(x_n, y_n) + \Delta^f_{n}(y_n) + M^f_{n+1} \right], \label{eqn: iteration1_decomp}\\
	y_{n+1} &= y_n + b(n) \left[\overline{f}(y_n) + \Delta^s_n(x_n) + p(x_n, y_n) + M^s_{n+1} \right],\label{eqn: iteration2_decomp}
\end{align}
where
\begin{align}
	\Delta_n^f(y) &= h(x_n, y, Y_n) - \overline{h}(x_n, y), \label{eqn: m_decomp_f}\\
	\Delta_n^s(x) &= {g}(x, y_n, Y_n) - \overline{g}(x, y_n), \label{eqn: m_decomp_s} \\
	p(x_n, y_n) &= \og(x_n, y_n)-\of(y_n).
\end{align}
Let $V^f : \R^{d_1} \times \R^{d_2} \times S \rightarrow \R^{d_1}$, $V^s : \R^{d_1} \times \R^{d_2} \times S \rightarrow \R^{d_2}$ be the solutions of the parametrized Poisson equations in $V^{(\cdot)}(x,y,\cdot)$, paramtetrized by $x,y$, given by:
\begin{align*}
	V^f(x,y,i) &= h(x,y,i) - \sum_{j \in S} \pi_{x,y}(j) h(x,y,j) + \sum_{j \in S} p_{x,y}(j|i)V^f(x,y,j), \\
	V^sf(x,y,i) &= g(x,y,i) - \sum_{j \in S} \pi_{x,y}(j) g(x,y,j) + \sum_{j \in S} p_{x,y}(j|i)V^s(x,y,j).
\end{align*}
Solutions to these Poisson equations do exist (see \cite{borkar1991topics} for example) and are bounded and unique up to an additive constant: For $1 \leq k_1 \leq d_1$ and $1 \leq k \leq d_2$, adding a scalar $c^{k_1}(x,y)$, $c^{k}(x,y)$ to any solution $V_{k_1}^f(x,y,i), V_{k}^s(x,y,i)$ resp.\ for each state $i$ yields another solution in each case. We will consider the solutions $V^f, V^s$ such that $V^f(x,y,i_0) =0, V^s(x,y,i_0)=0, \forall x \in \R^{d_1}, y \in \R^{d_2}$ for a fixed $i_0 \in S$, which renders the solutions unique.  The mappings $(x,y) \mapsto V^f(x,y,i), V^s(x,y,i)$ are Lipschitz uniformly in $i$. (This follows easily from an application of the Cramer's formula in view of our assumptions on the map $(x,y) \mapsto p_{x,y}(\cdot|\cdot)$). Such methods were used by works such as \cite{benveniste_priouret_metivier}.

Now, \eqref{eqn: m_decomp_f} can be decomposed as:
\begin{align*}
	\Delta_n^f(y) &= h(x_n, y, Y_n) - \overline{h}(x_n, y), \\
	&=V^f(x_n, y, Y_n) -  \sum_{j \in S} p_{x_n,y}(j|Y_n)V^f(x_n,y,j) \\
	&= \left ( V^f(x_{n}, y, Y_{n+1}) -  \sum_{j \in S} p_{x_n,y}(j|Y_n)V^f(x_n,y,j) \right )\\&+V^f(x_{n}, y, Y_n) - V^f(x_{n+1}, y, Y_{n+1}) + \left ( V^f(x_{n+1}, y, Y_{n+1}) - V^f(x_n, y, Y_{n+1}) \right )\\
	&=: \zeta_{n}^f(y) +\tau^f_{n}(y)-\tau^f_{n+1}(y)+e^{f,\Delta}_{n}(y),
\end{align*}
where
\begin{align*}
	\zeta_{n}^f(y) &= V^f(x_n, y, Y_{n+1}) -  \sum_{j \in S} p_{x_n,y}(j|Y_n)V^f(x_n,y,j), \\
	\tau^f_{n}(y) &= V^f(x_n, y, Y_n), \\
	e^{f,\Delta}_{n}(y) &= V^f(x_{n+1}, y, Y_{n+1}) - V^f(x_n, y, Y_{n+1}). \numberthis \label{eqn: delta_decomp_fast}
\end{align*}
Similarly, \eqref{eqn: m_decomp_s} can be decomposed as:
\begin{align}
	\Delta_n^s(x) &= \zeta_{n}^s(x) +\tau^s_{n}(x)-\tau^s_{n+1}(x)+e^{s,\Delta}_{n}(x), \label{eqn: delta_decomp_slow}
\end{align}
where
\begin{align*}
	\zeta_{n}^s(x) &= V^s(x, y_n, Y_{n+1}) -  \sum_{j \in S} p_{x,y_n}(j|Y_n)V^s(x,y_n,j), \\
	\tau^s_{n}(x) &= V^s(x, y_n, Y_n), \\
	e^{s,\Delta}_{n}(x) &= V^s(x, y_{n+1}, Y_{n+1}) - V^s(x, y_n, Y_{n+1}).
\end{align*}
Note that $\zeta^f_n(y_n)$ and $\zeta^s_n(x_n)$ are martingale difference sequences. We thus have a two timescale system with Markovian and martingale noise that is reminiscent of the system with martingale noise alone, albeit with some additional error terms. 
\subsubsection{Iterative equations for fluctuations} \label{sec: prelims_iterative_eqns}
In this section, we will construct iterative equations for the fluctuations in each timescale. 

As the limiting value for $\{x(n)\}$ almost surely lies in $\{(\lambda(y), y) : y \in \R^{d_2} \}$, we now use the following device from \cite{pattathil_2018} to write an iterative equation for $\lambda(y_n)$ on the same timescale as $x_n$. This will prove useful in writing down an iterative equation for the fluctuation $\{\beta_n := \lambda(y_n) - x_n \}$. Let $v_n=\lambda\left(y_n\right)$, i.e., $h\left(v_n, y_n \right)=$ $0, n \geq 0$. Also define $\epsilon_n = b(n) / a(n)$. Using Taylor expansion, one can write the following iterative equation for $\left\{v_n\right\}$:
\begin{align*}
	v_{n+1} &= v_n + \nabla \lambda(y_n) \left( y_{n+1} - y_n \right) + e_{n+1}, \\
	&= v_n + \nabla \lambda(y_n) \left( g(x_k, y_k, Y_k) + M^s_{k+1} \right) + e_{n+1} \\
	&= v_n + \nabla \lambda(y_n) \left( \og(x_k, y_k) + \Delta^s_n(x_n) + M^s_{k+1} \right) + e_{n+1} \\
	&= v_n + a(n) \left( \oh(v_n, y_n) +  \epsilon_n\nabla \lambda(y_n) \og(x_n, y_n) + \epsilon_n \nabla \lambda(y_n) \Delta^s_n(x_n) \right.\\&\hspace{15em}\left.+ \epsilon_n \nabla \lambda(y_n)M^s_{n+1} + \frac{e_{n+1}}{a(n)} \right), \numberthis\label{eqn: vn_eqn}
\end{align*}
where 
\begin{align*}
	\frac{	\| e_{n+1}  \|}{a(n)} &= \frac{1}{a(n)}O \left ( \| y_{n+1} - y_n \|^2 \right ) \\
	& \leq \epsilon_n b(n) \newuck \left(B_g^2+2 B_g\left\|M_{n+1}^s\right\|+\left\|M_{n+1}^s\right\|^2\right) \\
	&\leq \newuck \left(\epsilon_n b(n)+\epsilon_n b(n)\left\|M_{n+1}^s\right\|+\epsilon_n b(n)\left\|M_{n+1}^s\right\|^2\right), \numberthis \label{eqn: en_bound}
\end{align*}
as $g(\cdot, \cdot, \cdot)$ is bounded . Subtracting \eqref{eqn: iteration1_decomp} from \eqref{eqn: vn_eqn}:
\begin{align*}
	\beta_{n+1} &= \beta_n + a(n) \left( \oh(v_n, y_n) - \oh(x_n, y_n ) + \epsilon_n\nabla \lambda(y_n) \og(x_n, y_n) + \epsilon_n \nabla \lambda(y_n)M^s_{n+1}  \right.\\&\hspace{10em}\left.+ \epsilon_n \nabla \lambda(y_n)\Delta^s_n(x_n) + \frac{e_{n+1}}{a(n)} - \Delta^f_n(y_n) - M^f_{n+1}\right) \\
	&=\beta_n + a(n) \left( \nabla_x \oh(x_n, y_n ) \beta_n + \epsilon_n\nabla \lambda(y_n) \og(x_n, y_n) + \epsilon_n \nabla \lambda(y_n)M^s_{n+1} \right.\\&\hspace{10em}\left.+ \epsilon_n \nabla \lambda(y_n)\Delta^s_n(x_n) + \frac{e_{n+1}}{a(n)} - \Delta^f_n(y_n) - M^f_{n+1}+ e^{f,T}_n \right), \numberthis\label{eqn: betan_eqn}
\end{align*}
where $e^{f,T}_n = o(\| \beta_n\|)$ is the error resulting from the Taylor expansion of $\oh(\cdot, y_n)$ centered at $x_n$ and evaluated at $v_n$. 

Next, we will write down an iterative equation for the fluctuation on the slower timescale. Using the Taylor expansion for $y^n(\cdot)$ centered at $y^n(t^s(j))$, we have that for $j \geq n$,
\begin{align}
	y^n(t^s(j+1)) = y^n(t^s(j)) + b(j) \left(  \overline{f}(y^n(t^s(j))) -\delta_j \right), \label{eqn: markov_second_ts_disc}
\end{align}
where $\delta_j = O(b(j))$ is the discretization error. Subtracting \eqref{eqn: markov_second_ts_disc} from \eqref{eqn: iteration2_decomp}, we have that
\begin{align}
	\alpha_{j+1}^n = \alpha_j^n + b(j) \left( \nabla \of(y^n(t^s(j))) \alpha_j^n + \delta_j +\Delta^s_n(x_n) +M^s_{n+1}+ e^{s,T}_n + p(x_n, y_n) \right), \label{eqn: alphan}
\end{align}
where $e^{s,T}_n = o(\| \alpha^n_j \| )$ is the error resulting from the Taylor expansion of $\of(\cdot)$ centered at $y^n(t^s(j))$ evaluated at $y_n$. 

\subsection{Broad overview of the proof of Theorem \ref{thm: main_thm}} \label{sec: proof_overview}
In the first part of the proof, we show that the laws of the processes $\{\tilde{w}^n(\cdot), \tilde{u}^n(\cdot), \tilde{y}^n(\cdot), n \geq 0\}$ are tight. 
We shall need the following well-known criterion for tightness of
probability measures on $C(\left[0, T\right]; \mathbb{R}^d)$ due to Kolmogorov:
\begin{lemma}[\cite{billingsley1968convergence}, p.\ 95]\label{lem: Lemma7.4} Let $\{\xi_{\alpha}(\cdot)\}$, for $\alpha$
	belonging to some prescribed index set $J$, be a family of $C(\left[0,
	T\right]; \mathbb{R}^d)$-valued random variables such that the laws of
	$\{\xi_{\alpha}(0)\}$ are tight in $\mathcal{P}(\mathbb{R}^d)$, and for
	some constants $a, b, c > 0$,
	\begin{equation}
		E\left[\|\xi_{\alpha}(t) - \xi_{\alpha}(s)\|^a\right] \leq b|t - s|^{1 + c} \
		\forall \ \alpha \in J, \ t, s \in \left[0,T\right]. \label{eqn: tightness_sufficient}
	\end{equation}
	Then the laws of $\{\xi_{\alpha}(\cdot)\}$ are tight in
	$\mathcal{P}(C(\left[0, T\right]; \mathbb{R}^d))$.
\end{lemma}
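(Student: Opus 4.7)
The plan is to combine the Arzelà–Ascoli characterization of precompact subsets of $C([0,T]; \mathbb{R}^d)$ with a dyadic chaining argument that upgrades the pointwise moment bound \eqref{eqn: tightness_sufficient} into a uniform-in-$\alpha$ modulus-of-continuity estimate. Recall that $K \subset C([0,T]; \mathbb{R}^d)$ is relatively compact if and only if $\{f(0) : f \in K\}$ is bounded and the functions in $K$ are uniformly equicontinuous, i.e. $\sup_{f \in K} \omega_T(f,\delta) \to 0$ as $\delta \downarrow 0$, where $\omega_T(f,\delta) := \sup_{|t-s| \leq \delta,\, s,t \in [0,T]} \|f(t)-f(s)\|$. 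By Prokhorov's theorem, tightness of $\{\xi_\alpha(\cdot)\}$ in $\mathcal{P}(C([0,T]; \mathbb{R}^d))$ therefore reduces to showing that for every $\eta, \varepsilon > 0$ there exists a $\delta > 0$ with
\[
\sup_{\alpha \in J} P\bigl(\omega_T(\xi_\alpha,\delta) > \varepsilon\bigr) < \eta,
\]
together with the already-given tightness of $\{\xi_\alpha(0)\}$.

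The first ingredient is the pointwise estimate coming from Markov's inequality applied to \eqref{eqn: tightness_sufficient}: for any $\lambda > 0$,
\[
P\bigl(\|\xi_\alpha(t) - \xi_\alpha(s)\| > \lambda\bigr) \leq \frac{b}{\lambda^{a}} |t-s|^{1+c}.
\]
The second, more delicate step is the dyadic chaining. I would fix $N$ large, set $\delta = 2^{-N}T$, and consider the dyadic grid $t_k^m := k\,2^{-m}T$ for $0 \leq k \leq 2^m$ and $m \geq N$. Any pair $s,t \in [0,T]$ with $|s-t| \leq \delta$ can be connected by a telescoping chain of adjacent dyadic points at successively finer levels, so $\|\xi_\alpha(t)-\xi_\alpha(s)\|$ is bounded by a geometric sum of increments $\|\xi_\alpha(t_{k+1}^m) - \xi_\alpha(t_k^m)\|$ for $m \geq N$. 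Choosing the threshold at level $m$ proportional to $2^{-m\theta}$ for a suitable $\theta \in (0, c/a)$ and union-bounding over the $2^m$ pairs at that level, each level contributes at most
\[
2^m \cdot \frac{b}{(2^{-m\theta})^{a}} (2^{-m}T)^{1+c} \;=\; b\,T^{1+c}\, 2^{-m(c - a\theta)},
\]
which is summable in $m$ because $c - a\theta > 0$. Summing from $m = N$ onwards gives a bound of the form $P(\omega_T(\xi_\alpha,\delta) > \varepsilon) \leq C\,\delta^{\gamma}$ for some $\gamma > 0$ and a constant $C$ independent of $\alpha$.

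The main obstacle is the bookkeeping in the chaining step: one must pick the per-level thresholds so that they sum to at most $\varepsilon$ while keeping the failure probabilities summable, which forces the balance $c - a\theta > 0$ and uses the exponent $c > 0$ critically. Once the modulus-of-continuity estimate is in place, the tightness of $\{\xi_\alpha(0)\}$ supplies an $M$ with $P(\|\xi_\alpha(0)\| > M) < \eta/2$, and intersecting with events of the form $\{\omega_T(\xi_\alpha, 2^{-n_k}) \leq 1/k\}$, for a sequence $n_k$ chosen so the total failure probability is at most $\eta/2$, exhibits a compact set (by Arzelà–Ascoli) carrying mass at least $1-\eta$ uniformly in $\alpha$. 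This yields the desired tightness in $\mathcal{P}(C([0,T]; \mathbb{R}^d))$.
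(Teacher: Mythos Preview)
The paper does not prove this lemma; it is stated with a citation to \cite{billingsley1968convergence} and invoked as a black box throughout. Your sketch is the standard proof of Kolmogorov's tightness criterion via dyadic chaining and Arzel\`a--Ascoli, and the outline is correct: the balance condition $\theta < c/a$ is exactly what makes the per-level failure probabilities summable while the thresholds $2^{-m\theta}$ still sum to something controllable. Since there is no proof in the paper to compare against, there is nothing further to contrast.
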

Tightness of the laws of $\{\tilde{y}^n(0)\}$ follows from \ref{assumption: bdd_fourth_moment}, as 
\begin{align*}
	P \left( \left \| y^n(0) \right \| > a \right) \leq \frac{E \left[ \left \| y^n(0)\right \|^4 \right]}{a^4} \leq \frac{\newuck}{a^4}.
\end{align*}
The tightness of laws for $\tilde{y}^n(\cdot)$ follows easily from Lemma \ref{lem: Lemma7.4} given the tightness of the laws of their initial conditions established above and the bound
$$E\left[\|\tilde{y}^n(t) - \tilde{y}(s)\|^2\right] \leq K|t-s|^2, \ t > s, |t-s|\leq  T,$$
for some $K > 0$ which follows by a standard application of the Gronwall inequality in view of the Lipschitz conditions of $g,\lambda$. Sections \ref{sec: tightness_fast} and \ref{sec: tightness_slow_martingale} show \eqref{eqn: tightness_sufficient} for the fast and slow timescale respectively, thus proving tightness of the laws of $\{\tilde{u}^n(\cdot)\}$ and $\{\tilde{w}^n(\cdot)\}$ respectively. From Prokhorov's theorem, it then follows that the laws of the processes $\{\tilde{w}^n(\cdot), \tilde{u}^n(\cdot), \tilde{y}^n(\cdot), n \geq 0\}$ are relatively compact in $\mathcal{P} \left( C(\left[0, T\right]; \mathbb{R}^d) \right)^3$ and therefore in  
$\mathcal{P} \left( C(\left[0, T\right]; \mathbb{R}^d)^3 \right)$. 

In the second part of the proof, we characterize the limit of any convergent subsequence in law and show that all convergent subsequences in law have the same limit. For $\{\tilde{y}^n(\cdot)\}$, this trivially follows from Theorem \ref{thm: as_limit} which ensures that it remains bounded with bounded derivatives. Sections \ref{sec: limit_fast_martingale} and \ref{sec: limit_slow_martingale} prove this for $\{\tilde{u}^n(\cdot)\}$ and $\{\tilde{w}^n(\cdot)\}$ respectively.

\subsection{Tightness for the fast timescale} \label{sec: tightness_fast}
Recall \eqref{eqn: betan_eqn} from Section \ref{sec: prelims_iterative_eqns}. For $n \geq 0$:
\begin{align*}
	\beta_{n+1} &=\beta_n + a(n) \left( \nabla_x \oh(x_n, y_n ) \beta_n + \epsilon_n\nabla \lambda(y_n) \og(x_n, y_n) + \epsilon_n \nabla \lambda(y_n)M^s_{n+1} \right.\\&\hspace{10em}\left.+ \epsilon_n \nabla \lambda(y_n)\Delta^s_n(x_n) + \frac{e_{n+1}}{a(n)} - \Delta^f_n(y_n) - M^f_{n+1}+ e^{f,T}_n \right). 
\end{align*}
Using this equation repeatedly, we get that for any $i \geq 1$,
\begin{align*}	\beta_{n+i} &= \prod_{j=n}^{n+i-1} \left( I+a(j) \nabla_x \oh(x_j, y_j) \right) \beta_n \\&+ \sum_{j=n}^{n+i-1} a(j) \left (\prod_{k=j+1}^{n+i-1} \left( I+a(k) \nabla_x \oh(x_k, y_k)  \right) \right ) \Biggl(  \epsilon_j\nabla \lambda(y_j) \og(x_j, y_j) \\&\hspace{2em}+ \epsilon_j \nabla \lambda(y_j)M^s_{j+1} + \epsilon_j \nabla \lambda(y_j)\Delta^s_j(x_j) + \frac{e_{j+1}}{a(j)} - \Delta^f_j(y_j) - M^f_{j+1}+ e^{f,T}_j \Biggr).
\end{align*}
Let $\{ \beta_{n+i}^n\}_{i=0}^{m^f(n)-n}$ be a sequence satisfying the above iteration with $\beta_n^n=0$. Then,
\begin{align*}
	\beta_{n+i}^n &=   \sum_{j=n}^{n+i-1} a(j) \left (\prod_{k=j+1}^{n+i-1} \left( I+a(k) \nabla_x \oh(x_k, y_k)  \right) \right ) \Biggl(  \epsilon_j\nabla \lambda(y_j) \og(x_j, y_j) \\&\hspace{2em}+ \epsilon_j \nabla \lambda(y_j)M^s_{j+1} + \epsilon_j \nabla \lambda(y_j)\Delta^s_j(x_j) + \frac{e_{j+1}}{a(j)} - \Delta^f_j(y_j) - M^f_{j+1}+ e^{f,T}_j \Biggr).
\end{align*}
Recall the following definitions from Section \ref{sec: notation}:
\begin{align*}
	u_k &:= \frac{\beta_k}{\sqrt{a(k)}}, \\
	\nu_k^n &:= \frac{\beta_k^n}{\sqrt{a(k)}}.
\end{align*}
Then,
\begin{align*}
	\nu_{n+i}^n &=  \sum_{j=n}^{n+i-1} \sqrt{a(j)} \left (\prod_{k=j+1}^{n+i-1} \left( I+a(k) \nabla_x \oh(x_k, y_k)  \right) \sqrt{\frac{a(k)}{a(k+1)}} \right )  \Biggl(  \epsilon_j\nabla \lambda(y_j) \og(x_j, y_j) \\&\hspace{4em}+ \epsilon_j \nabla \lambda(y_j)M^s_{j+1} + \epsilon_j \nabla \lambda(y_j)\Delta^s_j(x_j) + \frac{e_{j+1}}{a(j)} \\&\hspace{14em}- \Delta^f_j(y_j) - M^f_{j+1}+ e^{f,T}_j \Biggr) \sqrt{\frac{a(j)}{a(j+1)}}. \numberthis \label{eqn: first_scaled_iterate}
\end{align*}
As defined in Section \ref{sec: notation},  $\nu^n(t), t \in \left[t^f(n), t^f(n)+T\right]$ is a piecewise linear interpolation of $\{\nu^n_k\}_{k=n}^{m^f(n)}$ on the algorithmic timescale. Define $\mu^n_j, n \leq j \leq m^f(n),$ such that $\mu^n_j = u_j - \nu^n_j$. Then
\begin{align*}
	\mu^n_j = \prod_{j=n}^{j-1} \left( I+a(j) \nabla_x \oh(x_j, y_j) \right) \sqrt{\frac{a(j)}{a(j+1)}} u_n.
\end{align*}
For $t \in \left[t^f(n), t^f(n)+T\right]$, let $\mu^n(t) = u^n(t) - \nu^n(t)$, which is a linearly interpolated version of $\{\mu^n_j, n \leq j \leq m^f(n)\}$. Thus, it is enough to show tightness for the laws of $\{\mu^n(\cdot)\}$ and $\{\nu^n(\cdot)\}$ to prove tightness for the laws of $\{u^n(\cdot)\}$. 
\subsubsection{Proving tightness for $\{\mu^n(\cdot)\}_{n \geq 1}$ } \label{sec: martingale_tightness_fast_mu}
To show tightness for the laws of $\{\mu^n(\cdot)\}$, we will use Lemma \ref{lem: Lemma7.4}. We first need the following intermediate result: for $n \leq  j \leq m^f(n)$,
\begin{align*}
	\prod_{r=n}^{j-1} \sqrt{\frac{a(r)}{a(r+1)}} &= \exp \left(\sum_{r=n}^{j-1} \ln \left( \sqrt{\frac{a(r)}{a(r+1)} }  \right) \right ) \\
	&\leq \exp \left(\frac{1}{2}\left( \sum_{r=n}^{j-1} \frac{a(r)}{a(r+1)} - 1\right) \right) \\
	&= \exp \left(\frac{1}{2} \sum_{r=n}^{j-1} a(r) \left(\frac{1}{a(r+1)} - \frac{1}{a(r)}\right) \right ) \\
	&\leq \exp \left(\newuck \sum_{r=n}^{j-1} a(r) \right ),
\end{align*}
where the last step follows from \ref{assumption: alpha_beta}. 
Note that $\mu^n(0) = u_n$, and for $n \leq k < j \leq m^f(n)$, it follows that
\begin{align*}
	&E \left[\left \| \mu^n_j - \mu^n_k \right \|^4  \right] \\
	&= E \left[ \left \| u_n \right \|^4 \left \| \prod_{r=n}^{j-1} \left (1 + a(r)\nabla_x \oh(x_{r}, y_{r}) \right ) \sqrt{\frac{a(r)}{a(r+1)}} \right.\right. \\ &\hspace{15em} \left.\left.- \prod_{r=n}^{k-1} \left (1 + a(r)\nabla_x \oh(x_{r}, y_{r})   \right ) \sqrt{\frac{a(r)}{a(r+1)}} \right \|^4  \right] \\
	&= E \left[ \left \| u_n \right \|^4 \left \| \exp \left( \ln \left (\prod_{r=n}^{j-1} \left (1 + a(r)\nabla_x \oh(x_{r}, y_{r}) \right )  \sqrt{\frac{a(r)}{a(r+1)}} \right ) \right ) \right.\right. \\ &\hspace{15em} \left.\left.- \exp \left( \ln \left (\prod_{r=n}^{k-1} \left (1 + a(r)\nabla_x \oh(x_{r}, y_{r})  \right ) \sqrt{\frac{a(r)}{a(r+1)}} \right )  \right )  \right \|^4  \right] \\
	&\leq  E \left[ \left \| u_n \ \exp \left(\ln \left (\prod_{r=n}^{m^f(n)-1} \left (1 + a(r)\nabla_x \oh(x_{r}, y_{r}) \right ) \sqrt{\frac{a(r)}{a(r+1)}}  \right )\right) \right \|^4 \times \right. \\ & \left. \hspace{2em} \left \| {\ln \left (\prod_{r=n}^{j-1} \left (1 + a(r)\nabla_x \oh(x_{r}, y_{r}) \right ) \sqrt{\frac{a(r)}{a(r+1)}} \right )} \right.\right. \\ &\hspace{15em} \left.\left.- { \ln \left (\prod_{r=n}^{k-1} \left (1 + a(r)\nabla_x \oh(x_{r}, y_{r}) \right ) \sqrt{\frac{a(r)}{a(r+1)}} \right ) } \right \|^4  \right],
\end{align*}
where the last step follows from the mean value theorem.
Thus,
$$
\begin{aligned}
	&E \left[\left \| \mu^n_j - \mu^n_k \right \|^4  \right] \\
	&\leq  \exp \left( 4\newuck\label{const: D_xh} \sum_{r=n}^{m^f(n)-1} a(r) \right ) \  E \left[ \left \| u_n \right \|^4 \left \|  {\ln \left (\prod_{r=k}^{j-1} \left (1 + a(r)\nabla_x \oh(x_{r}, y_{r}) \right ) \sqrt{\frac{a(r)}{a(r+1)}} \right )} \right \|^4 \right] \\
	&\leq \newuck \exp \left( 4\olduck{const: D_xh}\sum_{r=n}^{m^f(n)-1} a(r) \right ) \  E \left[ \left \| u_n \right \|^4 \left \|  { \sum_{r=k}^{j-1}  a(r)  } \right \|^4 \right] \\
	&\leq \newuck E \left[ \left \| u_n \right \|^4 \right] \left(t^f(j) - t^f(k)\right)^4,
\end{aligned}
$$
for a suitable bound $\olduck{const: D_xh}> 0$ on $\|\nabla_x \oh(\cdot,\cdot)\|$. This follows from repeated applications of the result that $\ln(1+x) \leq x$ and \ref{assumption: hg}.

To complete the proof of tightness for the laws of $\{\mu^n(\cdot)\}$, it thus suffices to show that the fourth moments of $\{u_n\}_{n \geq 1}$ are uniformly bounded, i.e., $\{E \left [ \left \| u_n \right \|^4 \right ]\}_{n \geq 1}$ is a bounded sequence. 

\subsubsection{Proof that $\left\{ E \left[\left \| u_n \right \|^4 \right] \right\}_{n \geq 1} $ is a bounded sequence.} \label{sec: un_bound}

Let $x^n(t), t \geq t^f(n),$ be the unique solution of $\dot{x}(t) = h(x(t), y_n)$ starting at $t^f(n)$: 
\begin{align*}
	\dot{x}^n(t) = \oh(x^n(t), y_n), t \geq t(n); x^n(t(n)) = x_n. 
\end{align*}
Define $\gamma^n_j = \frac{x_j - x^n(t^f(j))}{\sqrt{a(j)}}, n \leq j \leq m^f(n)$. The following lemma shows the boundedness of the fourth moments thereof, the proof of which can be found in Appendix \ref{sec: appendix_martingale_gamma}. 
%
\begin{lemma} \label{lem: first_timescale_with_ode_fclt}
	There exists a constant $\kappa$ such that for each $ n$,
	\begin{align*}
		\underset{n \leq k \leq m^f(n)}{\sup} E \left[\| \gamma^n_k \|^4 \right] < \kappa < \infty.
	\end{align*}
\end{lemma}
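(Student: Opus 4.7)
\textbf{Proof proposal for Lemma \ref{lem: first_timescale_with_ode_fclt}.}

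The plan is to mirror the derivation of the iterative equation for $\beta_n$ carried out in Section \ref{sec: prelims_iterative_eqns}, but now comparing $\{x_j\}$ to the \emph{frozen-$y$} ODE $\dot{x}(t)=\oh(x(t),y_n)$ rather than to $\lambda(y_n)$. Subtracting a Taylor-expanded one-step increment of $x^n(\cdot)$ (whose discretization error is $O(a(j)^2)$) from \eqref{eqn: iteration1_decomp} and Taylor-expanding $\oh(\cdot,y_n)$ at $x^n(t^f(j))$ as well as $\oh(x_j,\cdot)$ at $y_n$, I would obtain
\begin{align*}
  \beta^n_{j+1} &= \beta^n_j + a(j)\Big[\nabla_x\oh(x^n(t^f(j)),y_n)\,\beta^n_j + \nabla_y\oh(\cdot,\cdot)(y_j-y_n) \\
  &\qquad\qquad + \Delta^f_j(y_j) + M^f_{j+1} + r_j\Big],
\end{align*}
where $r_j$ collects the quadratic Taylor remainder and the ODE discretization error, both $O(a(j))$ in magnitude. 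Scaling by $\sqrt{a(j+1)}^{-1}$ as in \eqref{eqn: first_scaled_iterate} and using $\gamma^n_n=0$, I would unroll the recursion to write
$$
\gamma^n_k \;=\; \sum_{j=n}^{k-1} \sqrt{a(j)}\,\Pi^n_{j,k}\,\Big[\nabla_y\oh(y_j-y_n)+\Delta^f_j(y_j)+M^f_{j+1}+r_j\Big]\sqrt{\tfrac{a(j)}{a(j+1)}},
$$
where $\Pi^n_{j,k}:=\prod_{r=j+1}^{k-1}\!\bigl(I+a(r)\nabla_x\oh(x^n(t^f(r)),y_n)\bigr)\sqrt{a(r)/a(r+1)}$ satisfies $\|\Pi^n_{j,k}\|\le e^{\newuck(T+1)}$ by the log-estimate already established in Section \ref{sec: martingale_tightness_fast_mu} together with the boundedness of $\nabla_x\oh$ from assumption \ref{assumption: hg}.

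I would then bound $E[\|\gamma^n_k\|^4]$ term by term via Minkowski. The pure martingale contribution coming from $M^f_{j+1}$ is handled by Burkholder--Davis--Gundy: its fourth moment is at most $C\bigl(\sum_{j=n}^{k-1} a(j)\bigr)^2\sup_j E[\|M^f_{j+1}\|^4]$, which is $O((T{+}1)^2)$ thanks to \ref{assumption: martingale} and \ref{assumption: bdd_fourth_moment}. Substituting the Poisson decomposition \eqref{eqn: delta_decomp_fast} for $\Delta^f_j(y_j)$ splits this noise into (i) a martingale difference $\zeta^f_j(y_j)$, handled by BDG exactly as for $M^f$ using Lipschitz boundedness of $V^f$; (ii) a telescoping pair $\tau^f_j-\tau^f_{j+1}$, handled by Abel summation so that the bulk cancels and the residue is $\sum_j O(a(j)^2)\cdot\|V^f\|$ plus boundary terms of order $\sqrt{a(n)}$; and (iii) the difference $e^{f,\Delta}_j$ which is controlled by the Lipschitz continuity of $V^f(\cdot,y,i)$ and the $a(j)$-increment bound on $\{x_j\}$, giving another $o(1)$ contribution.

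The main obstacle is the term driven by $y_j-y_n$, which does not vanish from the equation by any timescale separation within this lemma. I would first show that under \ref{assumption: martingale} and \ref{assumption: bdd_fourth_moment}, BDG applied to the slow martingale $\sum b(l)M^s_{l+1}$ combined with $b(l)\le a(l)^{3/2}\le a(n)^{3/2}$ gives
$$
E\!\left[\|y_j-y_n\|^4\right] \;\le\; \newuck\,a(n)^{2}(T+1)^{4}, \qquad n\le j\le m^f(n),
$$
because $\sum_{l=n}^{j-1}b(l)\le a(n)^{1/2}(T+1)$ and $\sum_{l=n}^{j-1}b(l)^2\le a(n)^{2}(T+1)$. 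Thus the $L^4$-norm of $\sum_j \sqrt{a(j)}\,\Pi^n_{j,k}\nabla_y\oh(y_j-y_n)$ is bounded by $e^{\newuck T}\,\sqrt{a(n)}\,(T+1)\sum_{j=n}^{k-1}\sqrt{a(j)}$. Since $\sum_{j=n}^{k-1}\sqrt{a(j)}\le (m^f(n){-}n)\sqrt{a(n)}$ and $(m^f(n){-}n)\,a(n)\lesssim T+1$ by the definition of $m^f$ and monotonicity of $a(\cdot)$ (\ref{assumption: an_decreasing}), this sum is $O(T+1)$ uniformly in $n$. Combining all contributions and applying Minkowski to take fourth moments yields a bound independent of $n$ and of $k\in[n,m^f(n)]$, completing the proof.
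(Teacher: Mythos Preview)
Your overall skeleton matches the paper's: derive a recursion for $\rho^n_j=x_j-x^n(t^f(j))$, scale by $\sqrt{a(j)}$, unroll, and bound each contribution. However, there is a genuine gap in how you treat the Taylor remainder in $x$.

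You assert that $r_j$, which contains the quadratic Taylor error from expanding $\oh(\cdot,y_n)$ about $x^n(t^f(j))$, is $O(a(j))$. That remainder is $O(\|\rho^n_j\|^2)=O(a(j)\|\gamma^n_j\|^2)$ (or, at first order, $o(\|\rho^n_j\|)=o(\sqrt{a(j)}\|\gamma^n_j\|)$); either way it depends on the very quantity $\gamma^n_j$ whose boundedness you are trying to establish. You cannot simply absorb it into an $O(a(j))$ constant without already knowing the conclusion. The paper keeps this term explicit, obtaining after scaling a contribution bounded by $C\sum_{j} a(j)\|\gamma^n_j\|$, which then feeds into an inequality of the form
\[
E[\|\gamma^n_k\|^4]\;\le\;C\Big(\sum_{j=n}^{k}a(j)\Big)^2\Big(1+\sum_{j=n}^{k}a(j)\,E[\|\gamma^n_j\|^4]\Big),
\]
and closes via the discrete Gronwall inequality. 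Your argument lacks this Gronwall step, so as written it is circular.

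A secondary issue: your bound $(m^f(n)-n)\,a(n)\lesssim T+1$ does not follow from monotonicity of $a(\cdot)$ and the definition of $m^f$. Monotonicity gives the \emph{opposite} inequality, $(m^f(n)-n)\,a(n)\ge\sum_{j=n}^{m^f(n)-1}a(j)\ge T$; you only get $(m^f(n)-n)\,a(m^f(n)-1)\le T+1$. The bound you want can be recovered by also invoking \ref{assumption: LemmaA.28} to control $a(n)/a(m^f(n))$, but the justification you gave is incorrect. The paper sidesteps this entirely by bounding $E[\|(y_j-y_n)/\sqrt{a(j)}\|^4]$ directly via Jensen's inequality, which yields $C\sup_k(b(k)/a(k)^{3/2})^4$ and hence an $O\big((\sum a(j))^4\big)$ bound for the full $\nabla_y\oh$ sum without needing to count terms.
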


Now, to show that the fourth moments of $\{u_n\}$ are bounded using the above result, we have the following equation:
\begin{align*}
	\gamma_n^{m^{f-}(n)} &= \frac{x_{n} - x^{m^{f-}(n)}(t^f(n))}{\sqrt{a(n)}}\\
	&= u_n + \frac{\lambda(y_n)-x^{m^{f-}(n)}(t^f(n))}{\sqrt{a(n)}}.
\end{align*}
By rearranging the above equation, we have that
\begin{align*}
	\left \| \gamma_n^{m^{f-}(n)} - u_n \right \| &= \frac{ \left \| \lambda(y_{n})-x^{m^{f-}(n)}(t^f(n)) \right \| }{\sqrt{a(n)}} \\
	&\leq \frac{\left \| \lambda(y_n) - \lambda(y_{m^{f-}(n)})  \right \| + \left \| \lambda(y_{m^{f-}(n)}) -x^{m^{f-}(n)}(t^f(n)) \right \|}{\sqrt{a(n)}}  \\
	&\leq \frac{\left \| \lambda(y_n) - \lambda(y_{m^{f-}(n)})  \right \|}{\sqrt{a(n)}} + c e^{-dT} \frac{\left \| \lambda(y_{m^{f-}(n)}) - x_{m^{f-}(n)} \right \|}{\sqrt{a(n)}}  \\
	&= \sqrt{\frac{a(m^{f-}(n))}{a(n)}}\frac{\left \| \lambda(y_n) - \lambda(y_{m^{f-}(n)}) \right \|}{\sqrt{a(m^{f-}(n))}} + c e^{-dT} \sqrt{\frac{a(m^{f-}(n))}{a(n)}} u_{m^{f-}(n)},  \numberthis \label{eqn: first_ts_clt_bounding}
\end{align*}
where the penultimate step follows from \ref{assumption: fast_ode_convergence_martingale}. The fourth moment of the first term can be shown to converge to 0 uniformly in $T$ as $n \uparrow \infty$ as follows:
\begin{align*}
	&E \left [  \left ( \sqrt{\frac{a(m^{f-}(n))}{a(n)}} \frac{\left \| \lambda(y_n) - \lambda(y_{m^{f-}(n)}) \right \|}{\sqrt{a(m^{f-}(n))}} \right )^4 \right ] \\
	&\leq 	\newuck \ E \left [ \left (  \frac{\sum^{n-1}_{k=m^{f-}(n)} \frac{b(k)}{a(k)} a(k) \left( \left \| g(x_k, y_k,Y_k) \right \| + \left \| M^s(k+1) \right \| \right) }{\sqrt{a(m^{f-}(n))}} \right )^4 \right ] \\
	&\leq	\newuck \ T^3 \ E \left [   \frac{ \sum^{n-1}_{k=m^{f-}(n)} \left (\frac{b(k)}{a(k)} \right )^4 a(k) \left( \left \| g(x_k, y_k,Y_k) \right \|^4 + \left \| M^s(k+1) \right \|^4 \right) }{a(m^{f-}(n))^2} \right ] \\ 
	&\leq \newuck \ T^3 \underset{m^{f-}(n) \leq k \leq n-1}{\sup } \left (\frac{b(k)}{ a(k)} \right )^4  E \left [  \frac{ \sum^{n-1}_{k=m^{f-}(n)} a(k) }{a(m^{f-}(n))^2}  \right ] \\
	&\leq \frac{\newuck \ T^4}{a(m^{f-}(n))^2}  \underset{m^{f-}(n) \leq k \leq n-1}{\sup } \left (\frac{b(k)}{ a(k)} \right )^4 \\
	&\leq  \newuck \ T^4 \underset{m^{f-}(n) \leq k \leq n-1}{\sup } \left (\frac{b(k)}{ a(k)^{3/2}} \right )^4   \overset{n \uparrow \infty}{\rightarrow} 0, \numberthis\label{eqn: first_ts_CLT_y_error}
\end{align*}
where the first step follows from the Lipschitz property of $\lambda(\cdot)$ and \ref{assumption: LemmaA.28}, the second step follows from using the convexity of the map $x \mapsto x^4$ map and that $(a+b)^4 \leq 8(a^4+b^4)$, the third step follows from the boundedness of $g(\cdot, \cdot,\cdot)$ and the fourth moment of $M^s(\cdot)$, the fifth step follows from $a(n)$ decreasing to zero monotonically, and the last step follows from $b(n) = o(a(n)^{3/2})$. It follows that for some $\newuck\label{const: firsttsCLTyerrorub}>0$ and for a large enough $n$,
$$
\begin{aligned}
	\underset{n \leq k \leq m^f(n)}{\max} E\left [ \left (\frac{\left \| \lambda(y_k) - \lambda(y_{m^{f-}(k)}) \right \|}{\sqrt{a(k)}} \right )^4 \right] < \olduck{const: firsttsCLTyerrorub}.
\end{aligned}
$$
The final step is to show that $\{E[ \|u_n\|^4 ]\}_{n \geq 1}$ is bounded. 

From \eqref{eqn: first_ts_clt_bounding}, we have that
\begin{align*}
	\|u_n\| \leq \| \gamma_n^{m^{f-}(n)}  \| + \frac{\left \| \lambda(y_n) - \lambda(y_{m^{f-}(n)}) \right \|}{\sqrt{a(n)}} + c e^{-dT} \sqrt{\frac{a(m^{f-}(n))}{a(n)}} \left \| u_{m^{f-}(n)} \right \|.
\end{align*}
From \ref{assumption: LemmaA.28}, it also follows that we can choose a $T$ large enough so that for all $k \in \mathbb{N}$, 
\begin{align*}
	c e^{-dT} \sqrt{\frac{a(k)}{a(m^f(k))}} < \zeta(T) < \frac{1}{2^{5/4}},
\end{align*}
where $\zeta(T)$ is a decreasing function of $T$. Let $b^u_n := \underset{n \leq k \leq m^f(n)}{\max} E \left[ \| u_k \|^4 \right]$. Combining the above results, boundedness of the fourth moment of $E[\| \gamma_k^{m^{f-}(k)}  \|^4 ] $ from Lemma \ref{lem: first_timescale_with_ode_fclt}, and the fact that $(a+b+c)^4 \leq 16(a^4+b^4+c^4)$, we have that
\begin{align*}
	b^u_n &\leq 16\left( \underset{n \leq k \leq m^f(n)}{\max} E[\| \gamma_k^{m^{f-}(k)}  \|^4 ]  + \olduck{const: firsttsCLTyerrorub}  + \zeta(T)^4 \underset{m^{f-}(n) \leq k \leq n}{\max} E \left [\left \| u_{k} \right \|^4 \right ] \right) \\
	&\leq \newuck + 16\zeta(T)^4 b^u_{m^{f-}(n)} \leq \newuck + \frac{1}{2}b^u_{m^{f-}(n)}.
\end{align*}
It follows that $\{E[\| u_n \|^4 ]\}_{n \geq 1}$ is bounded.

\subsubsection{ Proving tightness for $\{\nu^n(\cdot)\}_{n \geq 1}$.} \label{sec: tightness_nu}
The following lemma coupled with Lemma \ref{lem: Lemma7.4} proves tightness for $\{\nu^n(\cdot)\}_{n \geq 1}$. 
\begin{lemma} \label{lem: tightness_first_ts}
	For $0 \leq n \leq k < \ell \leq m^f(n)$,
	\begin{align*}
		E \left[ \left \| \nu_{\ell} - \nu_{k} \right \|^4 \right] =O \left( \left( t^f(\ell) - t^f(k)   \right)^2 \right). 
	\end{align*}
\end{lemma}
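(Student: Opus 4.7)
The plan is to expand $\nu^n_\ell - \nu^n_k$ as a telescoped sum via the one-step recursion for $\{\nu^n_j\}$ implicit in \eqref{eqn: first_scaled_iterate}, then bound each contribution in $L^4$. Writing $N_j := \sqrt{a(j)/a(j+1)}\bigl(I + a(j)\nabla_x\oh(x_j,y_j)\bigr)$ and letting $S_j$ denote the bracketed forcing, the recursion reads $\nu^n_{j+1}-\nu^n_j = (N_j-I)\nu^n_j + \sqrt{a(j)\cdot a(j)/a(j+1)}\,S_j$, so that $\nu^n_\ell - \nu^n_k = I_1 + I_2$ with $I_1 := \sum_{j=k}^{\ell-1}(N_j-I)\nu^n_j$ and $I_2 := \sum_{j=k}^{\ell-1}\sqrt{a(j)\cdot a(j)/a(j+1)}\,S_j$. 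Assumption \ref{assumption: alpha_beta} implies $\|N_j - I\| \leq K a(j)$ (the $\sqrt{a(j)/a(j+1)}-1$ piece is $o(a(j))$ since $a(j)-a(j+1) = o(a(j)^2)$), and $\sup_{n\leq j\leq m^f(n)} E[\|\nu^n_j\|^4]$ is uniformly bounded because $\nu^n_j = u_j - \mu^n_j$ and Sections \ref{sec: martingale_tightness_fast_mu}--\ref{sec: un_bound} bound the fourth moments of $u_j$ and $\mu^n_j$. Two applications of Jensen's inequality with the probability measure $a(j)/\sum_{r=k}^{\ell-1}a(r)$ then yield $E[\|I_1\|^4]\leq C(t^f(\ell)-t^f(k))^4$, which is $O((t^f(\ell)-t^f(k))^2)$ since the time horizon is bounded by $T+1$.

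For $I_2$, I decompose $S_j$ into its seven components. The leading-order terms are the clean martingale differences $-M^f_{j+1}$ and $\epsilon_j \nabla\lambda(y_j) M^s_{j+1}$; multiplied by the $\mathcal{F}_j$-measurable coefficient of size $O(\sqrt{a(j)})$, they produce true discrete martingale sums. Under \ref{assumption: martingale} the predictable quadratic variation is bounded deterministically by $K\sum_{j=k}^{\ell-1}a(j) = K(t^f(\ell)-t^f(k))$, and the jump-fourth-moment sum satisfies $\sum_{j=k}^{\ell-1} a(j)^2 \leq a(k)(t^f(\ell)-t^f(k)) \leq (t^f(\ell)-t^f(k))^2$ via the elementary observation $a(k) \leq \sum_{j=k}^{\ell-1}a(j)$; the discrete Burkholder--Rosenthal inequality then gives a fourth moment of $O((t^f(\ell)-t^f(k))^2)$. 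The Markov-noise terms $-\Delta^f_j(y_j)$ and $\epsilon_j\nabla\lambda(y_j)\Delta^s_j(x_j)$ are handled via the Poisson-equation decomposition \eqref{eqn: delta_decomp_fast}--\eqref{eqn: delta_decomp_slow}: the martingale piece $\zeta^{f/s}$ is treated as above; the telescoping piece $\tau^{f/s}_j - \tau^{f/s}_{j+1}$ is handled by Abel summation, whose boundary contribution is of order $O(\sqrt{a(k)}) = O(\sqrt{t^f(\ell)-t^f(k)})$ (with fourth moment $O((t^f(\ell)-t^f(k))^2)$) and whose interior weight-differences are of order $a(j)^{3/2}$ by \ref{assumption: alpha_beta}; and the residual $e^{f/s,\Delta}_j = O(a(j))$ via Lipschitz continuity of $V^{f/s}$ and $\|x_{j+1}-x_j\|\leq Ca(j)$. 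The remaining pieces $\epsilon_j\nabla\lambda(y_j)\og(x_j,y_j)$, $e_{j+1}/a(j)$ (estimated by \eqref{eqn: en_bound}), and $e^{f,T}_j = o(\|\beta_j\|) = o(\sqrt{a(j)}\|u_j\|)$ are each pointwise $O(a(j))$ after weighting by $\sqrt{a(j)}$, using $\epsilon_j\leq\sqrt{a(j)}$ from (A1) and the uniform $L^4$ bound on $\{u_j\}$; they contribute $O((t^f(\ell)-t^f(k))^4)$ to the fourth moment.

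The main obstacle is a structural one: the naive variation-of-constants representation of $\nu^n_\ell - \nu^n_k$ introduces a forward-product factor $\prod_{r=j+1}^{\ell-1} N_r$ multiplying $M^f_{j+1}$, which is $\mathcal{F}_{\ell-1}$-measurable but not $\mathcal{F}_j$-measurable and therefore destroys the martingale property required for Burkholder. The one-step-increment decomposition above preserves adaptedness of the coefficients multiplying the martingale differences, at the cost of the additional term $I_1$, which is tame thanks to the a priori $L^4$ bound on $\{\nu^n_j\}$. Assembling the contributions via Minkowski's inequality in $L^4$ and absorbing the $O((t^f(\ell)-t^f(k))^4)$ subleading terms into the leading $O((t^f(\ell)-t^f(k))^2)$ martingale contribution (using $(t^f(\ell)-t^f(k))^4 \leq (T+1)^2 (t^f(\ell)-t^f(k))^2$) yields the claim.
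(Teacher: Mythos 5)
Your proof is correct, but it takes a genuinely different route from the paper's. The paper works directly with the variation-of-constants representation in \eqref{eqn: first_scaled_iterate}: it writes $\nu^n_{k}$ as a sum of forward products $\Gamma^{f,\cdot}_{n,j+1}$ applied to the forcing terms, bounds the products uniformly by $e^{\olduck{const: D_xh}(T+1)}$, then attacks each forcing contribution with Jensen's inequality, Abel summation, and Lemma \ref{lem: lemma7.1}. You instead peel off a single-step recursion $\nu^n_{j+1} - \nu^n_j = (N_j - I)\nu^n_j + \sqrt{a(j)^2/a(j+1)}\,S_j$ and split $\nu^n_\ell - \nu^n_k = I_1 + I_2$, absorbing the product structure into $I_1$ (controlled by the a priori $L^4$ bound on $\{\nu^n_j\}$, which is not circular: it follows from $\nu^n_j = u_j - \mu^n_j$ and the uniform $L^4$ bounds already established in Sections \ref{sec: martingale_tightness_fast_mu} and \ref{sec: un_bound}) and leaving $I_2$ with deterministic coefficients $\sqrt{a(j)^2/a(j+1)}$ on each forcing term. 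The payoff of your decomposition is exactly what you identify in your last paragraph: the coefficient multiplying $M^f_{j+1}$ in $I_2$ is deterministic and a fortiori $\mathcal{F}_j$-measurable, so the Burkholder--Rosenthal step is clean, whereas the paper's $\Psi^{(1)}_{k,m}$ carries the $\Gamma^{f,m}_{n,j+1}$ factor inside the sum, which is $\mathcal{F}_m$-measurable rather than $\mathcal{F}_j$-measurable — the partial sums are not literally a martingale in $m$, and the paper invokes Lemma \ref{lem: lemma7.1} somewhat informally there, relying on the almost-sure boundedness of the products. Your version pays for this with the extra term $I_1$, which costs $O((t^f(\ell)-t^f(k))^4)$ and is harmlessly absorbed since the horizon is bounded by $T+1$. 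Both routes use the same ingredients for the Markov-noise terms (Poisson decomposition, Abel summation, Lipschitz bound on $V^{f/s}$) and the same elementary comparisons $\epsilon_j \leq \sqrt{a(j)}$, $\sqrt{a(j)/a(j+1)} - 1 = o(a(j))$ from (A1)--(A2); the final assembly via Minkowski in $L^4$ is standard. In short: equivalent strength of conclusion, but your increment-based decomposition is the more careful of the two in the martingale step, and it is the form in which this kind of tightness estimate is most robustly argued.
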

\begin{proof}
Recall that for $ 0 \leq i \leq m^f(n)-n$,
\begin{align*}
	\nu_{n+i}^n &=  \sum_{j=n}^{n+i-1} \sqrt{a(j)} \left (\prod_{k=j+1}^{n+i-1} \left( I+a(k) \nabla_x \oh(x_k, y_k)  \right) \sqrt{\frac{a(k)}{a(k+1)}} \right )\\& \hspace{4em} \Biggl( e^{f,T}_j + \epsilon_j\nabla \lambda(y_j) \og(x_j, y_j) - M^f_{j+1} + \epsilon_j \nabla \lambda(y_j)M^s_{j+1} + \frac{e_{j+1}}{a(j)} \\&\hspace{14em} + \epsilon_j \nabla \lambda(y_j)\Delta^s_j(x_j) - \Delta^f_j(y_j)   \Biggr) \sqrt{\frac{a(j)}{a(j+1)}}. \numberthis \label{eqn: vn}
\end{align*}
By \ref{assumption: alpha_beta}, $\sqrt{a(j)/a(j+1)}$ is uniformly bounded in $j$. Also, as $\oh$ is uniformly Lipschitz, $\nabla_x \oh(\cdot,\cdot)$ is uniformly bounded. As seen previously, for $n \leq k <\ell< m^f(n)$,
$$
\begin{aligned}
	\left\|\prod_{r=k+1}^{\ell} \left (1 + a(r)\nabla_x \oh(x_{r}, y_{r}) \right )\right\| &\leq \exp \left(\olduck{const: D_xh}\sum_{r=k+1}^{\ell} a(r) \right ) \\
	&\leq \exp \left(\olduck{const: D_xh}(T+1) \right ), 
\end{aligned}
$$
for a suitable bound $\olduck{const: D_xh}> 0$ on $\|\nabla_x \oh(\cdot,\cdot)\|$. We define
\begin{align*}
	\Gamma_{n,j}^{f,k} 	:= \prod_{r=j}^{k} \left(I+a(r) \nabla \oh(x_r, y_r)\right) \sqrt{\frac{a(r)}{a(r+1)}} .
\end{align*}
Recall that this quantity is bounded. Also, for some $\newuck\label{const: efT_fast}>0$,
$$
\begin{aligned}
	\| e^{f,T}_j \| \leq \olduck{const: efT_fast} \| \beta_j \|  \leq \sqrt{a(j)} \olduck{const: efT_fast} \| u_j \|,
\end{aligned}
$$
for a large enough $j$. Thus for large enough $j$,
\begin{align*}
	E \left[	\left\| 	\sum_{j=k+1}^{\ell} \sqrt{a(j)} 	\Gamma^{f,\ell}_{n,j+1} e^{f,T}_j \sqrt{\frac{a(j)}{a(j+1)}} \right\| ^4 \right] &\leq \newuck E \left [ \left( \sum_{j=k+1}^{\ell} a(j) \| u_j \|  \right)^4 \right ] \\
	&\leq \left(\sum_{j=k+1}^{\ell} a(j) \right)^3 \left(\sum_{j=k+1}^{\ell} a(j) E \left [\| u_j \|^4 \right ] \right) \\
	&\leq \left(\sum_{j=k+1}^{\ell} a(j) \right)^2,
\end{align*}
using Jensen's inequality and the result that the fourth moment of the sequence $\{u_n\}_{n \geq 1}$ is bounded. 

As $\lambda(\cdot)$ is uniformly Lipschitz, $\nabla \lambda (\cdot) $ is uniformly bounded. Thus, for some $\newuck\label{const: eta_j}>0$,
\begin{align*}
	\epsilon_j \og\left(x_j, y_j\right) \nabla \lambda\left(y_j \right ) &\leq \olduck{const: eta_j} \epsilon_j.
\end{align*}
Thus, 
$$
\begin{aligned}
	&\left\| 	\sum_{j=k+1}^{\ell} \sqrt{a(j)} \Gamma_{n,j+1}^{f,\ell}   \epsilon_j \og\left(x_j, y_j\right) \nabla \lambda\left(y_j \right ) \sqrt{\frac{a(j)}{a(j+1)}} \right\| \\
	&\leq \olduck{const: eta_j} e^{\olduck{const: D_xh}(T+1)}  \sum_{j=k+1}^{\ell} \sqrt{a(j)} \frac{b(j)}{a(j)} \leq \olduck{const: eta_j} e^{\olduck{const: D_xh}(T+1)} \sum_{j=k+1}^{\ell} a(j),
\end{aligned}
$$
by the assumption $b(j) \leq a(j)^{3/2}$. 

We need the following consequence of the Burkholder inequality.
\begin{lemma}[\cite{Borkar2008}, Lemma 7.1] \label{lem: lemma7.1}
	Let $\{X_n\}$ be a zero mean martingale w.r.t.\ the increasing
	$\sigma$-fields $\{\mathcal{F}_n\}$ with $X_0 = 0$ (say) and
	$\sup_{n\leq N}E\left[|X_n|^4\right] < \infty$. Let $Y_n
	\defn X_n - X_{n-1}, n \geq 1$. For a suitable constant $K > 0$,
	\begin{displaymath}
		E\left[\sup_{n\leq N}|X_n|^4\right] \leq K \left (\sum_{m=1}^NE\left[Y_m^4\right]  +
		E\left[(\sum_{m=1}^NE\left[Y_m^2|\mathcal{F}_{m-1}\right])^2\right] \right ).
	\end{displaymath}
\end{lemma} 

From \eqref{eqn: vn}, define
\begin{align*}
	\Psi_{k,m}^{(1)} = -\sum_{j=k+1}^{m} \sqrt{a(j)} \Gamma^{f,m}_{n,j+1} M^f_{j+1} \sqrt{\frac{a(j)}{a(j+1)}}. 
\end{align*}
Then by Lemma \ref{lem: lemma7.1}, we have that
\begin{align*}
	E \left[ \sup_{k+1 \leq m < \ell} \| \Psi^{(1)}_{k,m} \|^4 \right] \leq \newuck \left( \left( \sum_{j=k+1}^{\ell} a(j) \right)^2 + \sum_{j=k+1}^{\ell} a(j)^2 \right).
\end{align*}
Likewise, define 
\begin{align*}
	\Psi_{k,m}^{(2)} = \sum_{j=k+1}^{m} \sqrt{a(j)} \Gamma^{f,m}_{n,j+1} \epsilon_j \nabla \lambda (y_j) M_{j+1}^s \sqrt{\frac{a(j)}{a(j+1)}}. 
\end{align*}
By using $\epsilon_j \leq 1$ and the fact that $\nabla \lambda (y_j)$ is uniformly bounded, by Lemma \ref{lem: lemma7.1}, we have that,
\begin{align*}
	E \left[ \sup_{k+1 \leq m < \ell} \| \Psi_{k,m}^{(2)} \|^4 \right] \leq \newuck \left( \left( \sum_{j=k+1}^{\ell} a(j) \right)^2 + \sum_{j=k+1}^{\ell} a(j)^2 \right).
\end{align*}
Recall from \eqref{eqn: en_bound} that
\begin{align*}
	\left\| \frac{e_{k+1}}{a(k)}\right\|  &\leq \newuck \left(\epsilon_k b(k)+\epsilon_k b(k)\left\|M_{k+1}^s\right\|+\epsilon_k b(k)\left\|M_{k+1}^s\right\|^2\right).
\end{align*}
Thus,
\begin{align*}
	&E \left [ \left \| \sum_{j=k+1}^{\ell} \sqrt{a(j)} \Gamma^{f,m}_{n,j+1} \frac{e_{j+1}}{a(j)}\sqrt{\frac{a(j)}{a(j+1)}} \right \|^4 \right ] \\
	&= E \left [ \left \| \sum_{j=k+1}^{\ell} a(j) \Gamma^{f,m}_{n,j+1} \frac{e_{j+1}}{a(j)^{3/2}}\sqrt{\frac{a(j)}{a(j+1)}} \right \|^4 \right ] \\
	&\overset{(a)}{\leq} \left(\sum_{j=k+1}^{\ell} a(j)\right)^4 E \left [  \sum_{j=k+1}^{\ell} \frac{a(j)}{\left(\sum_{j=k+1}^{\ell} a(j)\right)} \left \|  \Gamma^{f,m}_{n,j+1} \frac{e_{j+1}}{a(j)^{3/2}} \sqrt{\frac{a(j)}{a(j+1)}}  \right \|^4  \right ] \\
	&= \left(\sum_{j=k+1}^{\ell} a(j)\right)^3 E \left [  \sum_{j=k+1}^{\ell} \frac{1}{a(j) } \left \|  \Gamma^{f,m}_{n,j+1} \frac{e_{j+1}}{a(j)} \sqrt{\frac{a(j)}{a(j+1)}}  \right \|^4  \right ] \\
	&\overset{(b)}{\leq} 27 \newuck T^3 E \left [  \sum_{j=k+1}^{\ell} b(j)^3 \epsilon_j^5  \right ] \\
	&\overset{(c)}{\leq} \newuck \left(\sum_{j=k+1}^{\ell} b(j)\right)^3 \\
	&\overset{(d)}{\leq} \newuck \left(\sum_{j=k+1}^{\ell} a(j)\right)^3,
\end{align*}
where $(a)$ follows from Jensen's inequality, $(b)$ follows from \ref{assumption: martingale} and the fact that $(a+b+c)^p \leq 3^{p-1}(a^p + b^p + c^p), p \geq 1$, $(c)$ follows from $\epsilon_j \leq 1$ and $\sum_{j=k+1}^{\ell} b(j)^3 \leq \left(\sum_{j=k+1}^{\ell} b(j)\right)^3$, and $(d)$ follows from $b(j) \leq a(j)$. 

As $\Delta^s_j(x_j) = g(x_j, y_j, Y_j) - \og(x_j, y_j)$, it follows from the boundedness of $g(\cdot,\cdot,\cdot)$ and Jensen's inequality that 
\begin{align*}
	&E \left [ \left \| \sum_{j=k+1}^{\ell} \sqrt{a(j)} \Gamma_{n,j+1}^{f,\ell} \epsilon_j \nabla \lambda(y_j) \Delta^s_j(x_j) \sqrt{\frac{a(j)}{a(j+1)}} \right \|^4 \right ] \\
	&\leq E \left [ \left ( \sum_{j=k+1}^{\ell} \sqrt{a(j)} \Gamma_{n,j+1}^{f,\ell} \epsilon_j \nabla \lambda(y_j) \left \|  \Delta^s_j(x_j)\right \| \sqrt{\frac{a(j)}{a(j+1)}} \right )^4 \right ] \\
	&\leq \newuck  E \left [ \left ( \sum_{j=k+1}^{\ell} \sqrt{a(j)} \epsilon_j \right )^4 \right ] \\
	&\leq \newuck \left(\sum_{j=k+1}^{\ell} a(j)\right)^4.
\end{align*}
Here, in the penultimate step, we used the assumption that $a(j)^{3/2} > b(j)$. 

The term with $\Delta^f_j(y_j)$ is the last one left to handle. Recall the decomposition for $\Delta^f_j(y_j)$ shown in \eqref{eqn: delta_decomp_fast}:
\begin{align}
	\Delta_j^f(y_j) &= \zeta_{j}^f(y_j) +\tau^f_{j}(y_j)-\tau^f_{j+1}(y_j)+e^{f,\Delta}_{j}(y_j).
\end{align}
We will now handle each of these terms. Recall that $\zeta^f_j(y_j)$ is a bounded martingale difference sequence. Using Lemma \ref{lem: lemma7.1} for 
\begin{align*}
	\Psi_{k,m}^{(3)} := \sum_{j=k+1}^{m} \sqrt{a(j)} \Gamma_{n,j+1}^{f,m} \zeta^{s}_{j}(y_j) \sqrt{\frac{a(j)}{a(j+1)}},
\end{align*}
we have that 
\begin{align*}
	E \left [ \underset{k+1 \leq m < \ell}{\sup} \| \Psi_{k,m} \|^4 \right ] &\leq \left(\sum_{j=k+1}^{\ell} a(j)\right)^2 + \sum_{j=k+1}^{\ell} a(j)^2 \\
	&\leq \newuck \left(\sum_{j=k+1}^{\ell} a(j)\right)^2 .
\end{align*}
Using a summation by parts argument, we have that
\begin{align*}
	&\sum_{j=k+1}^{\ell} \sqrt{a(j)} \Gamma_{n,j+1}^{f,\ell} \left(\tau^f_{j+1}(x_j) -  \tau_{j}^f(x_j)   \right) \sqrt{\frac{a(j)}{a(j+1)}} \\
	&= \sqrt{a(\ell)} \tau^f_{\ell+1}(x_{\ell}) \sqrt{\frac{a(\ell)}{a(\ell+1)}} - \sqrt{a(k+1)}\Gamma_{n,k+2}^{f,\ell} \tau^f_{k+1}(x_{k+1}) \sqrt{\frac{a(k+1)}{a(k+2)}} \\
	&+\sum_{j=k+2}^{\ell} \Gamma_{n,j+1}^{f,\ell} \tau_j^f(x_j) \sqrt{a(j)} a(j-1) \left( \nabla \of(y^n(t^f(j))) + \frac{1}{a(j)} - \frac{1}{a(j-1)} \right). \numberthis \label{eqn: sum_of_parts_fast}
\end{align*}
Thus,
$$
\begin{aligned}
	&\left \| \sum_{j=k+1}^{\ell} \sqrt{a(j)} \Gamma_{n,j+1}^{f,\ell} \left( \tau_{j}^f(x_j)  - \tau^f_{j+1}(x_j) \right) \sqrt{\frac{a(j)}{a(j+1)}} \right \| \\
	&\leq \newuck \left( \left \| \sqrt{a(\ell)} \tau^f_{\ell+1}(x_{\ell}) \right \|+ \left \|\sqrt{a(k+1)}\Gamma_{n,j+1}^{f,\ell} \tau^f_{k+1}(x_{k+1}) \right \| \right.\\
	&\left.+\left \| \sum_{j=k_1+2}^{\ell} \Gamma_{n,j+1}^{f,\ell} \tau_j^f(x_j) \sqrt{a(j)} a(j-1) \left( \nabla \of(y^n(t^f(j))) + \left (\frac{1}{a(j)} - \frac{1}{a(j-1)} \right )I \right) \right \| \right). 
\end{aligned}
$$
Using $(a+b+c)^4 \leq 16(a^4+b^4+c^4)$, \ref{assumption: alpha_beta},  and the fact that $V^f(\cdot, \cdot, \cdot)$ is bounded, 
\begin{align*}
	&E \left [ \left \| \sum_{j=k+1}^{\ell} \sqrt{a(j)} \Gamma_{n,j+1}^{f,\ell} \left( \tau_{j}^f(x_j)  - \tau^f_{j+1}(x_j) \right) \sqrt{\frac{a(j)}{a(j+1)}} \right \|^4 \right ] \\
	&\leq \newuck \left( a(\ell)^2 + a(k+1)^2 + \left(\sum_{j=k+2}^{\ell} \sqrt{a(j)}a(j-1)  \right)^4  \right) \\
	&\leq \newuck \left( \left(\sum_{j=k+1}^{\ell} a(j)\right)^2 +\left(\sum_{j=k+1}^{\ell} a(j)\right)^2 \left(\sum_{j=k+2}^{\ell} a(j-1)^2  \right)^2 \right) \\
	&\leq \newuck \left(\sum_{j=k+1}^{\ell} a(j)\right)^2.
\end{align*}
Here, the second last inequality follows from the Cauchy-Schwartz inequality and the last inequality follows from 
\begin{align*}
	\left(\sum_{j=k+2}^{\ell} a(j-1)^2  \right)^2  \leq \left(\sum_{j=k+2}^{\ell} a(j-1)  \right)^4 \leq T^4. 
\end{align*} 

Finally, as $V^f(\cdot, y, Y_{n+1})$ is Lipschitz,
$$
\begin{aligned}
	\left \| e^{f,\Delta}_{j}(y) \right \| &= \left \| V^f(x_{j+1}, y, Y_{j+1}) - V^f(x_j, y, Y_{j+1}) \right \| \\
	&\leq \newuck \left \| x_{j+1} - x_j \right \| \\
	&\leq\newuck a(j) \left \| h(x_j, y_j, Y_j) + M^f_{j+1}\right \|,
\end{aligned}
$$
Thus, by Jensen's inequality, the boundedness of $h(\cdot, \cdot, \cdot)$ and the fourth moments of $M^f_{j+1}$, we have that
\begin{align*}
	E \left [ \left \| \sum_{j=k+1}^{\ell} \sqrt{a(j)} \Gamma_{n,j+1}^{f,\ell} e^{f,\Delta}_{j}(x_j) \sqrt{\frac{a(j)}{a(j+1)}} \right \|^4 \right ] 
	&\leq \newuck T^3 \left(\sum_{j=k+1}^{\ell} a(j)^{3} \right)^4 \\
	&\leq \newuck \left(\sum_{j=k+1}^{\ell} a(j)\right)^3.
\end{align*}
Combining the above upper bounds for each term in \eqref{eqn: vn}, we have
$$
\begin{aligned}
	E \left[ \left \| \nu_{k} \right \|^4 \right] &\leq \newuck\label{const: first_nu_gronwall}\left( \sum_{j=n}^{k} a(j) \right)^2 ,
\end{aligned}
$$
for some constant $\olduck{const: first_nu_gronwall}$ which can depend on $T$.
Thus,
\begin{align*}
	E \left[ \left \| \nu_{\ell} - \nu_{k} \right \|^4 \right] 	&\leq \newuck \left( \sum_{j=k+1}^{\ell} a(j) \right)^2 \\
	&= O \left( \left( t^f(\ell) - t^f(k)   \right)^2 \right). 
\end{align*}
\end{proof}
Tightness follows from Lemma \ref{lem: Lemma7.4}.	 
\subsection{Tightness for the slow timescale} \label{sec: tightness_slow_martingale}
For $j \geq n$, recall \eqref{eqn: alphan}:
\begin{align}
	\alpha_{j+1}^n = \alpha_j^n + b(j) \left( \nabla \of(y^n(t^s(j))) \alpha_j^n + \delta_j +\Delta^s_j(x_j) +M^s_{j+1}+ e^{s,T}_j + p(x_j, y_j) \right), \label{eqn: alphan_1}
\end{align}
For $0 \leq i \leq m^s(n)-n$, as $\alpha^n_n=0$ by definition,
\begin{align*}
	\alpha^n_{n+i} &= \sum_{j=n}^{n+i-1} b(j) \prod_{k=j+1}^{n+i-1} \left( I+ b(k) \nabla \of( y^n(t^s(k)) ) \right) \left(\delta_j +\Delta^s_j(x_j) \right.\\&\hspace{18em}\left.+M^s_{j+1}+ e^{s,T}_j + p(x_j, y_j) \right), \\
	w^n_{n+i} &= \sum_{j=n}^{n+i-1} \frac{b(j)}{\sqrt{a(j)}} \prod_{k=j+1}^{n+i-1} \left( I+ b(k) \nabla f( y^n(t^s(k))) \right) \left( \delta_j +\Delta^s_j(x_j) \right.\\&\hspace{8em}\left.+M^s_{j+1}+ e^{s,T}_j + p(x_j, y_j) \right)  \sqrt{\frac{a(k)}{a(k+1)}} \sqrt{\frac{a(j)}{a(j+1)}} .
\end{align*}
As defined in Section \ref{sec: notation}, $w^n(t), t \in [t^s(n), t^s(n) + T],$ is such that $w^n(t^s(j))  = w^n_j,$ and linearly interpolated on each interval $[t^s(j),t^s(j+1)]$ for $n \leq j \leq m^s(n)$. Also, $\tilde{w}^n(t) = w^n(t(n)+t), t \in [0,T]$. Consider $\{\tilde{w}^n(\cdot)\}$ to be $C([0,T]; \R^{d_2} )$-valued random variables. 
\begin{lemma}\label{lem: slow_tightness}
	For $m^s(n) \geq \ell > k \geq n, n \geq 0$,
	\begin{equation*}
		E\left[\|w^n(t^s(\ell)) - w^n(t^s(k))\|^4\right]  =  O\left ( \left (\sum_{j=k}^{\ell}b(j)\right )^2 \right ) 
		=  O(|t^s(\ell) - t^s(k)|^2).
	\end{equation*}
\end{lemma}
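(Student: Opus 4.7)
The proof follows the template of Section \ref{sec: tightness_fast} applied to the slow iteration \eqref{eqn: alphan_1}. Set $B := \sum_{j=k}^{\ell-1} b(j) \leq T+1$, and starting from the explicit expansion of $w^n_{\ell} - w^n_{k}$, split the sum into five contributions coming from $\delta_j$, $\Delta^s_j(x_j)$, $M^s_{j+1}$, $e^{s,T}_j$, and $p(x_j,y_j)$. I will bound the fourth moment of each contribution by $O(B^2)$. The product factor of the form $\prod (I + b(k)\nabla\of(y^n(t^s(k))))\sqrt{a(k)/a(k+1)}$ appearing in the expansion is uniformly bounded using \ref{assumption: hg}, \ref{assumption: alpha_beta}, and $\sum_{j=n}^{m^s(n)-1} b(j) \leq T+1$. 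The crucial step-size inequality is that $b(j)\leq a(j)^{3/2}$ by (A1.4), which yields $b(j)/\sqrt{a(j)} \leq a(j) \leq 1$, and hence $\sum(b(j)/\sqrt{a(j)})^2 \leq \sum b(j)^2/a(j) \leq B$ and $\sum(b(j)/\sqrt{a(j)})^4 \leq B \cdot \max_j b(j) \leq B^2$.

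Each term is then handled as follows. The martingale contribution involving $M^s_{j+1}$ is controlled by Lemma \ref{lem: lemma7.1} applied to $Y_j=(b(j)/\sqrt{a(j)})\Gamma^{s,\cdot}_{\cdot,j+1}M^s_{j+1}$, yielding $O(B^2)$. The discretization $\delta_j = O(b(j))$ contributes deterministically a norm of order $\sum b(j)^2/\sqrt{a(j)} \leq \sum b(j) a(j) \leq B$, whose fourth power is $O(B^4) = O(B^2)$ since $B \leq T+1$. The coupling $p(x_j,y_j) = \og(x_j,y_j) - \og(\lambda(y_j),y_j)$ is $L$-Lipschitz in $\beta_j = \sqrt{a(j)}\,u_j$, giving a contribution bounded by $L\sum b(j)\|u_j\|$ whose fourth moment, by Jensen and the uniform fourth-moment bound on $\{u_j\}$ from Section \ref{sec: un_bound}, is $O(B^4) = O(B^2)$. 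For the Markov-noise term I substitute the decomposition \eqref{eqn: delta_decomp_slow}: the bounded martingale-difference piece $\zeta^s_j(x_j)$ is handled again by Lemma \ref{lem: lemma7.1}; the telescope $\tau^s_{j+1}-\tau^s_j$ is treated by summation by parts exactly as in \eqref{eqn: sum_of_parts_fast}, with boundary terms of order $b(\cdot)/\sqrt{a(\cdot)} \leq a(\cdot)$ (hence fourth moment $O(B^2)$) and interior coefficient increments $\Delta(b(j)/\sqrt{a(j)})$ controlled using \ref{assumption: alpha_beta} and \ref{assumption: bn_omega}; and the smoothness error $e^{s,\Delta}_j$ is of size $O(b(j))$ via the Lipschitz property of $V^s$ combined with boundedness of $g$ and $E[\|M^s\|^4]$.

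The only term whose bound depends on $E[\|w^n_j\|^4]$ itself is $e^{s,T}_j = o(\|\alpha^n_j\|) = o(\sqrt{a(j)}\|w^n_j\|)$, which contributes $C\sum b(j)\|w^n_j\|$ with fourth moment at most $B^3\sum b(j)\,E[\|w^n_j\|^4]$. To close the loop I first prove that $\sup_{n\leq j\leq m^s(n)} E[\|w^n_j\|^4]$ is uniformly bounded: specialising the above estimates to $w^n_\ell - w^n_n = w^n_\ell$ yields
\begin{equation*}
E[\|w^n_\ell\|^4] \leq C_1 + C_2\sum_{j=n}^{\ell-1} b(j)\,E[\|w^n_j\|^4],
\end{equation*}
and the discrete Gronwall inequality together with $\sum b(j)\leq T+1$ gives the desired uniform bound. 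Substituting back into the $e^{s,T}_j$ contribution then yields $O(B^4) = O(B^2)$, and assembling all five bounds proves the lemma; the extension from discrete points $t^s(k),t^s(\ell)$ to arbitrary $t,s \in [0,T]$ is immediate from the piecewise-linear interpolation. The main obstacle I anticipate is the summation-by-parts estimate for the $\tau^s$-telescope, where the mixed prefactor $b(j)/\sqrt{a(j)}$ forces careful use of the coupled step-size assumption \ref{assumption: bn_omega} to control $\Delta(b(j)/\sqrt{a(j)})$; the Gronwall closure itself is routine once these term-by-term bounds are in place.
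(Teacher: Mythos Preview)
Your proposal is correct and follows essentially the same approach as the paper's proof: bound the product factor $\Gamma^{s,\cdot}_{n,\cdot}$, treat each of the five contributions separately (Burkholder for $M^s$ and $\zeta^s$, summation by parts for the $\tau^s$-telescope, Lipschitz bounds for $\delta_j$, $e^{s,\Delta}_j$, and $p$ via $\|u_j\|$), and close with discrete Gronwall on the $e^{s,T}_j$ term. One small simplification you should note: the summation-by-parts obstacle you flag is milder than you expect---the paper groups terms so that the interior increment becomes $\frac{b(j)}{\sqrt{a(j)}}\,b(j-1)\bigl(\nabla\of(\cdot) + (\tfrac{1}{b(j)}-\tfrac{1}{b(j-1)})I\bigr)$, which is bounded directly by the existence of $\vartheta$ in \ref{assumption: alpha_beta}, so \ref{assumption: bn_omega} is not needed for this lemma.
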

The proof of this lemma can be found in Appendix \ref{sec: tightness_slow}. Tightness follows from Lemma \ref{lem: Lemma7.4}.

\subsection{Characterizing the limit} \label{sec: limit}
As proved in the previous section, the joint laws of $\{\tilde{u}^n(\cdot), \tilde{w}^n(\cdot), \tilde{y}^n(\cdot) \}$ are tight in $\mathcal{P}(C([0,T]; \mathbb{R}^d))^3$. We take a subsequence of $\{\tilde{u}^n(\cdot), \tilde{w}^n(\cdot), \tilde{y}^n(\cdot) \}$ (which we denote using the same notation) that converges in law to a limit $\{u^*(\cdot), w^*(\cdot), y^*(\cdot) \}$. Using Skorohod's theorem, without loss of generality we have that there is a sequence of random variables  (which we again denote using the same notation) on some probability space, with identical laws as those of $\{\tilde{u}^n(\cdot), \tilde{w}^n(\cdot), \tilde{y}^n(\cdot) \}$, that converges almost surely and the limiting random variable has the same law as that of $\{u^*(\cdot),w^*(\cdot), y^*(\cdot) \}$, i.e.,
\begin{align*}
	\left( \tilde{u}^n(\cdot), \tilde{w}^n(\cdot), \tilde{y}^n(\cdot) \right) \rightarrow \left(u^*(\cdot),w^*(\cdot), y^*(\cdot) \right) \ \text{a.s.}   
\end{align*}
As the trajectories of the o.d.e.\ $\dot{y}(t) = g(\lambda(y(t)), y(t))$ form a closed set in $C([0,T]; \mathbb{R}^d)$, $y^*(\cdot)$ also satisfies this o.d.e.\ and lies in an internally chain transitive invariant set of this o.d.e.\ by Theorem \ref{thm: as_limit}. In Sections \ref{sec: limit_slow_martingale} and \ref{sec: limit_fast_martingale}, we characterize $w^*(\cdot)$ and $u^*(\cdot)$ respectively.
\subsection{Characterizing the limit for the slow timescale} \label{sec: limit_slow_martingale}
For $j \geq n$, recall \eqref{eqn: alphan}:
\begin{align*}
	\alpha_{j+1}^n = \alpha_j^n + b(j) \left( \nabla \of(y^n(t^s(j))) \alpha_j^n + \delta_j +\Delta^s_j(x_j) +M^s_{j+1}+ e^{s,T}_j + p(x_j, y_j) \right).
\end{align*}
Using the Taylor expansion for $\og$, 
\begin{align*}
	p(x_j, y_j) &= \og(x_j,y_j) - \og(\lambda(y_j),y_j)  \\
	&= \nabla_x \og(\lambda(y_j), y_j) \left(x_j - \lambda(y_j) \right) + e^{s,T,p}_j,
\end{align*}
where $e^{s,T,p}_j = o(\| x_j - \lambda(y_j)\| )$. Thus, for $n \leq j \leq m^s(n)$, \eqref{eqn: alphan} is equivalent to:
\begin{align*}
	w_{j+1}^n &=\sqrt{\frac{a(j)}{a(j+1)}} w_j^n + b(j) \nabla \of(y^n(t^s(j)))  \sqrt{\frac{a(j)}{a(j+1)}} w_j^n + \frac{b(j)}{\sqrt{a(j+1)}} \Delta^s_j(x_j)\\&+ \sqrt{b(j)} \sqrt{\frac{a(j)}{a(j+1)}} \sqrt{\frac{b(j)}{a(j)}} M^s_{j+1} 	+ b(j) \sqrt{\frac{a(j)}{a(j+1)}} { \nabla_x \og(\lambda(y_j), y_j) u_j}+ o(b(j)) .
\end{align*}
Iterating the above equation for $n \leq j<m^s(n)$ gives:
\begin{multline}
	w_{j+1}^n = w_n^n + \sum_{k=n}^j \left( \sqrt{\frac{a(k)}{a(k+1)}} -1 \right) w_k^n + \sum_{k=n}^{j}  b(k) \nabla \of(y^n(t^s(k))) \sqrt{\frac{a(k)}{a(k+1)}} w_k^n \\
	+ \sum_{k=n}^{j}\frac{b(k)}{\sqrt{a(k+1)}} \Delta^s_k(x_k)+ \sum_{k=n}^{j} \sqrt{b(k)} \sqrt{\frac{a(k)}{a(k+1)}} \sqrt{\frac{b(k)}{a(k)}} M_{k+1}^s \\+ \sum_{k=n}^j b(k) \sqrt{\frac{a(k)}{a(k+1)}} {\nabla_x \og(\lambda(y_k), y_k) u_k} + o(1). \label{eqn: slow_limit_iterative_eqn}
\end{multline}
Define
\begin{align}
	c(t) &= \sqrt{\frac{a(i)}{a(i+1)}}, t \in [ t^s(i), t^s(i+1) ), i \geq 0, \\
	d_m^n &= \sum_{i=n}^m \left( \sqrt{\frac{a(i)}{a(i+1)}} -1 \right) w_i^n. 
\end{align}
We also have that for $n\leq m < m^s(n)$ (cf.\ \cite{Borkar2008}, Chapter 8)
\begin{align*}
	d_m^n &= \sum_{i=n}^m \left(  \frac{1}{2} \left (\frac{a(i)}{a(i+1)} -1  \right ) + O\left(  \left (\frac{a(i)}{a(i+1)} -1  \right )^2 \right) \right)w_i^n \numberthis\label{eqn: slow_drift} \\
	&= \sum_{i=n}^m \left(  \frac{b(i)}{2} \frac{a(i)}{b(i)} \left (\frac{1}{a(i+1)} -\frac{1}{a(i)}  \right ) + O\left(  \left (\frac{a(i)}{a(i+1)} -1  \right )^2 \right) \right)w_i^n.
\end{align*}
This can be shown to converge to 0 using \ref{assumption: bn_omega}. 
Thus,
\begin{multline*}
	w^n(t^s(j+1)) = w^n(t(n))  + \int_{t^s(n)}^{t^s(j+1)} \nabla \of(y^n(y)) w^n(y) c(y) dy \\
	+\sum_{k=n}^{j}\frac{b(k)}{\sqrt{a(k+1)}} \Delta^s_k(x_k)+ \sum_{k=n}^{j} \sqrt{b(k)} \sqrt{\frac{a(k)}{a(k+1)}} \sqrt{\frac{b(k)}{a(k)}} M_{k+1}^s \\+ \sum_{k=n}^j b(k) \sqrt{\frac{a(k)}{a(k+1)}} {\nabla_x \og(\lambda(y_k), y_k)u_k} + o(1).
\end{multline*}

Using the decomposition for $\Delta^s_k(x_k)$ from \eqref{eqn: delta_decomp_slow}, 
\begin{align*}
	&\sum_{k=n}^j \frac{b(k)}{\sqrt{a(k+1)}}\Delta^s_k(x_k) = \sum_{k=n}^j \sqrt{b(k)}\sqrt{\frac{b(k)}{a(k)}} \frac{\sqrt{a(k)}}{\sqrt{a(k+1)}}\zeta^s_k(x_k) \\&\hspace{8em}+\sum_{k=n}^j \frac{b(k)}{\sqrt{a(k+1)}} \left(\tau^s_k - \tau^s_{k+1}\right) + \sum_{k=n}^j b(k) \sqrt{\frac{a(k)}{a(k+1)}} \frac{e^{s,\Delta}_k(x_k)}{\sqrt{a(k)}}.  
\end{align*}
Using a summation by parts argument, we have that
\begin{align*}
	&\sum_{k=n}^j b(k)\sqrt{\frac{a(k)}{a(k+1)}}\frac{\left(\tau^s_{k+1} - \tau^s_{k}\right)}{\sqrt{a(k)}}  \\
	&= \frac{b(j)}{\sqrt{a(j)}}\sqrt{\frac{a(j)}{a(j+1)}} \tau^s_{j+1} - \frac{b(n)}{\sqrt{a(n)}}\sqrt{\frac{a(n)}{a(n+1)}} \tau^s_{n}  \\&\hspace{16em}- \sum_{k=n+1}^j  \tau^s_k \left( \frac{b(k)}{\sqrt{a(k+1)}} - \frac{b(k-1)}{\sqrt{a(k)}}\right) \\
	&= \epsilon_j \sqrt{a(j)} \sqrt{\frac{a(j)}{a(j+1)}} \tau^s_{j+1}  - \epsilon_n \sqrt{a(n)} \sqrt{\frac{a(n)}{a(n+1)}} \tau^s_{n} \\&\hspace{16em}- \sum_{k=n+1}^j \frac{b(k)}{\sqrt{a(k)}} \tau^s_k \left( \sqrt{\frac{a(k)}{a(k+1)}} - \frac{b(k-1)}{b(k)}\right).
\end{align*}
Thus,
\begin{align*}
	&E \left[ \left \| \sum_{k=n}^j b(k)\sqrt{\frac{a(k)}{a(k+1)}}\frac{\left(\tau^s_k - \tau^s_{k+1}\right)}{\sqrt{a(k)}} \right \|^2 \right] \\
	&\leq \newuck \left (a(j) \epsilon_j^2 + a(n) \epsilon_n^2 + E \left [ \left (\sum_{k=n+1}^j \frac{b(k)}{\sqrt{a(k)}}  \| \tau^s_k\|  \left| \sqrt{\frac{a(k)}{a(k+1)}} - \frac{b(k-1)}{b(k)}\right | \right ) ^2 \right ] \right )\\
	&\overset{(a)}{\leq} \newuck \left (\underset{k \geq n}{\sup} \ a(k)\epsilon_k^2 + T \sum_{k=n+1}^j b(k) \left (\frac{1}{\sqrt{a(k)}} \right )^2  E \left [\| \tau^s_j\|^2 \right ] \left | \sqrt{\frac{a(k)}{a(k+1)}} - \frac{b(k-1)}{b(k)}\right |^2   \right )\\
	&\overset{(b)}{\leq} \newuck \ \underset{k \geq n}{\sup} \ a(k)\epsilon_k^2 + \newuck \ \underset{k \geq n}{\sup} \ \frac{1}{a(k)} \left | \sqrt{\frac{a(k)}{a(k+1)}} - \frac{b(k-1)}{b(k)}\right |^2   
	\overset{n \uparrow \infty}{\rightarrow} 0, \numberthis \label{eqn: sum_of_parts_fast_limit}
\end{align*}
where $(a)$ follows from Jensen's inequality and $(b)$ follows from $\tau^s_j$ being bounded. The last step follows because $a(k) \overset{k \uparrow \infty}{\rightarrow} 0$ and
\begin{align*}
	&\frac{1}{a(k)}\left | \sqrt{\frac{a(k)}{a(k+1)}} - \frac{b(k-1)}{b(k)}\right |^2 \\
	&= \frac{1}{a(k)}\left | 1 + \frac{1}{2} \left ( \frac{a(k)}{a(k+1)} -1 \right ) + O \left( \left ( \frac{a(k)}{a(k+1)} -1 \right )^2 \right) - \frac{b(k-1)}{b(k)}\right |^2 \\
	&= \frac{1}{a(k)}\left | \frac{a(k)}{2} \left ( \frac{1}{a(k+1)} - \frac{1}{a(k)} \right ) + O \left( \left ( \frac{a(k)}{a(k+1)} -1 \right )^2 \right) +  b(k-1) \left ( \frac{1}{b(k-1)}- \frac{1}{b(k)} \right ) \right |^2 \\
	&\overset{(a)}{\leq} \frac{1}{a(k)}\left | \frac{a(k-1)}{2} \left ( \frac{1}{a(k+1)} - \frac{1}{a(k)} \right ) + O \left( a(k)^2 \left ( \frac{1}{a(k+1)} - \frac{1}{a(k)} \right )^2 \right) \right.\\&\hspace{20em}\left.- b(k-1) \left ( \frac{1}{b(k)} - \frac{1}{b(k-1)} \right ) \right |^2 \\ 
	&= \frac{a(k-1)^2}{a(k)}\left | \frac{1}{2} \left ( \frac{1}{a(k+1)} - \frac{1}{a(k)} \right ) + O \left( a(k) \left ( \frac{1}{a(k+1)} - \frac{1}{a(k)} \right )^2 \right) - \epsilon_{k-1} \left ( \frac{1}{b(k)} - \frac{1}{b(k-1)} \right ) \right |^2 \\  
	&\overset{(b)}{=} \frac{a(k-1)^2}{a(k)} \left | \frac{1}{2} \left ( \varphi + o(1) \right ) + O \left( a(k) \left ( \frac{1}{a(k+1)} - \frac{1}{a(k)} \right )^2 \right) -  \epsilon_{k-1}\left ( \vartheta + o(1)\right ) \right |^2 \\   
	&\overset{(c)}{\leq} a(k-1) \left | \frac{1}{2} \left ( \varphi + o(1) \right ) + O \left( a(k) \left ( \frac{1}{a(k+1)} - \frac{1}{a(k)} \right )^2 \right) -   \epsilon_{k-1}\left ( \vartheta + o(1)\right ) \right |^2 \overset{k \uparrow \infty}{\rightarrow} 0, 
\end{align*}
where $(a)$ follows from \ref{assumption: an_decreasing}, and $(b)$ and $(c)$ follow from \ref{assumption: alpha_beta}. 

Next, using the Lipschitz property of $V^s(\cdot,\cdot,\cdot)$, 
\begin{align*}
	\left \| e^{s,\Delta}_k(x_k) \right \| &\leq \newuck \| y_{k+1} - y_j \| \\
	&\leq b(k) \left \| g(x_k, y_k, Y_k) + M^s_{k+1} \right \|. 
\end{align*}
Using the above and Jensen's inequality: 
\begin{align*}
	&E \left[ \left \| \sum_{k=n}^j b(k)\sqrt{\frac{a(k)}{a(k+1)}}\frac{e^{s,\Delta}_k(x_k)}{\sqrt{a(k)}} \right \|^2 \right] \\
	&\leq T \sum_{k=n}^j b(k) \frac{a(k)}{a(k+1)} E \left [ \left \| \frac{e^{s,\Delta}_k(x_k)}{\sqrt{a(k)}}  \right \|^2 \right ] \\
	&\leq \newuck \sum_{k=n}^j b(k) \epsilon_k b(k) \leq \ \newuck \underset{k \geq n}{\sup} \ \epsilon_k b(k) \overset{n \uparrow \infty}{\rightarrow} 0. 
\end{align*}
Finally, note that $\xi^s_k := \zeta^s_k(x_k)+M^s_{k+1}$ is a martingale difference sequence as both terms on the right are, with respect to a common same filtration. We define, for $k \geq0$,
\begin{align*}
	Q_s(x_k, y_k) &= E \left[\xi^s_k {\xi^s_k}^{\intercal} | x_i, y_i, Y_i, i \leq k \right].
\end{align*}

It follows from \ref{assumption: martingale_markov_cond_independence}, \ref{assumption: martingale} and $V^s(\cdot, \cdot, \cdot)$ being Lipschitz that $Q_s(\cdot, \cdot)$ is Lipschitz. Fix $t, s \in [0,T]$ such that $t>s$ and let $\tilde{g} \in C_b(C([0,s]; \mathbb{R}^d)^3)$. As $\{\xi_k^s\}$ is a martingale difference sequence, 
\begin{multline*}
	\Biggl \| E \Biggl[ \left( \tilde{w}^n(t) - \tilde{w}^n(s) \right.\\ \left.- \int_{s}^{t}\left(c(t(n)+y) \left( \nabla \of(\tilde{y}^n(y))\tilde{w}^n(y) +\nabla_x \og(\lambda(\tilde{y}^n(y)),\tilde{y}^n(y)) \tilde{u}^n(y)\right)dy  \right)  \right)   \\ \times \tilde{g}\left(\tilde{y}^n([0,s]), \tilde{w}^n([0,s]), \tilde{u}^n([0,s])\right ) \Biggr] \Biggr \| = o(1).
\end{multline*}
Taking the limit $ n \rightarrow \infty$, we get 
\begin{multline*}
	\Biggl \| E \Biggl[ \left( w^*(t) - w^*(s) - \int_{s}^{t} \left( \nabla \of(y^*(y))w^*(y) + \nabla_x \og(\lambda(y^*(y)),y^*(y)) u^*(y) \right) dy \right)   \\ \times g\left(w^*([0,s]), y^*([0,s]), u^*([0,s])\right ) \Biggr] \Biggr \| = 0.
\end{multline*}
It follows from a standard monotone class argument that 
\begin{align*}
	w^*(t)  - \int_{0}^{t} \left( \nabla \of(y^*(y))w^*(y) +\nabla_x \og(\lambda(y^*(y)),y^*(y)) u^*(y)  \right) dy
\end{align*}
is a martingale with respect to the filtration  given by the completion of $\cap_{r \geq t} \sigma(w^*(y), y^*(y), u^*(y), y \leq r)$ for $r \geq 0$. For $t \in [0,T]$, define $\Sigma_s^n(t)$ by 
\begin{align*}
	\Sigma^n_s(t^s(j) - t^s(n)) = \sum_{k=n}^j b(k) c(t^s(k))^2 \epsilon_k Q_s(\lambda(y^n(t^s(k))),y^n(t^s(k))),
\end{align*}
for $n \leq j \leq m^s(n)$ and linear interpolation on each interval $[t^s(j)-t^s(n),t^s(j+1)-t^s(n)]$. Then
\begin{align*}
	\sum_{k=n}^{j} b(k) \epsilon_k c(t^s(k))^2 \xi_{k}\xi_{k}^{\intercal} - \Sigma^n_s(t^s(j) - t^s(n)),
\end{align*}
is a martingale. Thus, for $t \in [0,T]$ and 
\begin{align*}
	q_s^n(t) := \tilde{w}^n(t) - \int_{0}^{t} c(t(n)+y) \left( \nabla \of(\tilde{y}^n(y))\tilde{w}^n(y) +\nabla_x \og(\lambda(\tilde{y}^n(y)),\tilde{y}^n(y)) \tilde{u}^n(y)  \right) dy,
\end{align*}
we have that 
\begin{align*}
	&\left \| E \left[ q_s^n(t)q_s^n(t)^{\intercal} - q^n_s(s)q^n_s(s)^{\intercal} - \left( \Sigma_s^n(t) - \Sigma_s^n(s) \right) \right.\right. \\&\hspace{10em}\left.\left. \times g(\tilde{w}([0,s]), \tilde{y}([0,s]), \tilde{u}^n([0,s]) ) \right] \right \| = o(1). 
\end{align*}
Taking the limit $n \rightarrow \infty$, we have that 
\begin{eqnarray*}
	&&\left( 	w^*(t)  - \int_{0}^{t} \left(  \nabla\of(y^*(y))w^*(y) + \nabla_x \og(\lambda(y^*(y)),y^*(y)) u^*(y)  \right) dy  \right) \\ 
	&\times&  \left( 	w^*(t)  - \int_{0}^{t} \left(  \nabla \of(y^*(y))w^*(y) + \nabla_x \og(\lambda(y^*(y)),y^*(y)) u^*(y) \right) dy \right)^{\intercal}
\end{eqnarray*}
is a martingale, i.e., the quadratic variation process of the martingale 
\begin{align*}
	w^*(t)  - \int_{0}^{t} \left( \nabla \of(y^*(y))w^*(y) + \nabla_x \og(\lambda(y^*(y)),y^*(y)) u^*(y)  \right) dy,
\end{align*}
is zero, which, in view of the continuity of paths,  implies that this martingale is identically zero. We thus have that
\begin{align*}
	w^*(t)  = \int_{0}^{t} \left( \nabla  \of(y^*(y))w^*(y) + \nabla_x \og(\lambda(y^*(y)),y^*(y)) u^*(y)  \right) dy.
\end{align*}
Note that this is an ordinary differential equation, i.e., $w^*(\cdot)$ is a deterministic trajectory. 

\subsection{Characterizing the limit for the fast timescale} \label{sec: limit_fast_martingale}

In this section, we reintroduce the quantity $\varphi$ from Assumption \ref{assumption: alpha_beta},  although we have asumed this quantity to be zero thus far. This is with an eye on the next section, Section \ref{sec: alpha}, which  specifically analyzes the case where it is not. The use of $\varphi$ here helps facilitate the claims there which depend on the calculations of this section. 

Recall \eqref{eqn: betan_eqn} from Section \ref{sec: prelims_iterative_eqns}. For $n \geq 0$:
\begin{align*}
	\beta_{n+1} &=\beta_n + a(n) \left( \nabla_x \oh(x_n, y_n ) \beta_n + \epsilon_n\nabla \lambda(y_n) \og(x_n, y_n) + \epsilon_n \nabla \lambda(y_n)M^s_{n+1} \right.\\&\hspace{10em}\left.+ \epsilon_n \nabla \lambda(y_n)\Delta^s_n(x_n) + \frac{e_{n+1}}{a(n)} - \Delta^f_n(y_n) - M^f_{n+1}+ e^{f,T}_n \right). 
\end{align*}
This is equivalent to, for $n \leq j \leq m^f(n)$,
\begin{multline*}
	u_{j+1} =\sqrt{\frac{a(j)}{a(j+1)}} u_j + a(j) \nabla_x \oh(x_j, y_j) \sqrt{\frac{a(j)}{a(j+1)}} u_j - \sqrt{a(j)} \sqrt{\frac{a(j)}{a(j+1)}}  M_{k+1}^f \\- \sqrt{a(j)} \sqrt{\frac{a(j)}{a(j+1)}} \Delta^f_j(y_j)
	+ a(j) \sqrt{\frac{a(j)}{a(j+1)}} \frac{b(j)}{a(j)^{3/2}}\nabla \lambda(y_j) M_{j+1}^s\\+ a(j) \sqrt{\frac{a(j)}{a(j+1)}} \frac{b(j)}{a(j)^{3/2}}\nabla \lambda(y_j)\Delta^s_j(x_j)+a(j)\sqrt{\frac{a(j)}{a(j+1)}} \frac{e_{j+1}}{a(j)^{3/2}}+ o(a(j)).
\end{multline*}
Next, we iterate the above equation. Using the Lipschitzness of $\nabla_x \oh(\cdot, \cdot)$ and  the result that $x_k - \lambda(y_k) \overset{k \uparrow \infty}{\rightarrow} 0$, we have that
\begin{multline*}
	u_{j+1} = u_n + \sum_{k=n}^j \left( \sqrt{\frac{a(k)}{a(k+1)}} -1 \right) u_k + \sum_{k=n}^{j}  a(k) \nabla_x \oh(\lambda(y_k), y_k) \sqrt{\frac{a(k)}{a(k+1)}} u_k \\- \sum_{k=n}^j \sqrt{a(k)} \sqrt{\frac{a(k)}{a(k+1)}}  M_{k+1}^f 
	+ \sum_{k=n}^j a(k) \sqrt{\frac{a(k)}{a(k+1)}} \frac{b(k)}{a(k)^{3/2}}\nabla \lambda(y_k) M_{k+1}^s \\ - \sum_{k=n}^j \sqrt{a(k)} \sqrt{\frac{a(k)}{a(k+1)}} \Delta^f_k(y_k) + \sum_{k=n}^j a(k) \sqrt{\frac{a(k)}{a(k+1)}} \frac{b(k)}{a(k)^{3/2}}\nabla \lambda(y_k)\Delta^s_k(x_k)
	\\+ \sum_{k=n}^j a(k) \sqrt{\frac{a(k)}{a(k+1)}} \frac{e_{k+1}}{a(k)^{3/2}} + o(1).
\end{multline*}
Define
\begin{align*}
	c(t) &= \sqrt{\frac{a(i)}{a(i+1)}}, t \in [ t^f(i), t^f(i+1) ), i \geq 0, \\
	d_m^n &= \sum_{i=n}^m \left( \sqrt{\frac{a(i)}{a(i+1)}} -1 \right) u_i.
\end{align*}
For $m \geq n$,
\begin{align*}
	d_m^n &= \sum_{i=n}^{m} a(i) \left( \frac{\varphi}{2} + o(1)  \right)u_i.
\end{align*}
Thus
\begin{multline*}
	u(t^f(j+1)) = u(t^f(n)) + d_{j}^n - d_n^n + \int_{t^f(n)}^{t^f(j+1)} \nabla_x \oh(\lambda(\tilde{y}^n(y)), \tilde{y}^n(y ) )  \tilde{u}^n(y) c(y) dy \\
	- \sum_{k=n}^j \sqrt{a(k)} \sqrt{\frac{a(k)}{a(k+1)}}  M_{k+1}^f 
	+ \sum_{k=n}^j a(k) \sqrt{\frac{a(k)}{a(k+1)}} \frac{b(k)}{a(k)^{3/2}}\nabla \lambda(y_k) M_{k+1}^s \\ - \sum_{k=n}^j \sqrt{a(k)} \sqrt{\frac{a(k)}{a(k+1)}} \Delta^f_k(y_k) + \sum_{k=n}^j a(k) \sqrt{\frac{a(k)}{a(k+1)}} \frac{b(k)}{a(k)^{3/2}}\nabla \lambda(y_k)\Delta^s_k(x_k)\\
	+ \sum_{k=n}^j a(k) \sqrt{\frac{a(k)}{a(k+1)}} \frac{e_{k+1}}{a(k)^{3/2}} + o(1).
\end{multline*}
Now,
\begin{align*}
	&E \left[ \left \| \sum_{k=n}^j a(k) \sqrt{\frac{a(k)}{a(k+1)}} \frac{b(k)}{a(k)^{3/2}}\nabla \lambda(y_k) M_{k+1}^s \right \|^2 \right] \\
	&\leq 	E \left[  \left ( \sum_{k=n}^j a(k) \sqrt{\frac{a(k)}{a(k+1)}} \frac{b(k)}{a(k)^{3/2}} \left \| \nabla \lambda(y_k) M_{k+1}^s \right \| \right )^2 \right] \\
	&\leq 	\left(\sum_{k=n}^j a(k) \right) E \left[  \sum_{k=n}^j a(k) \frac{a(k)}{a(k+1)} \left ( \frac{b(k)}{a(k)^{3/2}} \right )^2 \left \| \nabla \lambda(y_k) M_{k+1}^s \right \|^2  \right] \\
	&\leq \newuck \ \underset{k \geq n}{\sup} \left ( \frac{b(k)}{a(k)^{3/2}} \right )^2 \overset{n \uparrow \infty}{\rightarrow} 0,
\end{align*}
using Jensen's inequality and the fact that the second moment of $M^s_{k+1}$ is bounded. Similarly,
\begin{align*}
	E \left[ \left \| \sum_{k=n}^j a(k) \sqrt{\frac{a(k)}{a(k+1)}} \frac{e_{k+1}}{a(k)^{3/2}}  \right \|^2 \right] &\leq 	\left(\sum_{k=n}^j a(k) \right) E \left[ \sum_{k=n}^j a(k) \frac{a(k)}{a(k+1)}  \left \| \frac{e_{k+1}}{a(k)^{3/2}}  \right \|^2 \right] \\
	&\leq \newuck \ \underset{k \geq n}{\sup} \ \epsilon_k^4 a(k)^2   \overset{n \uparrow \infty}{\rightarrow} 0,
\end{align*}
using \eqref{eqn: en_bound}, the boundedness of the second moment of $M^s_{k+1}$ and $b(k) < a(k)^{3/2}$.

Using the boundedness of $\Delta^s_j(x_j)$ and Jensen's inequality,
$$
\begin{aligned}
	E \left[ \left \| \sum_{k=n}^j \sqrt{a(k)}\sqrt{\frac{a(k)}{a(k+1)}}\epsilon_k \nabla \lambda(y_k)\Delta^s_k(x_k)  \right \|^2 \right] &\leq \newuck\label{const: fast_delta_s_ub} \sum_{k=n}^j a(k) \frac{\epsilon_k^2}{a(k)} \\
	&= \olduck{const: fast_delta_s_ub} \sum_{k=n}^j a(k) \left (\frac{b(k)}{a(k)^{3/2}} \right )^2
	\overset{n \uparrow \infty}{\rightarrow} 0. 
\end{aligned}
$$
Using the decomposition for $\Delta^f_j(y_j)$ from \eqref{eqn: delta_decomp_fast}, 
\begin{multline*}
	\sum_{k=n}^j \frac{a(k)}{\sqrt{a(k+1)}}\Delta^f_k(y_k) = \sum_{k=n}^j \sqrt{a(k)} \frac{\sqrt{a(k)}}{\sqrt{a(k+1)}}\zeta^f_k(y_k) \\+\sum_{k=n}^j \frac{a(k)}{\sqrt{a(k+1)}} \left(\tau^f_k - \tau^f_{k+1}\right) + \sum_{k=n}^j a(k) \sqrt{\frac{a(k)}{a(k+1)}} \frac{e^{f,\Delta}_k(y_k)}{\sqrt{a(k)}}. 
\end{multline*}
Using a similar summation by parts argument as in \eqref{eqn: sum_of_parts_fast_limit}: 
\begin{align*}
	&\sum_{k=n}^j \frac{a(k)}{\sqrt{a(k+1)}}\left(\tau^f_{k+1} - \tau^f_{k}\right)  \\
	&= \frac{a(j)}{\sqrt{a(j+1)}} \tau^f_{j+1} - \frac{a(n)}{\sqrt{a(n+1)}} \tau^f_{n} - \sum_{k=n+1}^j  \tau^f_k \left( \frac{a(k)}{\sqrt{a(k+1)}} - \frac{a(k-1)}{\sqrt{a(k)}}\right) \\
	&=  \sqrt{a(j)} \sqrt{\frac{a(j)}{a(j+1)}} \tau^f_{j+1}  - \sqrt{a(n)} \sqrt{\frac{a(n)}{a(n+1)}} \tau^f_{n} \\&\hspace{16em}- \sum_{k=n+1}^j \sqrt{a(k)} \tau^f_k \left( \sqrt{\frac{a(k)}{a(k+1)}} - \frac{a(k-1)}{a(k)}\right).
\end{align*}
Thus,
\begin{align*}
	&E \left[ \left \| \sum_{k=n}^j \frac{a(k)}{\sqrt{a(k+1)}}\left(\tau^f_{k+1} - \tau^f_{k}\right)  \right \|^2 \right] \\
	&\leq \newuck \left (a(j)  + a(n)+ E \left [ \left (\sum_{k=n+1}^ja(k) \frac{1}{\sqrt{a(k)}} \| \tau^f_k\|  \left| \sqrt{\frac{a(k)}{a(k+1)}} - \frac{a(k-1)}{a(k)}\right | \right ) ^2 \right ] \right )\\
	&\overset{(a)}{\leq} \newuck \left (\underset{k \geq n}{\sup} \ a(k) + T \sum_{k=n+1}^j a(k) \left ( \frac{1}{\sqrt{a(k)}} \right )^2   E \left [\| \tau^f_j\|^2 \right ] \left | \sqrt{\frac{a(k)}{a(k+1)}} - \frac{a(k-1)}{a(k)}\right |^2   \right )\\
	&\overset{(b)}{\leq} \newuck \ \underset{k \geq n}{\sup} \ a(k)  + \newuck \ \underset{k \geq n}{\sup} \ \frac{1}{a(k)}\left | \sqrt{\frac{a(k)}{a(k+1)}} - \frac{a(k-1)}{a(k)}\right |^2
	\overset{n \uparrow \infty}{\rightarrow} 0, 
\end{align*}
where $(a)$ follows from Jensen's inequality and $(b)$ follows from $\tau^f_j$ being bounded and the fact that $\sum_{n+1}^{j} a(k) \leq T$. The last step follows because $a(k) \overset{k \uparrow \infty}{\rightarrow} 0$ and
\begin{align*}
	&\frac{1}{a(k)}\left | \sqrt{\frac{a(k)}{a(k+1)}} - \frac{a(k-1)}{a(k)}\right |^2 \\
	&= \frac{1}{a(k)}\left | 1 + \frac{1}{2} \left ( \frac{a(k)}{a(k+1)} -1 \right ) + O \left( \left ( \frac{a(k)}{a(k+1)} -1 \right )^2 \right) - \frac{a(k-1)}{a(k)}\right |^2 \\
	&= \frac{1}{a(k)}\left | \frac{a(k)}{2} \left ( \frac{1}{a(k+1)} - \frac{1}{a(k)} \right ) + O \left( \left ( \frac{a(k)}{a(k+1)} -1 \right )^2 \right) +  a(k-1) \left ( \frac{1}{a(k-1)}- \frac{1}{a(k)} \right ) \right |^2 \\
	&\overset{(a)}{\leq} \frac{1}{a(k)}\left | \frac{a(k-1)}{2} \left ( \frac{1}{a(k+1)} - \frac{1}{a(k)} \right ) + O \left( a(k)^2 \left ( \frac{1}{a(k+1)} - \frac{1}{a(k)} \right )^2 \right) \right.\\&\hspace{20em}\left.- a(k-1) \left ( \frac{1}{a(k)} - \frac{1}{a(k-1)} \right ) \right |^2 \\ 
	&= \frac{a(k-1)^2}{a(k)}\left | \frac{1}{2} \left ( \frac{1}{a(k+1)} - \frac{1}{a(k)} \right ) + O \left( a(k) \left ( \frac{1}{a(k+1)} - \frac{1}{a(k)} \right )^2 \right) -  \left ( \frac{1}{a(k)} - \frac{1}{a(k-1)} \right ) \right |^2 \\  
	&\overset{(b)}{=} \frac{a(k-1)^2}{a(k)} \left | \frac{1}{2} \left ( \varphi + o(1) \right ) + O \left( a(k) \left ( \frac{1}{a(k+1)} - \frac{1}{a(k)} \right )^2 \right) -  \left ( \varphi + o(1)\right ) \right |^2 \\   
	&\overset{(c)}{\leq} a(k-1) \left | \frac{1}{2} \left ( \varphi + o(1) \right ) + O \left( a(k) \left ( \frac{1}{a(k+1)} - \frac{1}{a(k)} \right )^2 \right) -  \left ( \varphi + o(1)\right ) \right |^2 \overset{k \uparrow \infty}{\rightarrow} 0, \\   
\end{align*}
where $(a)$ follows from \ref{assumption: an_decreasing}, and $(b)$ and $(c)$ follow from \ref{assumption: alpha_beta}. 

Next, using the Lipschitz property of $V^f(\cdot,\cdot,\cdot)$, 
\begin{align*}
	\left \| e^{f,\Delta}_k(y_k) \right \| &\leq \newuck \| x_{k+1} - x_k \| \\
	&\leq a(k) \left \| h(x_k, y_k, Y_k) + M^f_{k+1} \right \|. 
\end{align*}
Using the above and Jensen's inequality: 
\begin{align*}
	&E \left[ \left \| \sum_{k=n}^j a(k)\sqrt{\frac{a(k)}{a(k+1)}}\frac{e^{f,\Delta}_k(y_k)}{\sqrt{a(k)}} \right \|^2 \right] \\
	&\leq T \sum_{k=n}^j a(k) \frac{a(k)}{a(k+1)} E \left [ \left \| \frac{e^{f,\Delta}_k(y_k)}{\sqrt{a(k)}}  \right \|^2 \right ] \\
	&\leq \newuck \sum_{k=n}^j a(k) a(k) \leq \ \newuck \ \underset{k \geq n}{\sup} \ a(k) \overset{n \uparrow \infty}{\rightarrow} 0. 
\end{align*}
Finally, note that $\xi^f_k := \zeta^f_k(x_k)+M^f_{k+1}$ is a martingale difference sequence as both have the same filtration. We define, for $k \geq0$,
\begin{align*}
	Q_f(x_k, y_k) &= E \left[\xi^f_k {\xi^f_k}^{\intercal} | x_i, y_i, Y_i, i \leq k \right].
\end{align*}

It follows from \ref{assumption: martingale_markov_cond_independence}, \ref{assumption: martingale} and $V^f(\cdot, \cdot, \cdot)$ being Lipschitz that $Q_f(\cdot, \cdot)$ is Lipschitz. It can be shown in a manner similar to that in Section \ref{sec: limit_fast_martingale} that for $t \in [0,T]$ and for some $y(\cdot) \in C\left([0,T]; \R^{d_1}\right)$, 
\begin{align*}
	u^*(t) - \int_{0}^{t} \left ( \nabla_x h(\lambda(y(s)),y(s)) + \frac{\varphi}{2} \right )u^*(s) ds,
\end{align*}
is a martingale with respect to the filtration $\cap_{r \geq t} \sigma(w^*(s), y(s), u^*(s), s \leq r)$ for $r \geq 0$. For $t \in [0,T]$, define $\Sigma_f^n(t)$ by 
\begin{align*}
	\Sigma^n_f(t^f(j) - t^f(n)) = \sum_{k=n}^j a(k) c(t^f(k))^2 Q_f(\lambda(y^n(t^f(k))),y^n(t^f(k))),
\end{align*}
for $n \leq j \leq m^f(n)$ and linear interpolation on each interval $[t^f(j)-t^f(n),t^f(j+1]-t^f(n))$. Then
\begin{align*}
	\sum_{k=n}^{j} a(k) c(t^f(k))^2 \xi^f_{k}{\xi^f_{k}}^{\intercal} - \Sigma^n_f(t^f(j) - t^f(n)),
\end{align*}
is a martingale. Thus, for $t \in [0,T]$ and 
\begin{align*}
	q_f^n(t^f(j)) &:= \tilde{u}^n(t^f(j)) \\
	&- \int_{0}^{t^f(j)} c(t^f(n)+y) \left( \nabla_x \oh(\overline{x}(t^f(n)+y), \overline{y}(t^f(n)+y))\tilde{u}^n(y) +\frac{\varphi}{2}I  \right) dy,
\end{align*}
with linear interpolation on each interval $[t^f(j)-t^f(n),t^f(j+1)-t^f(n)]$, we have that 
\begin{align*}
	&\left \| E \left[ q_f^n(t)q_f^n(t)^{\intercal} - q^n_f(s)q^n_f(s)^{\intercal} - \left( \Sigma^n_f(t) - \Sigma^n_f(s) \right) \right.\right. \\&\hspace{5em} \left.\left. \times g(\tilde{w}([0,s]), \tilde{y}([0,s]), \tilde{u}^n([0,s]) ) \right] \right \| = o(1). 
\end{align*}
Taking the limit $n \rightarrow \infty$, we have that the limit points in law of $\left\{\left(\tilde{u}^n(\cdot), \tilde{w}^n(\cdot), \tilde{y}^n(\cdot)\right)\right\}$ satisfy
\begin{multline*}
	\left( 	u^*(t)  -  \int_{0}^{t} \left ( \nabla_x \oh(\lambda(y(s)),y(s)) + \frac{\varphi}{2} \right )u^*(s) ds \right) \\ \times \left( 	u^*(t)  -  \int_{0}^{t} \left ( \nabla_x \oh(\lambda(y(s)),y(s)) + \frac{\varphi}{2} \right )u^*(s) ds \right)^{\intercal} - \int_{0}^{t} Q_f(\lambda(y(s)), y(s))ds
\end{multline*}
is a martingale.
Using Theorem 4.2 of \cite{karatzas_shreve_martingale_representation}, we have that on a possibly augmented probability space, there exists a $d$-dimensional Brownian motion $B(t), t \geq 0,$ such that  
\begin{align*}
	u^*(t)  = \int_{0}^{t} \left ( \nabla_x \oh(\lambda(y(s)),y(s)) + \frac{\varphi}{2} \right )u^*(s) ds + \int_{0}^{t} G(\lambda(y(s))) dB(s),
\end{align*}
where $G(x)$ is a positive semidefinite, Lipschitz, square-root of the matrix $Q_f(x)$. 

This concludes the proof of Theorem \ref{thm: main_thm}.

\section{The case when $\varphi$ need not be zero} \label{sec: alpha}
If $\varphi \neq 0$, then \ref{assumption: bn_omega} does not hold. The primary reason why the preceding proof does not work for this case is due to \eqref{eqn: slow_drift}: If $\varphi \neq 0$, then the first term inside the summation in \eqref{eqn: slow_drift} grows unboundedly. To circumvent this, one can rewrite the iterative equations for the fluctuations in the slow timescale on the $\{a(n)\}$ timescale. To be precise, the iteration on the $\{b(n)\}$ timescale \eqref{eqn: iteration2} can rewritten on the $\{a(n)\}$ timescale as:
\begin{align}
	y_{n+1} = y_n + a(n) \left[ \epsilon_n \of(y_n) +\epsilon_n p(x_n, y_n)+ \epsilon_n \Delta^s_n(x_n)+ \epsilon_n M^s_{n+1} \right]. \label{eqn: equation2_alpha}
\end{align}
Define for $ j \geq n$,
\begin{align*}
	w_j^n = \frac{y_j - y^n(t^f(j))}{\sqrt{a(j)}}. 
\end{align*}
Redefine the piecewise linear interpolation $w^n(t), t \in \left[t^f(n), t^f(n)+T\right]$ for $\{y_n\}$ on the $\{a(n)\}$ timescale such that for $n \leq j \leq m^f(n)$, $w^n(t^f(j)) = w_j^n$ and linearly interpolated otherwise. 

The resultant limit points in law in this case is summarized in the following theorem:
\begin{theorem} \label{thm: main_thm_alpha}
	The limit points in law $(u^*(\cdot),w^*(\cdot), y^*(\cdot))$ of $\{\tilde{u}^n(\cdot), \tilde{w}^n(\cdot), \tilde{y}^n(\cdot) \}$ are such that $u^*(\cdot)$ satisfies
	\begin{align}
		u^*(t)  = \int_{0}^{t} \left ( \nabla_x h(\lambda(y^*(s)),y^*(s)) + \frac{\varphi}{2} I \right )u^*(s) ds + \int_{0}^{t} G(\lambda(y^*(s)), y^*(s)) dB(s), \label{eqn: fast_sde_alpha}
	\end{align}
	where $G(\cdot, \cdot)$ is a positive semidefinite, Lipschitz matrix, and $w^*(\cdot)$ satisfies
	\begin{align}
		w^*(t)  = \int_{0}^{t} \frac{\varphi}{2}w^*(s) ds. \label{eqn: slow_ode_alpha}
	\end{align}
	and $y^*(\cdot) \equiv y', y' \in \R^d$.
\end{theorem}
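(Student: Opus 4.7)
The plan is to follow the same two-step recipe as in the proof of Theorem \ref{thm: main_thm}: first establish tightness of the joint laws of $\{(\tilde{u}^n(\cdot), \tilde{w}^n(\cdot), \tilde{y}^n(\cdot))\}$ in $\mathcal{P}(C([0,T]; \mathbb{R}^d)^3)$ via the Kolmogorov criterion (Lemma \ref{lem: Lemma7.4}), and then identify any subsequential limit via Skorohod embedding together with a martingale characterization. For the $y^*$ component, observe that on the fast algorithmic timescale the rewritten iteration \eqref{eqn: equation2_alpha} carries the prefactor $\epsilon_n \to 0$, so the total variation of $y_j$ over any interval of length $T$ on the fast timescale is at most $\sum_{j=n}^{m^f(n)} b(j)\cdot O(1) = O(T \epsilon_n) \to 0$. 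Combined with the almost-sure convergence result of Theorem \ref{thm: as_limit}, this forces $\tilde{y}^n(\cdot) \to y'$ uniformly for some (possibly sample-path dependent) $y' \in \mathbb{R}^d$, yielding the $y^*$ claim.

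For $u^*$, the calculations of Section \ref{sec: limit_fast_martingale} were deliberately performed without setting $\varphi = 0$, so they transfer verbatim once one substitutes $y^*(\cdot) \equiv y'$; this gives \eqref{eqn: fast_sde_alpha} directly. The substantively new calculation is for $w^*$. With the natural choice $y^n(t) \equiv y_n$ (since the effective drift on the fast timescale is $\epsilon_n \of(\cdot) \to 0$), the scaled recursion reads
\[
	w_{j+1}^n = \sqrt{\tfrac{a(j)}{a(j+1)}}\, w_j^n + \frac{b(j)}{\sqrt{a(j+1)}} \bigl[\of(y_j) + p(x_j,y_j) + \Delta^s_j(x_j) + M^s_{j+1}\bigr].
\]
Expanding $\sqrt{a(j)/a(j+1)} - 1 = (a(j)/2)(1/a(j+1) - 1/a(j)) + O\bigl(a(j)^2 (1/a(j+1)-1/a(j))^2\bigr)$ via \ref{assumption: alpha_beta} produces the drift $(\varphi/2) a(j)\, w_j^n + o(a(j))$, exactly the $\varphi/2$ factor in \eqref{eqn: slow_ode_alpha}. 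Each of the other terms acquires the prefactor $b(j)/\sqrt{a(j+1)} \sim (b(j)/a(j)^{3/2}) \cdot a(j) = o(a(j))$ by (A1). The martingale piece has conditional mean-square sum $\sum_j b(j)^2/a(j) \leq T \sup_j (b(j)/a(j)^{3/2})^2 \to 0$, and Lemma \ref{lem: lemma7.1} then kills its integrated contribution. The Markovian piece $\Delta^s_j$ is treated by the Poisson-equation decomposition \eqref{eqn: delta_decomp_slow} exactly as in Section \ref{sec: limit_slow_martingale}, and $p(x_j, y_j) = \nabla_x \og(\lambda(y_j),y_j)\beta_j + o(\|\beta_j\|)$ yields, after cancelling $\sqrt{a(j)}$ against $\beta_j = \sqrt{a(j)} u_j$, a net prefactor $b(j)$ whose sum satisfies $\sum_j b(j) = O(T \epsilon_n) \to 0$ on the fast timescale.

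Tightness of $\tilde{w}^n(\cdot)$ then follows from a Lemma \ref{lem: slow_tightness}-style fourth-moment estimate using these smaller prefactors together with the boundedness of $\{E[\|u_j\|^4]\}$ from Section \ref{sec: un_bound}. Passing $n \to \infty$ via the martingale-characterization and vanishing-quadratic-variation argument of Section \ref{sec: limit_slow_martingale} shows that $w^*(t) - \int_0^t (\varphi/2) w^*(s)\,ds$ is a continuous martingale with zero quadratic variation, hence identically zero, giving \eqref{eqn: slow_ode_alpha}. The main obstacle is the summation-by-parts analysis for the telescoping $\tau^s_{j+1}-\tau^s_j$: the original proof invoked \ref{assumption: bn_omega} precisely here, and that assumption is now unavailable. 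The remedy is that \ref{assumption: alpha_beta} still guarantees existence of both $\varphi$ and $\vartheta$, so the same telescoping estimate carries through without forcing either limit to zero, with the leftover $O(\varphi), O(\vartheta)$ contributions absorbed into constants that remain finite on $[0,T]$ on the fast timescale.
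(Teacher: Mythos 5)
Your proposal follows the same two-step recipe the paper uses in Appendix C: rewrite the slow iteration on the $\{a(n)\}$ clock, establish tightness via the Kolmogorov criterion and the fourth-moment estimates already in place, and identify the limit by tracking which terms survive. Your choice of reference trajectory $y^n(t)\equiv y_n$ differs superficially from the paper's $\dot{y}^n(t)=\epsilon_n\overline{f}(y^n(t))$, but since $\|y^n(t)-y_n\|=O(T\epsilon_n)$ over a window of fast-time length $T$, and $\epsilon_n/\sqrt{a(j)}=O(b(n)/a(n)^{3/2})\to 0$ for $n\le j\le m^f(n)$, the two yield the same $w^n_j$ up to a vanishing error; the extra $\overline{f}(y_j)$ term in your recursion carries the prefactor $b(j)/\sqrt{a(j+1)}=a(j)\cdot O(b(j)/a(j)^{3/2})$ and therefore sums to $o(1)$, as you observe. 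The identification of the $\varphi/2$ drift from the $\sqrt{a(j)/a(j+1)}-1$ expansion, the dominance argument for the martingale and Markov-noise terms via $\sum_j b(j)=O(T\sup_{j\ge n}\epsilon_j)$, and the vanishing-quadratic-variation step all match the paper. The constancy of $y^*$ follows from your total-variation bound on the fast window; the paper leaves that implicit.

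One inaccuracy worth flagging: you attribute the role of \ref{assumption: bn_omega} to the $\tau^s$ summation-by-parts. In Section~\ref{sec: limit_slow_martingale}, that telescoping estimate (see the derivation following \eqref{eqn: sum_of_parts_fast_limit}) closes using \ref{assumption: alpha_beta} and \ref{assumption: an_decreasing} alone --- the factor $a(k-1)$ in front kills the $O(\varphi)$ and $O(\vartheta)$ contributions regardless of whether they vanish. Where \ref{assumption: bn_omega} actually enters is in controlling the drift $d_m^n$ in \eqref{eqn: slow_drift}: on the slow timescale the coefficient is $\frac{b(i)}{2}\cdot\frac{a(i)}{b(i)}(\frac{1}{a(i+1)}-\frac{1}{a(i)})$, and the factor $a(i)/b(i)\to\infty$ would make this blow up unless \ref{assumption: bn_omega} forces it to zero. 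That is precisely what fails when $\varphi\neq 0$, which is why the iteration is reclocked to the fast timescale, where $d_m^n$ instead accumulates to the $\int_0^t(\varphi/2)w^*(s)\,ds$ drift. You handle both terms correctly in your proof; the attribution is just labeled to the wrong one. With that corrected, your argument is sound and coincides with the paper's.
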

The proof of Theorem \ref{thm: main_thm_alpha} can be found in Appendix \ref{sec: main_thm_alpha}. 

\section{A Central Limit Theorem} \label{sec: clt}
Consider the case where the o.d.e.\ \eqref{eqn: second_timescale_ode} has a globally asymptotically stable equilibrium $y^*$ that is also exponentially stable, i.e., the solution $y(\cdot)$ of the o.d.e. \eqref{eqn: second_timescale_ode}  satisfies $\left \| y(t) - y^*\right \| \leq ce^{-dt} \left \| y(0) - y^*\right \|$. We will also be working under the assumption that $\varphi=0$. In this section, we briefly sketch a central limit theorem (CLT) from the FCLT proved in Theorem \ref{thm: main_thm_alpha} along the lines of \cite{borkar_meyn_markov}, for the following quantities:
\begin{align}
	\eta_n &:= \frac{\lambda(y^*) - x_n }{\sqrt{a(n)}}, \label{cltone}\\
	\upsilon_n &:= \frac{y_n - y^*}{\sqrt{a(n)}}. \label{clttwo}
\end{align} 
From Theorem \ref{thm: main_thm_alpha} with $\varphi = 0$, we have that for any continuous and bounded functions $g : \R^{d_1} \rightarrow \R$ and $g' : \R^{d_2} \rightarrow \R$, 
\begin{align}
	\underset{n \uparrow \infty}{\lim} \ E \left[ g(u^n_{m^f(n)}) \right] &= E \left[ g ( u^*_T) \right] \label{eqn: CLT_slow}\\
	\underset{n \uparrow \infty}{\lim} \ E \left[ g'(w^n_{m^f(n)}) \right] &= E \left[ g' ( w^*_T) \right], \label{eqn: CLT_fast}
\end{align}
where $u^*_T$ and $w^*_T$ are Gaussian random variables, viz.\  the solutions of \eqref{eqn: fast_sde_alpha} and \eqref{eqn: slow_ode_alpha} respectively (with appropriate initial conditions). Note that for $\varphi=0$, $w^*_T = 0$. To obtain the CLT using the limits in law of $u^n_{m^f(n)}$ and $w^n_{m^f(n)}$, we rewrite their equations in the following manner:
\begin{align*}
	w^n_{m^f(n)} &= \frac{ y_{m^f(n)} - y^n(t^f(m^f(n)))}{\sqrt{a(m^f(n))}} = \frac{y^* - y_{m^f(n)}}{\sqrt{a(m^f(n))}} + \frac{y^n(t^f(m^f(n))) - y^*}{\sqrt{a(m^f(n))}}, \\
	u^n_{m^f(n)} &= \frac{ \lambda(y_{m^f(n)}) - x_{m^f(n)}  }{\sqrt{a(m^f(n))}} =\frac{ \lambda(y^*) - x_{m^f(n)}  }{\sqrt{a(m^f(n))}} + \frac{ \lambda(y^n(t^f(m^f(n))))-\lambda(y^*)  }{\sqrt{a(m^f(n))}}  \\&\hspace{13em}+ \frac{ \lambda(y_{m^f(n)})- \lambda(y^n(t^f(m^f(n))))  }{\sqrt{a(m^f(n))}}.
\end{align*}
First consider the equation for the slow timescale. From our assumption that \eqref{eqn: second_timescale_ode} is globally exponentially stable, it follows that
\begin{align*}
	\left \| \frac{y^n(t^f(m^f(n))) - y^*}{\sqrt{a(m^f(n))}} \right \| &\leq c e^{-dT} \frac{\left \| y_n - y^*\right \|}{\sqrt{a(m^f(n))}} \\
	&= c e^{-dT} \| \upsilon_n \| \sqrt{\frac{ a(n)}{a(m^f(n))}}.
\end{align*}
Using \ref{assumption: fast_ode_convergence}, we have that
\begin{align*}
	\left \| w^n_{m^f(n)} - \upsilon_{m^f(n)}\right \| &\leq c e^{-dT} \| \upsilon_n\|  \sqrt{\frac{ a(n)}{a(m^f(n))}} \\
	&\leq \zeta(T) \| \upsilon_n \|, 
\end{align*}
where $\zeta(T)$ is a decreasing function of $T$ decreasing to zero, independent of $n$. Let $b^s_{n} := \underset{ n \leq k \leq m^f(n)}{\max} E \left [ \| \upsilon_k \|^4 \right ] $. We will first show that $\{E \left[ \| \upsilon_n \|^4 \right], n \geq 0\}$ is bounded. It follows from the above equation that
\begin{align*}
	\|  \upsilon_n \| \leq  \| w^{m^{f-}(n)}_{n}  \| +\zeta(T)^4  \| \upsilon_{m^{f-}(n)} \|.
\end{align*}
Using \eqref{eqn: slow_fourth_moment_bdd},
\begin{align*}
	b^s_n &= \underset{ n \leq k \leq m^f(n)}{\max}	E \left [ \left \| \upsilon_{k} \right \|^4 \right ] \leq 	\underset{ n \leq k \leq m^f(n)}{\max} 8 E \left [ \left \| w^{m^{f-}(k)}_{k} \right \|^4 \right ] + 8 \zeta(T)  \underset{ n \leq k \leq m^f(n)}{\max} E \left [ \| \upsilon_{m^{f-}(k)} \|^4 \right ] \\
	&\leq \newuck +  8 \zeta(T) b^s_{m^{f-}(n)}. 
\end{align*}
Choosing $T$ large enough such that $\zeta(T)^4 < 1/8$, we have that $\{E \left[ \| \upsilon_n \|^4 \right]\}$ is bounded. It follows that 
\begin{align*}
	\underset{n \uparrow \infty}{\lim} 	E \left [ \left \| w^n_{m^f(n)} - \upsilon_{m^f(n)}\right \|^4 \right ] \leq \newuck \zeta(T)^4. 
\end{align*}
Taking $T \uparrow \infty$ yields the CLT for (\ref{cltone}) by \eqref{eqn: CLT_slow}. Next, we will derive a CLT for the fast timescale. Using Taylor's theorem, we have that
\begin{align*}
	u^n_{m^f(n)} &= \eta_{m^f(n)} + \frac{  \lambda(y^n(t^f(m^f(n)))) - \lambda(y^*)}{\sqrt{a(m^f(n))}}  + \nabla \lambda (y_{m^f(n)}) w^{n}_{m^f(n)}  + o(1),
\end{align*}
It is clear that $\nabla \lambda (y_{m^f(n)}) w^{n}_{m^f(n)}$ converges to 0 in probability as $\varphi=0$.  By the exponential stability of \eqref{eqn: second_timescale_ode}, it follows that
$$
\begin{aligned}
	\left \| 	u^n_{m^f(n)}  - \eta_{m^f(n)}  \right \| &\leq \frac{ \|  \lambda(y^*)- \lambda(y^n(t^f(m^f(n)))) \| }{\sqrt{a(m^f(n))}} + \left \|\nabla \lambda (y_{m^f(n)}) \right \|  \left \| w^{'n}_{m^f(n)} \right \| \\
	&\leq \newuck c e^{-dT} \| \upsilon_n \| \sqrt{\frac{ a(n)}{a(m^f(n))}} +  \left \|\nabla \lambda (y_{m^f(n)}) \right \| \left \| w^{'n}_{m^f(n)} \right \| .
\end{aligned}
$$
The CLT in \eqref{clttwo} follows from the result that $\{E \left[ \| \upsilon_n \|^4 \right]\}$ is bounded using the same arguments as above in view of \eqref{eqn: CLT_fast}.

For $\varphi > 0$, $w^*(T) \uparrow \infty$, which poses a problem.


%

\appendix
\section{Proof of Lemma \ref{lem: first_timescale_with_ode_fclt}} \label{sec: appendix_martingale_gamma}
	\begin{proof}[Proof of Lemma \ref{lem: first_timescale_with_ode_fclt}.]
	Let $q,r \in \mathbb{N}$ be such that $n < q < r \leq m^f(n)$. For $n \leq j \leq m^f(n)$,
	\begin{align*}
		x^n(t^f(j+1)) = x^n(t^f(j)) + a(j) \left(\oh(x^n(t^f(j)), y_n) - \delta^x_j \right),
	\end{align*}
	where $\delta^x_j = O(a(j))$ is the discretization error. 
	Subtracting the above equation from: 
	\begin{align*}
		x_{j+1} = x_j + a(j) \left(\oh(x_j, y_j) +\Delta^f_j(x_j)+ M^f_{j+1} \right),
	\end{align*}
	we have, for $\rho^n_k := x_k - x^n(t^f(k))$,  that
	\begin{align*}
		\rho^n_{j+1} &= \rho^n_j + a(j) \left( \oh(x_j, y_j) - \oh(x^n(t^f(j)), y_n) + M^f_{j+1} +\delta^x_j \right) \\
		&= \rho^n_j + a(j) \left( \nabla_x \oh(x^n(t^f(j)), y_n) \rho^n_j + \nabla_y \oh(x^n(t^f(j)), y_n) \left(y_j - y_n\right) \right. \\&\hspace{15em} \left.+\kappa^x_j + \kappa^y_j + M^f_{j+1} +\Delta^f_j(x_j)+\delta^x_j \right),
	\end{align*}
	where $\kappa^x_j = o( \| \rho^n_j  \|)$, $\kappa^y_j = o( \left  \| y_j - y_n \right \|)$ and $\delta^x_j = \mathcal{O}(a(j))$. Thus, for $0 \leq i \leq m^f(n)-n$,
	\begin{align*}
		\rho^n_{n+i} &= \prod_{j=n}^{n+i-1} \left( 1+  \nabla_x \oh(x^n(t^f(j)), y_n) \right) \rho^n_n \\&+ \sum_{j=n}^{n+i-1} a(j) \prod_{k=j+1}^{n+i-1} \left( 1+  \nabla_x \oh(x^n(t^f(k)), y_n) \right) \left( \nabla_y \oh(x^n(t^f(j)), y_n) \left(y_j - y_n\right) \right. \\&\hspace{18em} \left.+ \kappa^x_j + \kappa^y_j + M^f_{j+1} +\Delta^f_j(x_j)+\delta^x_j \right). 
	\end{align*}
	Defining $\gamma^n_j := \rho^n_j / \sqrt{a(n)}, j \geq n$ and using the fact that $\rho^n_n=0$, we have that
	\begin{align*}
		\gamma^n_{n+i} &= \sum_{j=n}^{n+i-1} \sqrt{a(j)}  \left( \nabla_y \oh(x^n(t^f(j)), y_n) \left(y_j - y_n\right) \right. \\&\hspace{10em} \left. + \kappa^x_j + \kappa^y_j + M^f_{j+1}+\Delta^f_j(x_j) +\delta^x_j \right) \sqrt{\frac{a(j)}{a(j+1)}} \Gamma^{x,n+i-1}_{n,j+1} ,
	\end{align*}
	where $\Gamma^{x,n+i-1}_{n,j+1} =  \prod_{k=j+1}^{n+i-1} \sqrt{\frac{a(k)}{a(k+1)}} \left( 1+  \nabla_x \oh(x^n(t^f(k)), y_n) \right)$. Note that $\Gamma^{x,n+i-1}_{n,j+1} $ is uniformly bounded for all $n$ and $0 \leq i \leq m^f(n)$ by \ref{assumption: alpha_beta} and
	$$
	\begin{aligned}
		\left\|\prod_{r=k+1}^{\ell_1} \left (1 + a(r)\nabla_x\oh(x_{r}, y_{r}) \right )\right\| &\leq e^{\olduck{const: D_xh}\sum_{r=k+1}^{\ell_1} a(r)} \\
		&\leq e^{\olduck{const: D_xh}(T+1)}, 
	\end{aligned}
	$$
	for a suitable bound $\olduck{const: D_xh}$ on $\| \nabla_x \oh(\cdot, \cdot) \| $. Also,
	\begin{align*}
		y_j - y_n = \sum_{k=n}^{j-1} b(k)  \left(g(x_k, y_k,Y_k) + M^s(k+1)\right).	
	\end{align*} 
Thus,
\begin{align*}
&E \left [	\left \| \frac{ y_j - y_{n}}{\sqrt{a(j)}} \right \|^4 \right ] \\
&= E \left [	\left \| \sqrt{\frac{a(n)}{a(j)}}\frac{ y_j - y_{n}}{\sqrt{a(n)}} \right \|^4 \right ] \\
&\leq \frac{1}{a(n)^2} E \left [ \left ( \sqrt{\frac{a(n)}{a(m^f(n))}} \sum^{j-1}_{k=n} \frac{b(k)}{a(k)} a(k) \left( \left \| g(x_k, y_k,Y_k) \right \| + \left \| M^s(k+1) \right \| \right ) \right )^4 \right ]\\
&\overset{(a)}{\leq}  \newuck \frac{\left( \sum^{j-1}_{k=n}  a(k) \right)^4}{a(n)^2} E \left [  \sum^{j-1}_{k=n} \frac{a(k)}{\left( \sum^{j-1}_{k=n}  a(k) \right)} \left (\frac{b(k)}{a(k)} \right )^4 \left( \left \| g(x_k, y_k,Y_k) \right \| + \left \| M^s(k+1) \right \| \right )^4  \right ]\\
	&\overset{(b)}{\leq}	 \newuck \left( \sum^{j-1}_{k=n}  a(k) \right)^3 \underset{n \leq k \leq j}{\sup} \left (\frac{b(k)}{a(k)^{3/2}} \right )^4 \\
	&\leq	 \newuck  \underset{n \leq k \leq j}{\sup} \left (\frac{b(k)}{a(k)^{3/2}} \right )^4 ,
\end{align*}
where $(a)$ follows from \ref{assumption: LemmaA.28} and Jensen's inequality and $(b)$ follows from \ref{assumption: an_decreasing} and the boundedness of $g(\cdot,\cdot,\cdot)$ and the fourth moment of $M^s_{(\cdot)}$. It follows that
\begin{align*}
		&E \left [	\left \| \sum_{j=q}^{k-1} \sqrt{a(j)}  \nabla_y \oh(x^n(t^f(j)), y_n) (y_j-y_n) \sqrt{\frac{a(j)}{a(j+1)}} \Gamma^{x,k-1}_{n,j+1} \right \|^4  \right ] \\
		&= E \left [\left \| \sum_{j=q}^{k-1} a(j)  \nabla_y \oh(x^n(t^f(j)), y_n) \frac{(y_j-y_n)}{\sqrt{a(j)}} \sqrt{\frac{a(j)}{a(j+1)}} \Gamma^{x,k-1}_{n,j+1} \right \|^4  \right ] \\
		&\leq \left(\sum_{j=q}^{k-1} a(j)\right)^3  \sum_{j=q}^{k-1} a(j) E\left [ \left \| \nabla_y \oh(x^n(t^f(j)), y_n) \frac{(y_j-y_n)}{\sqrt{a(j)}} \sqrt{\frac{a(j)}{a(j+1)}} \Gamma^{x,k-1}_{n,j+1} \right \|^4 \right ]  \\
		&\leq \newuck \left(\sum_{j=q}^{k-1} a(j)\right)^3 \sum_{j=q}^{k-1} a(j) \underset{n \leq k \leq j}{\sup} \left (\frac{b(k)}{a(k)^{3/2}} \right )^4 \\
		&\leq \newuck  \left(\sum_{j=q}^{k-1} a(j)\right)^4.
\end{align*}
	As $\kappa^y_j = o(\| y_j-y_n \|)$, we also have that
	\begin{align*}
		E \left[ \left \| \sum_{j=q}^{k-1} \sqrt{a(j)}  \kappa^y_j \sqrt{\frac{a(j)}{a(j+1)}} \Gamma^{x,k-1}_{n,j+1} \right \|^4 \right] 	&\leq \newuck  \left(\sum_{j=q}^{k-1} a(j)\right)^4.
	\end{align*}
	The terms corresponding to $\kappa^x_j, M^f_{j+1},\delta^x_j$ and $\Delta^f_j(y_j)$ can be handled in the same way as in the proof of Lemma \ref{lem: tightness_first_ts}:
	\begin{align*}
		\left \| \sum_{j=q}^{r} \sqrt{a(j)}  \kappa^x_j \sqrt{\frac{a(j)}{a(j+1)}} \Gamma^{x,k-1}_{n,j+1} \right \| &\leq \newuck\sum_{k=q}^{r} a(k) \left \| \gamma^n_k \right \|, \\
		\left \| \sum_{j=q}^{r} \sqrt{a(j)}  \delta^x_j \sqrt{\frac{a(j)}{a(j+1)}} \Gamma^{x,k-1}_{n,j+1} \right \| &\leq \newuck\sum_{k=q}^{r} a(k)^{3/2}\\
		E \left[ \underset{q \leq k \leq r}{\sup } \left \| \sum_{j=q}^{k-1} \sqrt{a(j)}  M^f_{j+1}\sqrt{\frac{a(j)}{a(j+1)}} \Gamma^{x,k-1}_{n,j+1} \right \|^4 \right] &\leq \newuck \left(\sum_{k=q}^{r} a(k) \right)^2. \\
		E \left[  \left \| \sum_{j=q}^{r} \sqrt{a(j)}  \Delta^f_{j}(y_j)\sqrt{\frac{a(j)}{a(j+1)}} \Gamma^{x,k-1}_{n,j+1} \right \|^4 \right] &\leq \newuck \left(\sum_{k=q}^{r} a(k) \right)^2.
	\end{align*}
	Combining the above equations, we have that
	\begin{align}
		E \left[\left \| \gamma^n(t^f(k)) \right \|^4 \right] \leq \newuck \left(\sum_{j=n}^{k} a(k) \right)^2 \left( 1 + \sum_{j=n}^{k} a(j)E \left[ \| \gamma^n(t^f(j)) \|^4 \right] \right), \label{eqn: gamma_discrete_gronwall}
	\end{align}
	using Jensen's inequality. Note that $\left(\sum_{j=n}^{k} a(k) \right)^2 \leq T^2$ for $n \leq k \leq m^f(n)$. It follows from the discrete Gronwall inequality that
$$
	\begin{aligned}
		\underset{n \leq k \leq m^f(n)}{\sup} E \left[\left \| \gamma^n(t^f(k))\right \|^4 \right] < \newuck\label{const: discrete_gronwall_gamma}, \label{eqn: first_ts_CLT_mom4_ub}
	\end{aligned}
$$
where \olduck{const: discrete_gronwall_gamma} does not depend on $n$ but does depend on $T$. 
\end{proof}
\section{Proof of Lemma \ref{lem: slow_tightness}} \label{sec: tightness_slow}
\begin{proof}[Proof of Lemma \ref{lem: slow_tightness}.]
By \ref{assumption: alpha_beta}, $\sqrt{b(j)/b(j+1)}$ is uniformly bounded in $j$. Also, as $f$ is uniformly Lipschitz, $\nabla f$ is uniformly bounded. Thus, for $n \leq k \leq m^s(n)$, 
$$
\begin{aligned}
	\left\|\prod_{r=k+1}^{\ell}(1 + b(r)\nabla \of( y^n(t^s(r) ) ) ) \right\| &\leq e^{  \sum_{r=k+1}^{\ell} a(r)} \\
	&\leq e^{\newuck\label{const: Df}(T+1)},
\end{aligned}
$$
for a suitable bound $\olduck{const: Df} > 0$ on $\|\nabla \of(\cdot)\|$. We define
\begin{align*}
	\Gamma_{n,j}^{s,k} 	:= \prod_{r=j}^{k} \left(I+b(r) \nabla \of( y^n(t^s(r) ) )\right) \sqrt{\frac{a(r)}{a(r+1)}} .
\end{align*}
It follows that the above quantity is bounded. For a sufficiently large $j$, there exists $\newuck\label{const: kappa_2},\newuck\label{const: delta_factor}>0$ such that,
\begin{equation}
	\begin{aligned}
		\| e^{s,T}_j \| &\leq \olduck{const: kappa_2} \|\alpha_j^n \| = \sqrt{a(j)} \olduck{const: kappa_2} \|w_j^n\| \\
		\| \delta_j \| &\leq \olduck{const: delta_factor} b(j).
	\end{aligned}
\end{equation}
Thus for large $j$,
$$
\begin{aligned}
	\left\|\sum_{j=k+1}^{\ell} \frac{b(j)}{\sqrt{a(j)}} \Gamma_{n,j+1}^{s,\ell} e^{s,T}_j \sqrt{\frac{a(j)}{a(j+1)}} \right\| 	&\leq \newuck\label{const: esT_ub} \sum_{j=k+1}^{\ell} \frac{b(j)}{\sqrt{a(j)}} \left( \sqrt{a(j)} \|  w_j^n\|  \right) \\
	&= \olduck{const: esT_ub}\sum_{j=k+1}^{\ell} b(j) \|  w_j^n\|.
\end{aligned}
$$
We also have that for some $\newuck\label{const: delta_jn_final}>0$, using the fact that $\sqrt{b(j)} \leq \sqrt{a(j)}$, 
\begin{align*}
	\left\|\sum_{j=k+1}^{\ell} \frac{b(j)}{\sqrt{a(j)}} \Gamma_{n,j+1}^{s,\ell} \left ( \delta_j \right ) \sqrt{\frac{a(j)}{a(j+1)}} \right\| 
	&\leq \olduck{const: delta_jn_final} \sum_{j=k+1}^{\ell} \frac{b(j)}{\sqrt{a(j)}} b(j) \\
	&\leq \olduck{const: delta_jn_final}  \sum_{j=k+1}^{\ell} b(j)^{\frac{3}{2}} \\
	&\leq \olduck{const: delta_jn_final} \underset{k+1 \leq m < k}{\sup} \sqrt{b(m)} \sum_{j=k+1}^{\ell} b(j).
\end{align*}

Using Lemma \ref{lem: lemma7.1}, we have that for a large enough $\newuck\label{const: martingale_2}$,
\begin{align*}
	& E \left [ \underset{k \leq m \leq \ell}{\sup} \left\|\sum_{j=k+1}^{m}\frac{b(j)}{\sqrt{a(j)}}\Gamma_{n,j+1}^{s,m} \left ( M_{j+1}^s \right ) \sqrt{\frac{a(j)}{a(j+1)}} \right\|^4 \right ] \\
	&\leq \olduck{const: martingale_2} \left( \left( \sum_{j=k+1}^{\ell} \frac{b(j)^2}{a(j)} \right)^2 + \sum_{j=k+1}^{\ell} \frac{b(j)^4}{a(j)^2} \right ) \\
	&\leq \olduck{const: martingale_2} \left( \left( \sum_{j=k+1}^{\ell} b(j) \right)^2 + \sum_{j=k+1}^{\ell} b(j)^2 \right ) \\
	&\leq \olduck{const: martingale_2} \left( \sum_{j=k+1}^{\ell} b(j) \right)^2 .
\end{align*}
Also, using the fact that $\og$ is Lipschitz and Jensen's inequality,
$$
\begin{aligned}
	&\underset{n \leq k \leq m^s(n)}{\sup} E \left [\left\|\sum_{j=k}^{\ell} \frac{b(j)}{\sqrt{a(j)}} \Gamma_{n,j+1}^{s,k} p(x_j, y_j)  \sqrt{\frac{a(j)}{a(j+1)}} \right \|^4 \right ]\\
	&\leq \newuck\label{const: p}  \underset{n \leq k \leq m^s(n)}{\sup} E \left [ \left ( \sum_{j=k}^{\ell} b(j) \frac{\|x_j - \lambda(y_j)\|}{\sqrt{a(j)}} \right )^4\right ] \\
	&= \olduck{const: p}   \underset{n \leq k \leq m^s(n)}{\sup} \left( \sum_{j=k}^{\ell} b(j) \right)^4 E \left [ \left ( \sum_{j=k}^{\ell} \frac{b(j)}{\sum_{j=k}^{\ell} b(j_1)} \frac{\|x_j -\lambda( y_j) \|}{\sqrt{a(j)}} \right )^4\right ] \\
	&\leq \olduck{const: p}  \underset{n \leq k \leq m^s(n)}{\sup} \left( \sum_{j=k}^{\ell} b(j) \right)^3  \sum_{j=k}^{\ell} b(j) E \left [ \left (\frac{\|x_j - \lambda(y_j) \|}{\sqrt{a(j)}} \right )^4\right ] \\
	&\leq \olduck{const: p} \left( \sum_{j=k}^{\ell} b(j) \right)^4,
\end{aligned}
$$
where in the last step we used the result that $E \left [ \|  u_j\|^4 \right ]<\infty$ from Section \ref{sec: tightness_fast}. 

The final term left to handle in \eqref{eqn: alphan_1} is the one involving $\Delta_j^s(x_j)$. Recall \eqref{eqn: delta_decomp_slow}:
\begin{align*}
	\Delta_j^s(x) &= \zeta_{j}^s(x) +\tau^s_{j}(x)-\tau^s_{j+1}(x)+e^{s,\Delta}_{j}(x).
\end{align*}
The rest of this proof will be devoted to handling each of these terms. Using a summation by parts argument,
\begin{align*}
	&\sum_{j=k+1}^{\ell} \frac{b(j)}{\sqrt{a(j)}} \Gamma_{n,j+1}^{s,\ell} \left(\tau^s_{j+1}(x_j) -  \tau_{j}^s(x_j)   \right) \sqrt{\frac{a(j)}{a(j+1)}} \\
	&= \frac{b(\ell)}{\sqrt{a(\ell)}} \tau^s_{\ell+1}(x_{\ell}) - \frac{b(k+1)}{\sqrt{a(k+1)}}\Gamma_{n,k+1}^{s,\ell} \tau^s_{k+1}(x_{k+1}) \\
	&+\sum_{j=k+2}^{\ell} \Gamma_{n,j+1}^{s,\ell} \frac{b(j)}{\sqrt{a(j)}} b(j-1) \left( \nabla \of(y^n(t^s(j))) \right. \\&\hspace{8em} \left.+ \left (\frac{1}{b(j)} - \frac{1}{b(j-1)} \right )I \right) \tau_j^s(x_j) \sqrt{\frac{a(j)}{a(j+1)}}, \numberthis \label{eqn: sum_of_parts_slow}
\end{align*}
where $I :=$ the identity matrix. Thus,
\begin{align*}
	&\left \| \sum_{j=k+1}^{\ell} \frac{b(j)}{\sqrt{a(j)}} \Gamma_{n,j+1}^{s,\ell} \left( \tau_{j}^s(x_j)  - \tau^s_{j+1}(x_j) \right) \sqrt{\frac{a(j)}{a(j+1)}} \right \| \\
	&\leq \newuck \left( \left \| \frac{b(\ell)}{\sqrt{a(\ell)}} \tau^s_{\ell+1}(x_{\ell}) \right \|+ \left \| \frac{b(k+1)}{\sqrt{a(k+1)}}\Gamma_{n,k+1}^{s,\ell} \tau^s_{k+1}(x_{k+1}) \right \| \right.\\
	&\left.+\left \| \sum_{j=k+2}^{\ell} \Gamma_{n,j+1}^{s,\ell} \tau_j^s(x_j) \frac{b(j)}{\sqrt{a(j)}} b(j-1) \left( \nabla \of(y^n(t^s(j))) + \left (\frac{1}{b(j)} - \frac{1}{b(j-1)} \right )I \right) \right \| \right).  \numberthis \label{eqn: tau_decomp_slow}
\end{align*} 
Note that as $V^s(\cdot, \cdot, \cdot)$ is bounded, $\tau^s_j(x_j)$ is bounded. Using $(a+b+c)^4 \leq 27(a^4+b^4+c^4)$, \ref{assumption: alpha_beta} and Jensen's inequality, we have
\begin{align*}
	&E \left [ \left \| \sum_{j=k+1}^{\ell} \frac{b(j)}{\sqrt{a(j)}} \Gamma_{n,j+1}^{s,\ell} \left( \tau_{j}^s(x_j)  - \tau^s_{j+1}(x_j) \right) \sqrt{\frac{a(j)}{a(j+1)}} \right \|^4 \right ] \\
	&\leq \newuck \left( b(\ell)^2 + b(k+1)^2 + E\left [\left ( \sum_{j=k+2}^{\ell} \frac{b(j)}{\sqrt{a(j)}}b(j-1) \left \| \tau_j^s(x_j) \right \| \right )^4 \right]  \right) \\
	&\leq \newuck \left( 2\left(\sum_{j=k+1}^{\ell} b(j)\right)^2 + \left(\sum_{j=k+2}^{\ell} b(j-1)\right)^3 \sum_{j=k+2}^{\ell}  \frac{b(j)^4}{a(j)^2}b(j-1) E \left[ \left \| \tau_j^s(x_j) \right \|^4\right] \right) \\
	&\leq \newuck \left(\sum_{j=k+1}^{\ell} b(j)\right)^2.
\end{align*}
Also, as $V^s(x, \cdot, Y_{n+1})$ is Lipschitz,
$$
\begin{aligned}
	\left \| e^{s,\Delta}_{j}(x) \right \| &= \left \| V^s(x, y_{j+1}, Y_{j+1}) - V^s(x, y_j, Y_{j+1}) \right \| \\
	&\leq \newuck \left \| y_{j+1} - y_j \right \| \\
	&\leq\newuck b(j),
\end{aligned}
$$
as $\og(\cdot, \cdot, \cdot)$ is bounded. Thus,
\begin{align*}
	&E \left [ \left \| \sum_{j=k+1}^{\ell} \frac{b(j)}{\sqrt{a(j)}} \Gamma_{n,j+1}^{s,\ell} e^{s,\Delta}_{j}(x_j) \sqrt{\frac{a(j)}{a(j+1)}} \right \|^4 \right ] \\
	&\leq \newuck \left(\sum_{j=k+1}^{\ell} b(j)\right)^3 \left (\sum_{j=k+1}^{\ell} \frac{b(j)}{a(j)^2} b(j)^4 \right )\\
	&\leq \newuck \left(\sum_{j=k+1}^{\ell} b(j)\right)^2.
\end{align*}
Recall that $\zeta^s_j(x_j)$ is a bounded martingale difference sequence. Thus, using Lemma \ref{lem: lemma7.1} for 
\begin{align*}
	\Psi_{k,m} := \sum_{j=k+1}^{m} \frac{b(j)}{\sqrt{a(j)}} \Gamma_{n,j+1}^{s,m} \zeta^{s}_{j}(x_j) \sqrt{\frac{a(j)}{a(j+1)}},
\end{align*}
we have that
\begin{align*}
	E \left [ \underset{k+1 \leq m \leq \ell}{\sup} \| \Psi_m \|^4 \right ] &\leq \left(\sum_{j=k+1}^{\ell} b(j)\right)^2 + \sum_{j=k+1}^{\ell} b(j)^2 \\
	&\leq \newuck \left(\sum_{j=k+1}^{\ell} b(j)\right)^2.
\end{align*}

Combining the above bounds, we have that for $n \leq k \leq m^s(n)$,
\begin{align*}
	E \left[ \left \| w^n_k \right \|^4 \right] &\leq \newuck \left ( \left(\sum_{j=n}^{k} b(j)\right)^2 + E \left[ \left(\sum_{j=n}^{k} b(j) \|  w_j^n\| \right)^4 \right] \right ) \\
	&\leq \newuck \left(\sum_{j=n}^{k} b(j)\right)^2 \left( 1+ \sum_{j=n}^{k}  b(j) E \left[\| w^n_j \|^4 \right]\right), \numberthis \label{eqn: slow_final_ub_tight}
\end{align*}
where the last step follows from Jensen's inequality. Using the discrete Gronwall inequality, it follows that
\begin{align}
	\underset{n \leq k \leq m^s(n)}{\sup}	E \left[ \left \| w^n_{k} \right \|^4 \right] \leq \newuck < \infty, \label{eqn: slow_fourth_moment_bdd}
\end{align}
where the upper bound can be a function of $T$ but is inddependent of $n$. Using this result in \eqref{eqn: slow_final_ub_tight}, we have that
\begin{align*}
	E \left[ \left \| w_2^n(t^s(\ell)) - w_2^n(t^s(k)) \right \|^4 \right] &\leq \left(\sum_{j=k+1}^{\ell} b(j)\right)^2 \\
	&=O \left( \left | t^s(\ell) - t^s(k) \right |^2 \right).
\end{align*}
\end{proof}
\section{Proof of Lemma \ref{thm: main_thm_alpha}} \label{sec: main_thm_alpha}
\begin{proof}[Proof of Theorem \ref{thm: main_thm_alpha}.]
The proof of this theorem is similar to that of Theorem \ref{thm: main_thm}. Specifically, the proof of tightness of the laws of $\tilde{u}^n(\cdot)$ and characterizing its limit in law is exactly the same. For the second timescale, we will follow the earlier template: first prove tightness for the laws of $\tilde{w}^n(\cdot)$ and then characterize its limit in law. 

\noindent\textbf{Proof of tightness.} Let $y^n(t), t \geq t^f(n)$ be the unique solution of the o.d.e.\ \eqref{eqn: second_timescale_ode} started at $t^f(n)$, i.e.,
\begin{align*}
	\dot{y}^n(t) &= \epsilon_n \of(y^n(t)), t \geq t^f(n), \\
	y^n(t^f(n)) &= y_n.
\end{align*}
Using the Taylor expansion of $y^n(\cdot)$, we have the following iterative equation for $j \geq n$:
\begin{align*}
	y^n(t^f(j+1)) &= y^n(t^f(j)) + a(j) \left(  \epsilon_n \of(y^n(t^f(j))) - \delta^y_j \right),
\end{align*}
where $\delta^y_j = O(a(j))$. Subtracting the above equation from \eqref{eqn: equation2_alpha}, we have
\begin{align*}
	\alpha^n_{j+1} := y_n - y^n(t^f(j+1)) = \alpha^n_j + a(j)\epsilon_j \left( \epsilon_j \nabla \of(y^n(t^f(j))) \alpha^n_j + e^{s,T}_j+ \delta^y_j + p(x_j, y_j) +  \Delta^s_j(x_j) + M^s_{j+1}  \right),
\end{align*}
where $e^{s,T}_j = o(\|\alpha^n_j\|)$ from the Taylor expansion. Then
\begin{align*}
	w^n_{j+1} &= w^n_j \left ( I + a(j) \epsilon_j \nabla \of(y^n(t^f(j)))  \right )\sqrt{\frac{a(j)}{a(j+1)}} \\&+ \sqrt{a(j)} \epsilon_j\left(e^{s,T}_j+ \delta^y_j +  p(x_j, y_j) +  \Delta^s_j(x_j) + M^s_{j+1}  \right)\sqrt{\frac{a(j)}{a(j+1)}}.
\end{align*}
Iterating the above equation for $0 < i \leq m^f(n)-n$, we have that
\begin{align*}
	w^n_{n+i} &= \prod_{j=n}^{n+i-1}   \left ( I + a(j) \epsilon_j \nabla \of(y^n(t^f(J)))  \right )\sqrt{\frac{a(j)}{a(j+1)}} w^n_n \\&+ \sum_{j=n}^{n+i-1} \sqrt{a(j)} \Gamma_{n,j+1}^{s,n+i-1} \epsilon_j \left(e^{s,T}_j+ \delta^y_j +  p(x_j, y_j) + \Delta^s_j(x_j) + M^s_{j+1}  \right)\sqrt{\frac{a(j)}{a(j+1)}},
\end{align*}
where
\begin{align*}
	\Gamma_{n,j+1}^{s,n+i-1} = \prod_{k=j+1}^{n+i-1}   \left ( I + a(k) \epsilon_k \nabla \of(y^n(t^f(k)))  \right )\sqrt{\frac{a(k)}{a(k+1)}}, n \leq j \leq m^f(n). 
\end{align*}
It follows from arguments in Section \ref{sec: tightness_fast} that the above quantity is uniformly bounded. Note that $w^n_n=0$. Next, we prove a result analogous to Lemma \ref{lem: slow_tightness}. 
\begin{lemma}\label{lem: slow_tightness_alpha}
	For $m^s(n) \geq \ell > k \geq n, n \geq 0$,
	\begin{equation*}
		E\left[\|w^n(t^f(\ell)) - w^n(t^f(k))\|^4\right]  =  O\left ( \left (\sum_{j=k}^{\ell}a(j)\right )^2 \right ) 
		=  O(|t^f(\ell) - t^f(k)|^2).
	\end{equation*}
\end{lemma}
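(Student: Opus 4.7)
The plan is to mirror the proof of Lemma \ref{lem: slow_tightness} term by term, with the $b(j)$-timescale estimates replaced by their counterparts on the rewritten $a(j)$-timescale. Using $w^n_n = 0$ and iterating the recursion displayed just before the lemma, one obtains
\begin{align*}
w^n(t^f(\ell)) - w^n(t^f(k)) = \sum_{j=k}^{\ell-1}\sqrt{a(j)}\,\epsilon_j\,\Gamma^{s,\ell-1}_{n,j+1}\!\left(e^{s,T}_j+\delta^y_j+p(x_j,y_j)+\Delta^s_j(x_j)+M^s_{j+1}\right)\!\sqrt{\tfrac{a(j)}{a(j+1)}}
\end{align*}
plus a term coming from the mismatch between $\Gamma^{s,\ell-1}_{n,j+1}$ and $\Gamma^{s,k-1}_{n,j+1}$ on $n \leq j \leq k-1$, which is absorbed by the standard telescoping/factoring argument of Section \ref{sec: tightness_slow}. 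The key observation is that the prefactor $\sqrt{a(j)}\,\epsilon_j$ equals $b(j)/\sqrt{a(j)}$, which is precisely the coefficient driving the estimates in Lemma \ref{lem: slow_tightness}. Consequently, the entire bookkeeping from that proof transfers, with partial sums of $b(j)$ harmlessly upper-bounded by partial sums of $a(j)$.

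I would then bound the fourth moment of each term in turn. The martingale increments $M^s_{j+1}$ and $\zeta^s_j(x_j)$ (the martingale-difference component in the decomposition \eqref{eqn: delta_decomp_slow} of $\Delta^s_j$) are handled by Lemma \ref{lem: lemma7.1}; since $\epsilon_j \leq 1$ eventually, these contribute $O\bigl((\sum a(j)\epsilon_j^2)^2\bigr) = O\bigl((\sum a(j))^2\bigr)$. The discretization term $\delta^y_j = O(a(j))$ and the Markov residual $\|e^{s,\Delta}_j\| = O(b(j))$ yield even higher-order corrections in $a(j)$. The telescoping piece $\tau^s_{j+1} - \tau^s_j$ is treated by a summation-by-parts calculation exactly in the spirit of \eqref{eqn: sum_of_parts_slow}, and the resulting mismatch $\frac{1}{a(k)}\bigl|\sqrt{a(k)/a(k+1)} - b(k-1)/b(k)\bigr|^2$ goes to zero via Assumption \ref{assumption: alpha_beta}, which postulates the existence of both $\varphi$ and $\vartheta$ but does not require $\varphi = 0$. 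The coupling drift $p(x_j,y_j) = \nabla_x\og(\lambda(y_j),y_j)(x_j - \lambda(y_j)) + o(\|x_j-\lambda(y_j)\|)$ yields a contribution of order $a(j)\epsilon_j\|u_j\|$, which combined with Jensen's inequality and the uniform fourth-moment bound $\sup_j E[\|u_j\|^4] < \infty$ from Section \ref{sec: un_bound} is again $O((\sum a(j))^2)$.

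The self-referential Taylor residual $e^{s,T}_j = o(\sqrt{a(j)}\|w^n_j\|)$ is the only piece that feeds back into $E[\|w^n_j\|^4]$ itself. Assembling all the estimates produces an inequality of the form
\begin{equation*}
E[\|w^n_k\|^4] \leq K\left(\sum_{j=n}^{k-1}a(j)\right)^2\!\left(1 + \sum_{j=n}^{k-1}a(j)\,E[\|w^n_j\|^4]\right),
\end{equation*}
to which the discrete Gronwall inequality yields a uniform bound on $\{E[\|w^n_k\|^4]\}$ with a constant depending only on $T$. Re-substituting this uniform bound, restricted to $k \leq j < \ell$, delivers the desired $O\bigl((\sum_{j=k}^{\ell-1} a(j))^2\bigr)$ rate, and the second equality in the lemma then follows from $\sum_{j=k}^{\ell-1} a(j) = t^f(\ell) - t^f(k)$.

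The main obstacle I anticipate is the summation-by-parts step for the $\tau^s$-telescope: the rewritten iteration carries a prefactor of $a(j)\epsilon_j = b(j)$ rather than the pure slow step size with index-shifted neighbors, so the algebraic mismatch differs from the analogous expression in Lemma \ref{lem: slow_tightness}, and one must verify afresh that it vanishes uniformly in $n$ without exploiting $\varphi = 0$. The fact that Assumption \ref{assumption: alpha_beta} guarantees the existence of $\vartheta$ separately from $\varphi$ is precisely what should make this go through, and that careful verification is where I expect the bulk of the genuine work to lie.
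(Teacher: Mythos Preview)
Your proposal is correct and follows essentially the same approach as the paper, which simply notes that since $\sqrt{a(j)}\,\epsilon_j = b(j)/\sqrt{a(j)}$ the terms coincide with those of Lemma~\ref{lem: slow_tightness}, and then invokes $b(k)\le a(k)$ and $\sum_{j=k}^{\ell} b(j) \le \sum_{j=k}^{\ell} a(j) \le T$ to transfer the bounds verbatim. One remark: the obstacle you anticipate in the $\tau^s$ summation-by-parts is illusory for the \emph{tightness} estimate, since the bound in the proof of Lemma~\ref{lem: slow_tightness} uses only the boundedness of $\tau^s_j$ and $\nabla\of$ together with \ref{assumption: alpha_beta} to control the coefficients, never the vanishing of the mismatch term (that vanishing is needed only later, when characterizing the limit), so no separate verification involving $\varphi$ or $\vartheta$ is required here.
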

\begin{proof}[Proof of Lemma \ref{lem: slow_tightness_alpha}.]
The proof of this lemma is similar to that of Lemma \ref{lem: slow_tightness}. For $n \leq k < \ell \leq m^f(n)$, as $\sum_{j=k}^{\ell} b(k) \leq \sum_{j=k}^{\ell} a(k) \leq T$ and $b(k) \leq a(k)$, the same arguments as in Lemma \ref{lem: slow_tightness} apply. 
Tightness follows from Lemma \ref{lem: Lemma7.4}.
\end{proof}

\noindent\textbf{Characterizing the limit.}
Using the Taylor expansion for $\og$, 
\begin{align*}
	p(x_j, y_j) &= \og(x_j,y_j) - \og(\lambda(y_j),y_j)  \\
	&= \nabla_x \og(\lambda(y_j), y_j) \left(x_j - \lambda(y_j) \right) + e^{s,T,p}_j,
\end{align*}
where $e^{s,T,p}_j = o(\| x_j - \lambda(y_j)\| )$. Recall 
\begin{align*}
	w^n_{j+1} &= w^n_j \sqrt{\frac{a(j)}{a(j+1)}} + a(j) \epsilon_j \nabla \of(y^n(t^f(j)))  \sqrt{\frac{a(j)}{a(j+1)}} \\&+ \sqrt{a(j)} \epsilon_j\left(e^{s,T}_j+ \delta^y_j +  p(x_j, y_j) +  \Delta^s_j(x_j) + M^s_{j+1}  \right)\sqrt{\frac{a(j)}{a(j+1)}} \\
	&= w^n_j \sqrt{\frac{a(j)}{a(j+1)}} + a(j) \epsilon_j \nabla \of(y^n(t^f(j)))  \sqrt{\frac{a(j)}{a(j+1)}} \\&+ \sqrt{a(j)} \epsilon_j\left( \sqrt{a(j)} \nabla_x \og(\lambda(y_j), y_j) u_j+  \Delta^s_j(x_j) + M^s_{j+1}  \right)\sqrt{\frac{a(j)}{a(j+1)}} + o(a(j)).
\end{align*}
Iterating the above equation for $n \leq j<m^s(n)$ gives:
\begin{multline}
	w_{j+1}^n = w_n^n + \sum_{k=n}^j \left( \sqrt{\frac{a(k)}{a(k+1)}} -1 \right) w_k^n + \sum_{k=n}^{j}  a(k) \epsilon_k \nabla \of(y^n(t^s(k))) \sqrt{\frac{a(k)}{a(k+1)}} w_k^n \\
	+ \sum_{k=n}^{j}\frac{a(k)\epsilon_k}{\sqrt{a(k+1)}} \Delta^s_k(x_k)+ \sum_{k=n}^{j} \sqrt{a(k)} \sqrt{\frac{a(k)}{a(k+1)}} \epsilon_k M_{k+1}^s \\+ \sum_{k=n}^j a(k)\epsilon_k \sqrt{\frac{a(k)}{a(k+1)}} {\nabla_x \og(\lambda(y_k), y_k) u_k} + o(1). 
\end{multline}
Using the decomposition for $\Delta^s_k(x_k)$ from \eqref{eqn: delta_decomp_slow}, 
\begin{align*}
	&\sum_{k=n}^j \frac{a(k)\epsilon_k}{\sqrt{a(k+1)}}\Delta^s_k(x_k) = \sum_{k=n}^j \sqrt{a(k)}\frac{\sqrt{a(k)}}{\sqrt{a(k+1)}} \epsilon_k \zeta^s_k(x_k) \\&\hspace{8em}+\sum_{k=n}^j \frac{a(k)\epsilon_k}{\sqrt{a(k+1)}} \left(\tau^s_k - \tau^s_{k+1}\right) + \sum_{k=n}^j a(k) \sqrt{\frac{a(k)}{a(k+1)}}\epsilon_k  \frac{e^{s,\Delta}_k(x_k)}{\sqrt{a(k)}}.  
\end{align*}
Define
\begin{align}
	c(t) &= \sqrt{\frac{a(i)}{a(i+1)}}, t \in [ t^s(i), t^s(i+1) ), i \geq 0, \\
	d_m^n &= \sum_{i=n}^m \left( \sqrt{\frac{a(i)}{a(i+1)}} -1 \right) w_i^n = \sum_{i=n}^m \left( \frac{\varphi}{2} + o(1) \right) w_i^n . 
\end{align}
As shown in Section \ref{sec: limit_slow_martingale},
\begin{align*}
	E \left[ \left \| \sum_{k=n}^j b(k)\sqrt{\frac{a(k)}{a(k+1)}}\frac{\left(\tau^s_k - \tau^s_{k+1}\right)}{\sqrt{a(k)}} \right \|^2 \right] &\overset{n \uparrow \infty}{\rightarrow} 0, \\
		E \left[ \left \| \sum_{k=n}^j b(k)\sqrt{\frac{a(k)}{a(k+1)}}\frac{e^{s,\Delta}_k(x_k)}{\sqrt{a(k)}} \right \|^2 \right] &\overset{n \uparrow \infty}{\rightarrow} 0, \\
\end{align*}
Also, using Jensen's inequality, 
\begin{align*}
	E \left[\left \| \sum_{k=n}^{j} a(k) \sqrt{\frac{a(k)}{a(k+1)}} \frac{b(k)}{a(k)^{3/2}} M_{k+1}^s  \right \|^2 \right] &\leq \newuck \left(\sum_{k=n}^{j} a(k)\right) \sum_{k=n}^{j} a(k) \left(\frac{b(k)}{a(k)^{3/2}} \right)^2 E \left[\|M^s_{k+1} \|^2 \right] \\
	&\leq \newuck \underset{k \geq n}{\sup} \ \left(\frac{b(k)}{a(k)^{3/2}} \right)^2 \rightarrow 0. 
\end{align*}
Similarly,
\begin{align*}
	E \left[\left \| \sum_{k=n}^{j} a(k) \sqrt{\frac{a(k)}{a(k+1)}} \frac{b(k)}{a(k)^{3/2}} \zeta_{k}^s  \right \|^2 \right] &  \rightarrow 0. 
\end{align*}
Also,
\begin{align*}
	E \left[\left \| \sum_{k=n}^j a(k)\epsilon_k \sqrt{\frac{a(k)}{a(k+1)}} {\nabla_x \og(\lambda(y_k), y_k) u_k} \right \|^2 \right] &\leq \newuck \left(\sum_{k=n}^{j} a(k)\right) \sum_{k=n}^{j} a(k) \epsilon_k^2 E \left[\|u_k \|^2 \right] \\
	&\leq \newuck \underset{k \geq n}{\sup}\  \epsilon_k^2  \rightarrow 0,
\end{align*}
using the result from Section \ref{sec: un_bound} that $\{E \left[\| u_n \|^2 \right]\}$ is a bounded sequence. Similarly,
\begin{align*}
	E \left[\left \| \sum_{k=n}^j a(k)\epsilon_k \sqrt{\frac{a(k)}{a(k+1)}} {\nabla \of(y^n(t^f(k))) u_k} \right \|^2 \right] &\leq \newuck \ \underset{k \geq n}{\sup}\  \epsilon_k^2  \rightarrow 0.
\end{align*}

Thus, the resultant limit in law is the following ordinary differential equation: For $t \in [0,T]$,
\begin{align*}
	w^*(t) = \int_{0}^{t} \frac{\varphi}{2} w^*(t) dt.
\end{align*}
\end{proof}

\bibliography{references.bib} 

\begin{thebibliography}{29}
\providecommand{\natexlab}[1]{#1}
\providecommand{\url}[1]{\texttt{#1}}
\expandafter\ifx\csname urlstyle\endcsname\relax
  \providecommand{\doi}[1]{doi: #1}\else
  \providecommand{\doi}{doi: \begingroup \urlstyle{rm}\Url}\fi

\bibitem[Robbins and Monro(1951)]{robbins_munro}
Herbert Robbins and Sutton Monro.
\newblock A stochastic approximation method.
\newblock \emph{The Annals of Mathematical Statistics}, 22\penalty0
  (3):\penalty0 400 -- 407, 1951.
\newblock \doi{10.1214/aoms/1177729586}.

\bibitem[Derevitskii and Fradkov(1974)]{DerFrad}
D.~Derevitskii and A.L. Fradkov.
\newblock Two models analyzing the dynamics of adaptive algorithms.
\newblock \emph{Automation and Remote Control}, 35:\penalty0 59--67, 1974.

\bibitem[Ljung(1977)]{Ljung}
L.~Ljung.
\newblock Analysis of recursive stochastic algorithms.
\newblock \emph{IEEE Transactions on Automatic Control}, 22:\penalty0 551--575,
  1977.

\bibitem[Meerkov(1972)]{Meerkov}
S.\~M. Meerkov.
\newblock Simplified desrcription of a slow random walks. ii.
\newblock \emph{Automation and Remote Control}, 33:\penalty0 403--414, 1972.

\bibitem[Benaim(1996)]{benaim1996}
Michel Benaim.
\newblock A dynamical system approach to stochastic approximations.
\newblock \emph{SIAM Journal on Control and Optimization}, 34\penalty0
  (2):\penalty0 437--472, 1996.
\newblock \doi{10.1137/S0363012993253534}.
\newblock URL \url{https://doi.org/10.1137/S0363012993253534}.

\bibitem[Borkar(2022)]{Borkar2008}
Vivek~S. Borkar.
\newblock \emph{Stochastic Approximation: A Dynamical Systems Viewpoint (2nd
  ed.)}.
\newblock Hindustan Book Agency and Springer Nature, 2022.
\newblock ISBN 978-93-86279-38-5.

\bibitem[Doan et~al.(2018)Doan, Maguluri, and
  Romberg]{distributed_optimization1}
Thinh~T. Doan, Siva~Theja Maguluri, and Justin Romberg.
\newblock Distributed stochastic approximation for solving network optimization
  problems under random quantization, 2018.
\newblock URL \url{https://arxiv.org/abs/1810.11568}.

\bibitem[Borkar and Meyn(2000)]{borkar_meyn_SA_RL}
V.~S. Borkar and S.~P. Meyn.
\newblock The o.d.e. method for convergence of stochastic approximation and
  reinforcement learning.
\newblock \emph{SIAM Journal on Control and Optimization}, 38\penalty0
  (2):\penalty0 447--469, 2000.
\newblock \doi{10.1137/S0363012997331639}.
\newblock URL \url{https://doi.org/10.1137/S0363012997331639}.

\bibitem[Konda(2002)]{konda_phd}
Vijay Konda.
\newblock Actor-critic algorithms.
\newblock \emph{Ph.D. dissertation}, 2002.

\bibitem[Gerencs{\'e}r(1992)]{gerencser}
L{\'a}szl{\'o} Gerencs{\'e}r.
\newblock Rate of convergence of recursive estimators.
\newblock \emph{SIAM Journal on Control and Optimization}, 30\penalty0
  (5):\penalty0 1200--1227, 1992.
\newblock \doi{10.1137/0330064}.

\bibitem[Pelletier(1999)]{pelletier1999}
Mariane Pelletier.
\newblock An almost sure central limit theorem for stochastic approximation
  algorithms.
\newblock \emph{J. Multivar. Anal.}, 71\penalty0 (1):\penalty0 76–93, 10
  1999.
\newblock ISSN 0047-259X.
\newblock \doi{10.1006/jmva.1999.1830}.

\bibitem[Pelletier(1998)]{pelletier1998}
Mariane Pelletier.
\newblock On the almost sure asymptotic behaviour of stochastic algorithms.
\newblock \emph{Stochastic Processes and their Applications}, 78\penalty0
  (2):\penalty0 217--244, 11 1998.
\newblock URL
  \url{https://ideas.repec.org/a/eee/spapps/v78y1998i2p217-244.html}.

\bibitem[Borkar et~al.(2021)Borkar, Chen, Devraj, Kontoyiannis, and
  Meyn]{borkar_meyn_markov}
Vivek Borkar, Shuhang Chen, Adithya Devraj, Ioannis Kontoyiannis, and Sean
  Meyn.
\newblock The ode method for asymptotic statistics in stochastic approximation
  and reinforcement learning, 2021.
\newblock URL \url{https://arxiv.org/abs/2110.14427}.

\bibitem[Duflo(2013)]{Duflo}
M.~Duflo.
\newblock Random iterative models.
\newblock \emph{Springer Science \& Business Media}, 34, 2013.

\bibitem[Lai and Robbins(1978)]{Lai_Robbins}
T.L. Lai and H.~Robbins.
\newblock Limit theorems for weighted sums and stochastic approximation
  processes.
\newblock \emph{Proceedings of the National Academy of Sciences}, 75\penalty0
  (5):\penalty0 1068--1070, 1978.

\bibitem[Solo(1982)]{solo}
V.~Solo.
\newblock Stochastic approximation with dependent noise.
\newblock \emph{Stochastic Processes and their Applications}, 13\penalty0
  (2):\penalty0 157--170, 1982.

\bibitem[Benveniste et~al.(1990)Benveniste, Priouret, and
  M\'{e}tivier]{benveniste_priouret_metivier}
Albert Benveniste, Pierre Priouret, and Michel M\'{e}tivier.
\newblock \emph{Adaptive Algorithms and Stochastic Approximations}.
\newblock Springer-Verlag, Berlin, Heidelberg, 1990.
\newblock ISBN 0387528946.

\bibitem[Pezeshki-Esfahani and Heunis(1997)]{esfahani_heunis}
H.~Pezeshki-Esfahani and A.J. Heunis.
\newblock Strong diffusion approximations for recursive stochastic algorithms.
\newblock \emph{IEEE Transactions on Information Theory}, 43\penalty0
  (2):\penalty0 512--523, 1997.

\bibitem[Borkar(1997)]{borkar_conv_two_ts}
V.~S. Borkar.
\newblock Stochastic approximation with two time scales.
\newblock \emph{Systems and Control Letters}, 29\penalty0 (5):\penalty0
  291--294, 1997.

\bibitem[Borkar and Pattathil(2018)]{pattathil_2018}
Vivek~S. Borkar and Sarath Pattathil.
\newblock Concentration bounds for two time scale stochastic approximation.
\newblock In \emph{2018 56th Annual Allerton Conference on Communication,
  Control, and Computing (Allerton)}, pages 504--511, 2018.
\newblock \doi{10.1109/ALLERTON.2018.8636078}.

\bibitem[Lakshminarayanan and Bhatnagar(2017)]{lakshminarayanan_stability}
Chandrashekar Lakshminarayanan and Shalabh Bhatnagar.
\newblock A stability criterion for two timescale stochastic approximation
  schemes.
\newblock \emph{Automatica}, 79:\penalty0 108--114, 2017.
\newblock ISSN 0005-1098.
\newblock \doi{https://doi.org/10.1016/j.automatica.2016.12.014}.

\bibitem[Konda and Tsitsiklis(2004)]{konda_linear}
Vijay~R. Konda and John~N. Tsitsiklis.
\newblock Convergence rate of linear two-time-scale stochastic approximation.
\newblock \emph{The Annals of Applied Probability}, 14\penalty0 (2):\penalty0
  796 -- 819, 2004.
\newblock \doi{10.1214/105051604000000116}.

\bibitem[Basak and Dasgupta(2020)]{Basak}
G.K. Basak and A.~Dasgupta.
\newblock Weak sonvergence of dynamical systems in two time scales.
\newblock \emph{Systems and Control Letters}, 142, 2020.

\bibitem[Mokkadem and Pelletier(2006)]{mokkadem_pelletier}
Abdelkader Mokkadem and Mariane Pelletier.
\newblock Convergence rate and averaging of nonlinear two-time-scale stochastic
  approximation algorithms.
\newblock \emph{The Annals of Applied Probability}, 16\penalty0 (3):\penalty0
  1671 -- 1702, 2006.
\newblock \doi{10.1214/105051606000000448}.

\bibitem[Li et~al.(2023)Li, Yang, Liang, Zhang, and Jordan]{jordan_fclt}
Xiang Li, Wenhao Yang, Jiadong Liang, Zhihua Zhang, and Michael~I. Jordan.
\newblock A statistical analysis of polyak-ruppert averaged q-learning.
\newblock In Francisco Ruiz, Jennifer Dy, and Jan-Willem van~de Meent, editors,
  \emph{Proceedings of The 26th International Conference on Artificial
  Intelligence and Statistics}, volume 206 of \emph{Proceedings of Machine
  Learning Research}, pages 2207--2261. PMLR, 25--27 Apr 2023.
\newblock URL \url{https://proceedings.mlr.press/v206/li23b.html}.

\bibitem[Stroock and Varadhan(1997)]{SV}
D.W. Stroock and S.R.S. Varadhan.
\newblock \emph{Multidimensional Diffusion Processes}.
\newblock Grundlehren der mathematischen Wissenschaften. Springer Berlin
  Heidelberg, 1997.
\newblock ISBN 9783540903536.
\newblock URL \url{https://books.google.co.in/books?id=DuDsmoyqCy4C}.

\bibitem[Borkar(1991)]{borkar1991topics}
VS~Borkar.
\newblock Topics in controlled {M}arkov chains. {P}itman {R}esearch {N}otes in
  {M}athematics series \# 240, 1991.

\bibitem[Billingsley(1968)]{billingsley1968convergence}
P.~Billingsley.
\newblock \emph{Convergence of Probability Measures}.
\newblock Wiley Series in Probability and Mathematical Statistics. Wiley, 1968.
\newblock ISBN 9780471072423.
\newblock URL \url{https://books.google.co.in/books?id=O9oQAQAAIAAJ}.

\bibitem[Karatzas and Shreve(1998)]{karatzas_shreve_martingale_representation}
Ioannis Karatzas and Steven~E. Shreve.
\newblock \emph{Brownian Motion and Stochastic Calculus}.
\newblock Springer New York, NY, 1998.
\newblock \doi{10.1007/978-1-4612-0949-2}.

\end{thebibliography}

%

%
%






%
%
%

\end{document}